\newcommand{\norm}[1]{\left\lVert#1\right\rVert}
\newcommand{\cf}{{\cal F}}
\newcommand{\cg}{{\cal G}}
\newcommand{\cs}{{\cal S}}
\newcommand{\cx}{{\cal X}}
\newcommand{\cw}{{\cal W}}
\newcommand{\ints}{\mathbb{Z}}
\newcommand{\bsa}{\boldsymbol{a}}
\newcommand{\bsb}{\boldsymbol{b}}
\newcommand{\bsu}{\boldsymbol{u}}
\newcommand{\bsx}{\boldsymbol{x}}
\newcommand{\bsy}{\boldsymbol{y}}
\newcommand{\bsz}{\boldsymbol{z}}
\newcommand{\natu}{\mathbb{N}}
\newcommand{\real}{\mathbb{R}}
\newcommand{\rd}{\mathrm{\, d}}
\newcommand{\one}{{\mathbf{1}}}
\newcommand{\bzero}{{\mathbf{0}}}
\newcommand{\var}{{\mathrm{Var}}}
\newcommand{\vol}{{\mathbf{vol}}}
\newcommand{\cc}{{\mathcal C}}
\renewcommand{\emptyset}{\varnothing}
\renewcommand{\ge}{\geqslant}
\renewcommand{\le}{\leqslant}
\newcommand{\dustd}{\mathbf{U}} 
  \newcommand{\BIT}{\begin{itemize}}
\newcommand{\EIT}{\end{itemize}}
\newcommand{\BNUM}{\begin{enumerate}}
\newcommand{\ENUM}{\end{enumerate}}
\renewcommand{\exp}[1]{\operatorname{exp}\left(#1\right)} 
\def\E{\mathbb{E}} 
\def\P{\mathbb{P}} 
\theoremstyle{definition}
\newtheorem{definition}{Definition}
\newtheorem{example}{Example}
\theoremstyle{plain}
\newtheorem{theorem}{Theorem}
\newtheorem{corollary}{Corollary}
\newtheorem{propo}{Proposition}
\newtheorem{lemma}{Lemma}
\theoremstyle{definition}
\newcommand{\bx}{\mathbb{X}}
\newcommand{\diam}{\mathrm{diam}}
\newcommand{\sumdot}{\text{\tiny$\bullet$}}
\newcommand{\bsk}{\boldsymbol{k}}
\newcommand{\bsn}{\boldsymbol{n}}
\newcommand{\bsw}{\boldsymbol{w}}
\newcommand{\bsv}{\boldsymbol{v}}
\DeclarePairedDelimiter{\floor}{\lfloor}{\rfloor}
\newcommand{\otos}{{1{:}s}}
\theoremstyle{definition}
\newtheorem{remark}{Remark}
\begin{document}

\begin{frontmatter}

\title{Asymptotic Normality of Scrambled Geometric Net Quadrature}
\runtitle{Scrambled Geometric Net Quadrature}


\begin{aug}
\author{\fnms{Kinjal} \snm{Basu}\thanksref{t1}\ead[label=e1]{kinjal@stanford.edu}}
\and
\author{\fnms{Rajarshi} \snm{Mukherjee}\ead[label=e2]{rmukherj@stanford.edu}}

\thankstext{t1}{Supported by NSF Grant DMS-1407397}
\runauthor{K. Basu \and R. Mukherjee}

\affiliation{Stanford University}

\address{Department of Statistics\\
350 Serra Mall, Stanford CA 94305\\
\printead{e1}\\
\phantom{E-mail:\ }\printead*{e2}}

\end{aug}




\begin{abstract}
In a very recent work, Basu and Owen \cite{basu2015scrambled} propose the use of scrambled geometric nets in numerical integration when the domain is a product of $s$ arbitrary spaces of dimension $d$ having a certain partitioning constraint. It was shown that for a class of smooth functions, the integral estimate has variance $O( n^{-1 -2/d} (\log n)^{s-1})$ for scrambled geometric nets, compared to $O(n^{-1})$ for ordinary Monte Carlo. 
The main idea of this paper is to develop on the work by Loh \cite{loh:2003}, to show that the scrambled geometric net estimate has an asymptotic normal distribution for certain smooth functions defined on products of suitable subsets of $\real^d$.
\end{abstract}

\begin{keyword}[class=MSC]
\kwd[Primary ]{62E20}
\kwd[; secondary ]{62D05}
\kwd{65D30}
\end{keyword}

\begin{keyword}
\kwd{Asymptotic normality}
\kwd{numerical integration}
\kwd{quasi-Monte Carlo}
\kwd{scrambled geometric net}
\kwd{Stein's method}
\end{keyword}

\end{frontmatter}

\section{Introduction}
Quasi-Monte Carlo (QMC) sampling has been well developed for the purposes of integration over the unit cube $[0,1]^s$. Sampling over other regular shapes is a much more challenging problem that is receiving a lot more focus in the recent era. Measure preserving mapping from the unit cube to such shapes work very well for plain Monte Carlo. Unfortunately, the composition of the integrand with the mapping may fail to have even mild smoothness properties that QMC exploits \cite{basu:owen:2016}.

In this paper we consider the QMC integration via scrambled geometric nets as introduced in Basu and Owen \cite{basu2015scrambled}. The domains of interest are product spaces of the form $\cx^\otos := \prod_{j=1}^s\cx^{(j)}$ where each $\cx^{(j)}$ is a ``nice" bounded set in dimension $d$ (defined in Section \ref{sec:Fsmooth}). Integration over sets like triangles, spherical triangles, spheres, and disks are important in graphical rendering \citep{carlo2001state}. For instance, when $\cx^{(j)}$ is a triangle for  $j=1,2$, an integral of the
form $\int_{(T^2)^2}f(\bsx_1,\bsx_2)\rd \bsx_1\rd \bsx_2$ describes the potential
for light to leave one triangle and reach another. The function $f$
incorporates the shapes and relative positions of these triangles as
well as whatever lies between them. 

In \cite{basu2015scrambled}, the authors show that if each $\cx^{(j)}$ is a ``nice" bounded set in dimension $d$, then we can estimate
\[
\mu = \frac{1}{\vol( \cx^{\otos})} \int_{\cx^{\otos}} f(\bsx) d \bsx
\]
by the equal weight rule
\begin{align}
\label{mu_hat}
\hat{\mu} = \frac{1}{n} \sum_{i=1}^n f(\bsx_i),
\end{align}
where $\bsx_i$ are the points of a scrambled geometric net. In particular, \cite{basu2015scrambled} shows that $\hat{\mu}$ is unbiased for $\mu$ (see Proposition \ref{prop_unif_T}) and there exists a finite constant $C>0$ (depending on $f$) such that
\begin{align}
\label{up_bound}
\var(\hat{\mu}) \le   C\frac{(\log n)^{s-1}}{n^{1+2/d}},
\end{align}
under certain smoothness conditions on $f$ and a sphericity constraint on the partitioning of $\cx^\otos$.
This generalized the concept of scrambled nets which was introduced in a sequence of papers by Owen \cite{rtms, owensinum, smoovar, snxs, owen2008local}.

Although, this provides an upper bound on the mean squared error of $\hat{\mu}$, it is often of interest to obtain more precise results. In particular, one might seek to obtain asymptotically valid confidence interval type guarantees, as is the case with usual Monte Carlo integration. However, a simple variance upper bound guarantee as \eqref{up_bound} is not sufficient for such purposes. To obtain central limit theorem type results, it is crucial to obtain an asymptotically matching lower bound to \eqref{up_bound}, and thereafter, with this appropriate scaling, one can proceed to invoke standard tools to prove distributional convergence. This routine has been successfully realized in \cite{loh:2003}, where the author studied the asymptotic distribution of the scrambled net estimator over $[0,1]^s$. The proof relied on ensuring a suitable lower bound on the variance of the candidate $\hat{\mu}$ (matching up to constants to the upper bound obtained in \cite{smoovar}) and thereafter invoking the exchangeable pair argument of Stein's Method \citep{barbour2005introduction} to prove a central limit theorem for $\hat{\mu}$.

The main contribution of this paper is two fold. First, for a class of product spaces, of which $(T^2)^s$ is a special case, we show that the lower bound on $\var(\hat{\mu})$ matches the upper bound \eqref{up_bound} if $f$ satisfy certain smoothness assumptions. Secondly, we establish asymptotic normality of scrambled geometric net estimators on very general product spaces, for smooth functions where the lower bound to \eqref{up_bound} holds.  The main idea to prove the asymptotic normality of the scrambled geometric nets is based on the idea of Loh \cite{loh:2003}. 
Assuming that there exists a matching lower bound we show that we can generalize the proof of asymptotic normality from scrambled nets on $[0,1]^s$ to scrambled geometric nets on $\cx^\otos$. Loh states that the matching lower bound for scrambled nets can be obtained from the results in Owen \cite{smoovar}, however the same is not true for scrambled geometric nets. The lower bound does not follow from the main result in \cite{basu2015scrambled}. We show that, for a certain class of functions $\mathcal{F}_s$ (defined in Section \ref{sec:Fsmooth}), the lower bound on $\var(\hat\mu)$ matches the upper bound. That is, there exists a constant $c > 0$  (depending on $f$) such that for all $f \in \mathcal{F}_s$ and large enough $n$,
\begin{align}
\label{main_thm}
\var(\hat\mu) \ge c \frac{(\log n)^{s-1}}{n^{1 + 2/d}}.
\end{align}
For $s = 1$ and $\cx = T^2$, we present a separate proof of the lower bound on variance for a different class of functions, which might be of independent interest.

The rest of the paper is organized as follows.  Assuming familiarity with scrambled geometric nets, in Section \ref{sec:main_results}, we state the main results (asymptotic lower bound on variance and central limit theorem for $\hat{\mu}$) of the paper, followed by some numerical simulations in Section \ref{sec:num_results} to empirically validate the theoretical results.  For the sake of completeness, subsequently in Section \ref{sec:background} we discuss relevant backgrounds on geometric nets and scrambled geometric nets. Section \ref{sec:mrotos} contains discussions on ANOVA and multiresolution analysis on $\cx^\otos$ along with the form of $\var(\hat{\mu})$. Sections \ref{section:theorem1}, \ref{sec:proof_cor} and \ref{section:theorem2} are devoted to proving the main theorems and corollaries pertaining to the lower bound on $\var(\hat{\mu})$ and asymptotic normality of $\hat{\mu}$. Finally we collect proofs of several required lemmas in Appendices  \ref{app_lb_gain}, \ref{app:lemmas}, and \ref{app:lower_bound_triangles}.


We conclude this section with some related work on similar spaces. Previous work on using QMC for integration over simplices was due to Pillards and Cools, \cite{pillards2004theoretical, pill:cool:2005}. Instead of using transformations Basu and Owen \cite{Basu2014} developed two low-discrepancy point sets on the triangle. One is a lattice like construction which attained a discrepancy of $O(\log n/n)$. The other is a generalization of the van der Corput sequence by using the new theory of Koksma-Hwalka inequality on simplices \cite{bran:colz:giga:trav:2013}. Tractability results have been obtained in \cite{qmc:trac:simplices} for the $s$-fold product of the simplex $T^d =\{\bsx\in[0,1]^{d}\mid\sum_jx_j\le1\}$. For a survey of randomized QMC (RQMC) in general, see \cite{lecu:lemi:2002}. For an outline of QMC for computer graphics, see \cite{kell:2013}. 

\section{Main Results}
\label{sec:main_results}
In this section we state and discuss our main results pertaining to asymptotic normality of the scrambled geometric net estimator $\hat{\mu}$, as defined in \eqref{mu_hat}, with $\{\bsx_i : i = 1, \ldots, b^m\}$ being a scrambled $(0,m,s)$ geometric net in base $b$ (for a background on scrambled geometric nets refer to Section \ref{sec:background}). Assuming $\var(\hat{\mu}) := \sigma_{sgn}^2 > 0$, define,
\begin{align}
\label{Wdef}
W = \frac{\hat{\mu} - \mu}{\sigma_{sgn}}.
\end{align}
To avoid trivialities such as constant functions (which renders an identically zero variance) and rough functions (for which even an upper bound on the variance is unknown), we need to make certain assumptions on the class of functions $f$. Indeed, we shall show that if $f$ defined on $\cx^{1:s}$ belongs to a class of ``smooth functions" $\mathcal{F}_s$, then $\sigma_{sgn}$ satisfies a matching lower bound to \eqref{up_bound} and $W$ is asymptotically normal. 
\subsection{Smooth functions on $\cx^{\otos}$}
\label{sec:Fsmooth}
The results on scrambled nets and scrambled geometric nets are highly dependent on the smoothness properties of the function. Let $u \subseteq 1:s$, then denote $\partial^u f$ as the mixed partial derivative of $f$ taken once with respect to each $x_j$ for $j \in u$.

In \cite{owen2008local}, Owen defined a function $f$ on $[0,1]^s$ to be smooth if $\partial^u f(\bsx)$ is continuous on $[0,1]^s$ for all $u \subseteq 1:s$. Basu and Owen \cite{basu2015scrambled} generalize the same smoothness definition in \cite{owen2008local} for their results on scrambled geometric nets. The smoothness conditions depend on the underlying domain $\cx^\otos$. If the space is Sobol' extensible then the function $f$ is said to be smooth if $\partial^{1:ds}f$ is continuous on $\cx^\otos$. If the domain is not Sobol' extensible the authors assume that $f \in C^{ds}(\cx^{\otos})$. Although, these mild conditions are enough to show the upper bound of $\sigma_{sgn}^2$, they might not be enough to show a matching lower bound as per Theorem \ref{thm:lower-bound}. For details on Sobol' extensible sets, see \cite{basu2015scrambled}. Also, for the proof of upper bound of $\sigma_{sgn}^2$, Basu and Owen \cite{basu2015scrambled} assume a sphericity constraint for the construction of the scrambled geometric net. See Section \ref{sec:background} for more details.

In view of the above discussion, throughout the rest of the paper we assume $\cx^\otos$ to be a Sobol' extensible region, with a recursive partition satisfying the sphericity constraint.  We define a sub-class of smooth functions $\mathcal{F}_s$ on $\cx^\otos$ as follows. For any $u \subseteq \otos$, we denote the order $u$ gradient of $f$ as a $d^{|u|}$ dimensional vector, $\nabla^u f(\bsx)$. Formally, the coordinates of $\nabla^u f(\bsx)$ are $\frac{\partial^{|u|}f (\bsx) }{\prod_{j \in u} \partial x_{ji_j}}$ where $i_j \in \{1, \ldots, d\}$ for each $j$, stacked in some prefixed lexicographic order. A smooth function is a function $f$ such that mixed partial gradient satisfies the following Lipschitz condition. 

\begin{definition}
\label{smoodef}
Let $\cx^\otos$ be Sobol' extensible. A real-valued function $f$ on $\cx^{\otos}$ is smooth if for all $u \subseteq \otos$,
$$\norm{\nabla^u f(\bsx) - \nabla^u f(\bsx^*)} \le B \norm{ \bsx - \bsx^*}^\beta$$ for some finite $B \ge 0$ and $\beta \in (0,1]$ for all $\bsx, \bsx^* \in \cx^\otos$.
\end{definition}

\begin{definition}\label{smoothf_def}
Let $\cx^\otos$ be Sobol' extensible. Define $\mathcal{F}_s$ as the class of all smooth functions $f$ on $\cx^\otos$ such that for all $u \subseteq \otos$,
\begin{align*}
 \norm{ \int_{\cx^\otos} \nabla^u f (\bsx) \rd \bsx }^2  > 0,
\end{align*}
where the above integral is done coordinate-wise.
\end{definition}

\begin{remark}
	Under additional smoothness assumptions, our results hold even when the domain is not Sobol' extensible. However, for sake of compactness of proof we only work with Sobol' extensible sets. 
\end{remark}

 \subsection{Lower Bound on Variance}
 As mentioned earlier, a crucial step towards understanding the asymptotic distribution of $W$ is obtaining a matching lower bound to \eqref{up_bound}. To state the result we need the following notation. Let $\bsn_{c_j}$ and $\bsw_{j}$ denotes the center of $\cx_{j,(k_j,t_j,c_j)}$ and $\cx_{j,(k_j,t_j)}$ which satisfy,
\begin{align}
\label{center_property}
\int_{\cx_{j,(k_j,t_j)}} \langle \bsx - \bsw_{j} , \boldsymbol{\delta} \rangle \rd \bsx = 0, \qquad \int_{\cx_{j,(k_j,t_j,c_j)}} \langle \bsx - \bsn_{c_j} , \boldsymbol{\delta} \rangle \rd \bsx = 0.
\end{align}
for any fixed vector $\boldsymbol{\delta}$. Here $\cx_{j,(k_j,t_j,c_j)}$ and $\cx_{j,(k_j,t_j)}$ denotes the narrow and wide cells of $\cx^{(j)}$ as defined in Definition \ref{cell_def_2} and \eqref{cellDef}. For notational simplicity we hide the dependence on $k_j$ and $t_j$ for $\bsn_{c_j}, \bsw_j$. Finally let,
\begin{align}
\label{eq:Aj}
A_j = \sum_{c_j = 0}^{b-1} (\bsn_{c_j} - \bsw_j)(\bsn_{c_j} - \bsw_j)^{T}, 
\end{align}
and $\lambda_1(A_j)$ be the minimum eigenvalue of $A_j$.
With this, we are ready to state our first main result of the paper regarding the matching lower bound on variance. 

\begin{theorem}
\label{thm:lower-bound}
If $f \in \mathcal{F}_s$ and $\lambda_1(A_j) \ge \tilde{c}b^{-2k_j/d}$ for all $j = 1, \ldots, s$, $\tilde{c} > 0$, then there exists a positive constant $c$ such that
\begin{align}
\label{eq:sigma_sgn}
\sigma_{sgn}^2 \ge c \frac{(\log n)^{s-1}}{n^{1+2/d}}
\end{align}
for all sufficiently large $m$.
\end{theorem}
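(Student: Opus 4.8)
The plan is to lower-bound $\sigma_{sgn}^2$ by isolating a single term in the ANOVA/multiresolution decomposition of $\var(\hat\mu)$ and showing it already has the desired order. Recall from Section~\ref{sec:mrotos} that scrambling induces an orthogonal decomposition of $f-\mu$ indexed by wavelet-type basis functions attached to the recursive partition cells, and correspondingly $\sigma_{sgn}^2 = \sum_{\text{multiresolution coefficients}} (\text{gain factor}) \cdot \|\hat f_{\cdot}\|^2$, where every term is nonnegative. Since a sum of nonnegative terms is at least any one of them, it suffices to retain the contribution of the ``finest'' level compatible with a $(0,m,s)$ net, namely cells at resolution $k_j$ in coordinate $j$ with $\sum_j k_j$ roughly $m$; summing over the $\binom{m+s-1}{s-1} \asymp (\log n)^{s-1}$ ways of splitting $m$ across the $s$ factors is exactly what will produce the $(\log n)^{s-1}$ factor, provided each individual cell-term is bounded below by a constant multiple of $b^{-m(1+2/d)} = n^{-(1+2/d)}$.

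The key steps, in order. First, write out $\sigma_{sgn}^2$ in the multiresolution form and discard all but the level-$\bsk$ terms with $|\bsk|$ in a suitable window near $m$, keeping nonnegativity in mind so the inequality goes the right way. Second, for a fixed cell at that level, expand $f$ around the cell center using the smoothness hypothesis from Definition~\ref{smoodef}: the value of $f$ minus its affine part is $O(B\,(\text{diam})^{1+\beta})$, so the dominant contribution to the relevant wavelet coefficient comes from the \emph{linear} term $\langle \nabla^u f(\text{center}), \bsx - \text{center}\rangle$, while the Lipschitz remainder is a genuinely lower-order correction because the cell diameter is $\asymp b^{-k_j/d}$. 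Third, evaluate the linear-term contribution: using the center properties \eqref{center_property} and the definition \eqref{eq:Aj} of $A_j$, the cell's squared coefficient is a quadratic form $\nabla^u f(\text{center})^{\!\tran} A_j \nabla^u f(\text{center})$ (times the appropriate volume/gain factors), which the eigenvalue hypothesis $\lambda_1(A_j) \ge \tilde c\, b^{-2k_j/d}$ bounds below by $\tilde c\, b^{-2k_j/d}\,\|\nabla^u f(\text{center})\|^2$. Fourth, sum over cells at level $\bsk$: by a Riemann-sum argument the sum of $\|\nabla^u f(\text{center})\|^2$ over cells converges, and because $f\in\mathcal{F}_s$ we have $\|\int \nabla^u f\|^2 > 0$, so Jensen (or Cauchy–Schwarz) gives a strictly positive constant lower bound uniformly in $\bsk$ once $m$ is large. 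Fifth, multiply by the gain coefficients, track the powers of $b$ (the $b^{-2k_j/d}$ from the eigenvalue bound combines with the $b^{-k_j}$-type volume factors to give $b^{-k_j(1+2/d)}$ per coordinate, hence $b^{-m(1+2/d)}$ after summing $\sum k_j = m$), and count the $\asymp m^{s-1} \asymp (\log n)^{s-1}$ index tuples.

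The main obstacle is Step four together with the interplay of Steps two and three: one must show that after summing the quadratic-form lower bounds over all cells at a given level, the Lipschitz remainder from the non-affine part of $f$ does not swamp the main term, and that the resulting constant is bounded \emph{away from zero uniformly over all levels $\bsk$ in the admissible window} (not merely positive for each fixed $\bsk$), so that summing over the $(\log n)^{s-1}$ tuples genuinely multiplies a fixed positive constant. This requires care in choosing the window of $\bsk$'s (one cannot go all the way to $k_j$ comparable to $m$ in a single coordinate, since then the Riemann sum in the other coordinates degenerates), and in verifying that the $\mathcal{F}_s$ condition $\|\int \nabla^u f\|^2 > 0$ propagates to a lower bound on $\sum_{\text{cells}} \|\nabla^u f(\text{center})\|^2$ with a constant independent of how fine the partition is — which is where continuity of $\nabla^u f$ and the bounded geometry of the cells (the sphericity constraint) enter. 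Everything else — the multiresolution bookkeeping, the diameter estimates, the gain-factor powers — is routine given the machinery of \cite{basu2015scrambled} recalled in Sections~\ref{sec:background}–\ref{sec:mrotos}.
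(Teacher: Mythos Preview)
Your proposal is correct and follows essentially the same route as the paper: lower-bound the gain coefficients (Lemma~\ref{lem:lower_bound_gamma}), expand $\sigma_{u,\kappa}^2$ via a Taylor/multiresolution computation (Lemmas~\ref{lem:inn_prod_s}--\ref{sigmauk}) to obtain the quadratic form in $\tilde A_u=\bigotimes_{j\in u}A_j$, apply the eigenvalue hypothesis together with the $\mathcal{F}_s$ condition, restrict to a window $\min_j k_j\ge a_m$ with $a_m\to\infty$ slowly so the H\"older remainder $O(b^{-2|\kappa|/d-\tilde k\beta/d})$ is dominated, and then count the $\asymp m^{s-1}$ admissible tuples. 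The one item you call routine is not: the nonnegativity and uniform positive lower bound for the gain coefficients $\Gamma_{u,\kappa}$ (needed both to drop terms and to multiply by a fixed constant) is precisely Lemma~\ref{lem:lower_bound_gamma}, which requires its own combinatorial argument and is not contained in \cite{basu2015scrambled}.
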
 
Indeed to prove appropriate lower bounds on variance by applying Theorem \ref{thm:lower-bound} above, one needs to show that $\lambda_1(A_j) \ge \tilde{c}b^{-2k_j/d}$. This is often a rather space specific property. In the following two corollaries we show this indeed holds for the cases of interval or triangle.
\begin{corollary}
\label{cor:triangle}
Let $b = 4$ and  $\cx = T^2$ and $f \in \mathcal{F}_s$ for $s \le b$. Then, there exists positive constants $c, C$ such that 
\begin{align*}
c \frac{m^{s-1}}{b^{2m}} \le \var(\hat{\mu}) \le C \frac{m^{s-1}}{b^{2m}}
\end{align*}
as $m \rightarrow \infty$.  
\end{corollary}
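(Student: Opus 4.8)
The plan is to read the upper bound off \eqref{up_bound} and the lower bound off Theorem~\ref{thm:lower-bound}; the only non-mechanical part is verifying the eigenvalue hypothesis $\lambda_1(A_j)\ge\tilde c\,b^{-2k_j/d}$ in the geometry of the triangle. Throughout, $\cx=T^2\subset\real^2$ so $d=2$, the base is $b=4$, and $n=b^m$; the recursive partition of $T^2$ is the iterated medial subdivision, in which each triangular cell is split into its four sub-triangles by joining the edge midpoints, and the hypothesis $s\le b$ guarantees that a scrambled $(0,m,s)$ geometric net in base $b$ is available for this partition.

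\emph{Upper bound.} Every cell at level $k$ of the medial subdivision is congruent to $2^{-k}T^2$ up to a Euclidean motion, so the partition has uniformly bounded aspect ratios and satisfies the sphericity constraint of \cite{basu2015scrambled}; hence \eqref{up_bound} applies. Substituting $d=2$ and $n=b^m$ (so $\log n=m\log b$ and $n^{1+2/d}=b^{2m}$) gives $\var(\hat\mu)\le C(\log b)^{s-1}m^{s-1}/b^{2m}$, which is the claimed bound after absorbing $(\log b)^{s-1}$ into the constant.

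\emph{Lower bound.} Since $f\in\mathcal F_s$, by Theorem~\ref{thm:lower-bound} it suffices to produce $\tilde c>0$ with $\lambda_1(A_j)\ge\tilde c\,b^{-2k_j/d}=\tilde c\,b^{-k_j}$ for every $j$ and every wide cell. Fix such a wide cell $\cx_{j,(k_j,t_j)}$; by the self-similarity above it equals $Q\bigl(2^{-k_j}T^2\bigr)+\bst$ for some orthogonal $Q$ and translation $\bst$, with vertices $v_1,v_2,v_3$ and centroid $\bsw_j$ (the point in \eqref{center_property}). Its four narrow children $\cx_{j,(k_j,t_j,c_j)}$ are the three corner triangles — the corner at $v_i$ being the image of the wide cell under the homothety of ratio $1/2$ centered at $v_i$ — together with the medial triangle. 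For a corner child the centroid is the homothetic image of $\bsw_j$, i.e.\ $\bsn_{c_j}=\tfrac12(v_{c_j}+\bsw_j)$, so $\bsn_{c_j}-\bsw_j=\tfrac12(v_{c_j}-\bsw_j)$; for the medial child, a triangle and its medial triangle share a centroid, so $\bsn_{c_j}-\bsw_j=\bzero$. Hence, from \eqref{eq:Aj},
\[
A_j=\sum_{c_j=0}^{3}(\bsn_{c_j}-\bsw_j)(\bsn_{c_j}-\bsw_j)^{T}=\frac14\sum_{i=1}^{3}(v_i-\bsw_j)(v_i-\bsw_j)^{T}.
\]
Writing $v_i=Q(2^{-k_j}v_i^{0})+\bst$ and $\bsw_j=Q(2^{-k_j}\bsw^{0})+\bst$ in terms of the vertices $v_i^{0}$ and centroid $\bsw^{0}$ of the base triangle $T^2=\{(x,y):x,y\ge0,\ x+y\le1\}$, we get $v_i-\bsw_j=2^{-k_j}Q(v_i^{0}-\bsw^{0})$, so $A_j=\tfrac14\,b^{-k_j}\,QMQ^{T}$ with $M=\sum_{i=1}^{3}(v_i^{0}-\bsw^{0})(v_i^{0}-\bsw^{0})^{T}$. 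A direct computation gives $M=\bigl(\begin{smallmatrix}2/3&-1/3\\-1/3&2/3\end{smallmatrix}\bigr)$, with eigenvalues $1$ and $1/3$; since orthogonal conjugation preserves the spectrum, $\lambda_1(A_j)=\tfrac14\,b^{-k_j}\lambda_1(M)=\tfrac1{12}\,b^{-k_j}$ for every $j$ and every level. Thus the hypothesis of Theorem~\ref{thm:lower-bound} holds with $\tilde c=1/12$, and that theorem yields $\sigma_{sgn}^2\ge c(\log n)^{s-1}/n^{1+2/d}=c'\,m^{s-1}/b^{2m}$. Combined with the upper bound, this proves the corollary.

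\emph{Main obstacle.} The substantive point is the uniform-in-$(k_j,t_j)$ lower bound on $\lambda_1(A_j)$: one must check that every cell of the iterated medial subdivision is, up to a Euclidean motion, a fixed scaled copy of $T^2$ — so that the self-similarity and orthogonal-invariance arguments apply at every level — and identify which of the four children is the ``vanishing'' one while confirming that the remaining three centroid differences still span $\real^2$. Everything else (matching the narrow/wide-cell notation of \eqref{center_property}--\eqref{eq:Aj} to the medial subdivision, and converting $n=b^m$, $d=2$ into the $m^{s-1}/b^{2m}$ rate) is routine.
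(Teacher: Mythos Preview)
Your proof is correct and follows the same route as the paper: verify the eigenvalue hypothesis of Theorem~\ref{thm:lower-bound} by computing $A_j$ for the medial subdivision of $T^2$, then read off the lower bound from that theorem and the upper bound from \eqref{up_bound}/\cite{basu2015scrambled}. Your computation is a bit more streamlined---using self-similarity and orthogonal invariance to handle upright and inverted cells at once, where the paper writes out the four displacement vectors $\bsn_{c_j}-\bsw_j$ separately for each orientation---and yields $\lambda_1(A_j)=b^{-k_j}/12$ rather than the paper's $b^{-k_j}/6$; the factor of two comes from your using the area-$\tfrac12$ simplex $T^2=\{x_1,x_2\ge0,\ x_1+x_2\le1\}$ while the paper normalizes to unit volume (so $r_j^2=2b^{-k_j}$), but either constant suffices for the corollary.
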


\begin{remark}
To prove the above result for general $s$, one can choose $b = 4^\ell$ such that $ b \ge s$ and follow the similar proof technique as in Section \ref{sec:proof_cor}.
\end{remark}

\begin{corollary}\label{cor:square}
Let $b \ge \max(s,2), \cx = [0,1]$ and $f \in \mathcal{F}_s$. Then, there exists positive constants $c, C$ such that 
\begin{align*}
c \frac{m^{s-1}}{b^{3m}} \le \var(\hat{\mu}) \le C \frac{m^{s-1}}{b^{3m}}
\end{align*}
as $m \rightarrow \infty$.  
\end{corollary}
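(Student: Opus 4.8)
The plan is to derive this as a direct application of Theorem \ref{thm:lower-bound} (for the lower bound) together with the upper bound \eqref{up_bound} from \cite{basu2015scrambled} (for the upper bound), once we identify the relevant quantities for the one-dimensional cube $\cx = [0,1]$, i.e. $d = 1$. In this case $n = b^m$, so $n^{1+2/d} = b^{3m}$ and $(\log n)^{s-1}$ is a constant multiple of $m^{s-1}$; hence both bounds in the statement have the correct shape provided the hypothesis $\lambda_1(A_j) \ge \tilde c\, b^{-2k_j/d} = \tilde c\, b^{-2k_j}$ of Theorem \ref{thm:lower-bound} can be verified. The condition $b \ge \max(s,2)$ is exactly what is needed for a scrambled $(0,m,s)$ geometric net in base $b$ to exist (one needs $b \ge s$) and to be nontrivial.

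First I would unwind the definitions of the narrow and wide cells for $\cx = [0,1]$: with $d = 1$, a wide cell $\cx_{j,(k_j,t_j)}$ is simply a dyadic-type subinterval of $[0,1]$ of length $b^{-k_j}$ (in base $b$), and each of its $b$ narrow sub-cells $\cx_{j,(k_j,t_j,c_j)}$, $c_j = 0,\dots,b-1$, is the subinterval of length $b^{-k_j-1}$ obtained by splitting the wide cell into $b$ congruent pieces. Because we are in one dimension, the "center" conditions \eqref{center_property} reduce (for $\boldsymbol\delta \neq 0$) to $\bsw_j$ being the midpoint of the wide cell and $\bsn_{c_j}$ being the midpoint of the $c_j$-th narrow cell. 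Writing the wide cell as the interval $[a, a + b^{-k_j}]$, the narrow-cell midpoints are $\bsn_{c_j} = a + (c_j + \tfrac12) b^{-k_j-1}$ and the wide-cell midpoint is $\bsw_j = a + \tfrac12 b^{-k_j}$, so
\[
\bsn_{c_j} - \bsw_j = b^{-k_j-1}\Bigl(c_j + \tfrac12 - \tfrac{b}{2}\Bigr).
\]
Then $A_j$ in \eqref{eq:Aj} is the scalar
\[
A_j = \sum_{c_j=0}^{b-1} (\bsn_{c_j} - \bsw_j)^2 = b^{-2k_j-2}\sum_{c_j=0}^{b-1}\Bigl(c_j + \tfrac12 - \tfrac{b}{2}\Bigr)^2 = b^{-2k_j-2}\cdot\frac{b(b^2-1)}{12} = \frac{(b^2-1)}{12\,b}\;b^{-2k_j}.
\]
Since $b \ge 2$, the constant $(b^2-1)/(12b)$ is a fixed positive number, and as $A_j$ is a $1\times 1$ matrix its minimum eigenvalue equals this value. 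Thus $\lambda_1(A_j) = \tilde c\, b^{-2k_j}$ with $\tilde c = (b^2-1)/(12b) > 0$, verifying the hypothesis of Theorem \ref{thm:lower-bound} with $d = 1$. Applying that theorem gives a constant $c > 0$ with $\var(\hat\mu) = \sigma_{sgn}^2 \ge c\,(\log n)^{s-1}/n^{1+2/d} = c'\, m^{s-1}/b^{3m}$ for all large $m$, where $c' = c (\log b)^{s-1}$.

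For the matching upper bound I would invoke \eqref{up_bound}: any $f \in \mathcal{F}_s$ is in particular a smooth function in the sense used by \cite{basu2015scrambled} on the Sobol' extensible set $[0,1]^s$ with a recursive partition satisfying the sphericity constraint (which is automatic in dimension one), so $\var(\hat\mu) \le C (\log n)^{s-1}/n^{1+2/d} = C'\, m^{s-1}/b^{3m}$ with $C' = C(\log b)^{s-1}$. Combining the two displays and renaming constants yields the claimed two-sided bound $c\, m^{s-1}/b^{3m} \le \var(\hat\mu) \le C\, m^{s-1}/b^{3m}$ as $m \to \infty$. The only genuinely nontrivial step is the cell-geometry computation showing $\lambda_1(A_j) \asymp b^{-2k_j}$; everything else is bookkeeping with $d=1$ and $n = b^m$. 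One subtlety to be careful about is that $A_j$ (and hence $\tilde c$) should not depend on the particular wide cell or on $k_j$ beyond the explicit factor $b^{-2k_j}$ — the translation-invariance of the midpoint construction makes this clear, but it is worth stating explicitly since Theorem \ref{thm:lower-bound} requires the bound to hold uniformly in $j$ (and implicitly across the cells arising at each resolution level).
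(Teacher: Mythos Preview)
Your proposal is correct and follows essentially the same approach as the paper: compute $A_j$ explicitly for $\cx=[0,1]$ to obtain $A_j = \dfrac{b^2-1}{12b}\,b^{-2k_j}$, verify the eigenvalue hypothesis of Theorem~\ref{thm:lower-bound} with $d=1$, and then invoke that theorem for the lower bound and \cite{basu2015scrambled} for the upper bound. Your write-up is in fact more detailed than the paper's, which simply records the value of $A_j$ and refers back to the argument for Corollary~\ref{cor:triangle}.
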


Loh \cite{loh:2003} stated that for the scrambled net, the lower bound result follows from Theorem 2 in Owen \cite{smoovar}. However, in a later paper Owen \cite{owen2008local}, acknowledges that there was an error in Lemma 1 of \cite{smoovar} and appropriately corrects the proof with milder smoothness assumptions. However, unfortunately the lower bound result does not automatically follow from the new proof in \cite{owen2008local}. Corollary \ref{cor:square} corrects the proof for $\cx = [0,1]$.

The examples in Corollaries \ref{cor:triangle} and \ref{cor:square}, naturally give rise to a class of domains which are invertible linear transformations of the square and the triangle, and for which similar variance upper and lower bounds continue to hold.  In particular, a general result in this direction can be described as follows. For the sake of simplicity, consider $s=1$ and a domain $\cx\subset \real^d$ for which the condition of Theorem \ref{thm:lower-bound} hold. Then consider a $d\times d$ invertible matrix $B$ such that the eigenvalues of $BB^T$ are bounded away from $0$. Then it is easy to show that, the resulting domain $B(\cx)$ and the corresponding recursive split induced by the image of the recursive split of $\cx$ under $B$, satisfy the condition of Theorem \ref{thm:lower-bound}. As a result, the lower bound on $\sigma^2_{sgn}$ continue to hold on the transformed domain $B(\cx)$, by a simple application of Theorem \ref{thm:lower-bound}. In particular, one naturally obtains results analogous to Corollaries \ref{cor:triangle} and \ref{cor:square} for non-right-angle triangles and parallelograms respectively.

\begin{remark}
Note that the condition on $\lambda_1$ in Theorem \ref{thm:lower-bound} is not necessary. To see an example, consider $s = 1, b = 2$ and $\cx = [0,1]^2$ which we split alternately by horizontal and vertical splits. In the notation of Theorem \ref{thm:lower-bound}, it is easy to show $\lambda_1(A_1) = 0$ (owing to the collinearity of $\bsw_{k_1, t_1}, \bsn_{k_1, t_1, 0}$ and $\bsn_{k_1,t_1,1}$ for all $t_1 = 0, \ldots, b^{k_1} - 1$ and all $k_1$). Now, following the arguments of Section \ref{sec:specific_domain} and using the second eigenvalue and the corresponding eigenvector explicitly, one can easily show that we get the same lower bound as in \eqref{eq:sigma_sgn} provided $\norm{\nabla^{\{1\}} f}^2 > c$ for some $c > 0$.
\end{remark}


\subsection{Asymptotic Normality}
Equipped with the lower bound in Theorem \ref{thm:lower-bound}, we are now ready to state the  promised asymptotic normality of $W$ in the next theorem.
\begin{theorem}
\label{thm:convergence}
Let $b \ge \max(s, d, 2)$, $f \in \mathcal{F}_s$ and $W$ be as defined in \eqref{Wdef}. If \eqref{eq:sigma_sgn} holds, then $W \rightarrow \mathcal{N}(0,1)$ in distribution as $m \rightarrow \infty$.
\end{theorem}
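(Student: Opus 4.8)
The plan is to carry out the exchangeable-pair version of Stein's method, following Loh \cite{loh:2003}, but with the multiresolution (ANOVA-type) decomposition of $\hat\mu - \mu$ on $\cx^\otos$ from Section \ref{sec:mrotos} playing the role that the Haar/Walsh decomposition on $[0,1]^s$ plays in \cite{loh:2003}. First I would write $\hat\mu - \mu = \sum_{\eta} c_\eta Z_\eta$, where $\eta$ ranges over the multiresolution labels (for each coordinate $j \in \otos$ a resolution level $k_j$, an offset $t_j$, and a within-cell child index $c_j$, together with an active set $u = u(\eta) \subseteq \otos$), the $c_\eta$ are deterministic and built from the cell integrals of the gradients $\nabla^{u} f$, and the $Z_\eta$ are the mean-zero, pairwise uncorrelated random variables generated by the nested uniform scramble. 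This yields $\sigma_{sgn}^2 = \sum_\eta c_\eta^2 \var(Z_\eta)$, with the dominant contribution coming from the $|u|=s$ component and the top band of resolution levels (the $|u|<s$ terms being of smaller order, by the mechanism behind \eqref{up_bound}; the conditions defining $\mathcal F_s$ via Definition \ref{smoothf_def} ensure this dominant part is nonzero). The assumed \eqref{eq:sigma_sgn} together with \eqref{up_bound} then pins $\sigma_{sgn}^2$ to the exact order $\frac{(\log n)^{s-1}}{n^{1+2/d}}$, so all denominators below are under control. (The hypothesis $b \ge \max(s,d,2)$ guarantees that the $(0,m,s)$ geometric net and its multiresolution basis are well defined.)

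Next I would build the exchangeable pair. Choose a portion of the scramble at random --- concretely, pick $(J,K)$ with $J$ uniform on $\otos$ and $K$ uniform on $1{:}m$ --- and let $\hat\mu'$ be the estimator obtained by redrawing, independently, all permutations governing digit $K$ of coordinate $J$ (over every ancestor prefix) while freezing the rest of the scramble; set $W' = (\hat\mu' - \mu)/\sigma_{sgn}$. Then $(W,W')$ is exchangeable. Redrawing that digit resamples exactly the $Z_\eta$ whose label activates level $\ge K$ in coordinate $J$, and those resampled $Z_\eta$ are conditionally mean zero given $\mathcal G$, the $\sigma$-field of the original net. Averaging over $(J,K)$ therefore gives the (approximate) linear regression relation
\begin{align*}
\E\!\left[\,W - W' \mid \mathcal G\,\right] = \lambda\,W + R,
\end{align*}
for a positive quantity $\lambda$ determined by the construction, where $R$ arises only from the level- and $u$-dependence of the resampling probability and, because the variance is concentrated on the dominant band, satisfies $\E[R^2] = o(\lambda^2)$.

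It then remains to verify the two quantitative hypotheses of the exchangeable-pair CLT (e.g.\ Theorem~1 of \cite{loh:2003}, or the Rinott--Rotar/Stein bound): that $\E\big(\E[(W-W')^2 \mid \mathcal G] - 2\lambda\big)^2 = o(\lambda^2)$ and that $\E|W-W'|^3 = o(\lambda)$, which together give $\sup_t |\P(W \le t) - \Phi(t)| \to 0$ and hence $W \Rightarrow \mathcal N(0,1)$. For the third-moment bound one controls $|W-W'|$ by the sum of the resampled $|c_\eta Z_\eta|/\sigma_{sgn}$; each $Z_\eta$ attached to a level-$k_j$ cell in coordinate $j$ is bounded, via a first-order Taylor expansion of $f$ and the Lipschitz hypothesis of Definition \ref{smoodef}, by a constant times a positive power of that cell's diameter, and the geometric decay in the $k_j$'s produces the required power saving over $\lambda$. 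For the conditional-variance bound one expands $\E[(W-W')^2 \mid \mathcal G]$ as a weighted sum of products $Z_\eta Z_{\eta'}$ over labels sharing the resampled position, and the fourth-moment/covariance structure of the nested uniform scramble shows its fluctuation about the mean $2\lambda$ is $o(\lambda)$ in $L^2$; this uses uniform ($L^\infty$) bounds on the geometric multiresolution basis, derived from smoothness of $f$ together with regularity of the recursive partition (in particular the eigenvalue condition $\lambda_1(A_j) \ge \tilde{c}\, b^{-2k_j/d}$ already exploited for Theorem \ref{thm:lower-bound}).

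The step I expect to be the main obstacle is the concentration of the conditional quadratic variation, $\E\big(\E[(W-W')^2 \mid \mathcal G] - 2\lambda\big)^2 = o(\lambda^2)$. On $[0,1]^s$ Loh can use the exact self-similarity of dyadic boxes, so the $c_\eta$ are constant within a resolution level; on a general Sobol'-extensible $\cx^\otos$ the recursive split produces cells of irregular shape (the narrow cells $\cx_{j,(k_j,t_j,c_j)}$ versus the wide cells $\cx_{j,(k_j,t_j)}$, with centers $\bsn_{c_j}$ and $\bsw_j$ as in \eqref{center_property}), so the $c_\eta$ vary within a level and the basis functions are not uniformly bounded a priori. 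Pushing a single re-scramble down the partition tree while tracking these fluctuations, and showing the cross terms still cancel on average, is the part of the argument that does not transfer verbatim from \cite{loh:2003} and is where the Lipschitz and sphericity assumptions will be used in an essential way.
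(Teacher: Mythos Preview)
Your overall strategy---Stein's method via an exchangeable pair built from the scramble, following Loh \cite{loh:2003}---is the one the paper uses. But the specific construction you propose has a genuine gap, and two structural ingredients from the paper are missing.

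Your exchangeable pair does not yield the linear regression you claim. You redraw all permutations governing digit $K$ of coordinate $J$; this changes only the $K$-th scrambled digit of $\bsx_{i,J}$, leaving digits $K{+}1,K{+}2,\ldots$ intact. For a term $\nu_{u,\kappa}$ with $J\in u$ and $k_J\ge K$, the new point $\bsx_i'$ therefore lands in a specific ``cousin'' narrow cell (same sub-path below level $K$, random ancestor at level $K$), and $\E[\nu_{u,\kappa}(\bsx_i')\mid\mathcal G,J,K]\ne 0$ in general. Only the terms with $k_J=K-1$ are made conditionally mean zero. So the assertion ``those resampled $Z_\eta$ are conditionally mean zero given $\mathcal G$'' fails, the relation $\E[W-W'\mid\mathcal G]=\lambda W+R$ with negligible $R$ does not hold, and because small $K$ perturbs essentially the entire dominant band one has $|W-W'|=O_P(1)$ with non-vanishing probability; the Stein moment conditions do not follow. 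The paper (and Loh) avoid exactly this by two devices you omit. First, they truncate $W$ to a $\tilde W$ that keeps only $(u,\kappa)$ with $k_j\ge\tilde m:=\lfloor 2s\log_b m\rfloor$ for every $j$, and show $\E(W-\tilde W)^2=O(\tilde m/m)$. Second, the pair is formed by choosing a random \emph{point} $I$ and coordinate $J$ and replacing by independent copies \emph{all} permutations at levels $\ge\tilde m$ along the single branch determined by the first $\tilde m-1$ digits of $a_{I,J}$. This refreshes every digit $\ge\tilde m$ for the small set of points sharing that prefix, so for every surviving term in $\tilde W$ the resampled version has exact conditional mean zero, giving $\E[\tilde W^*-\tilde W\mid\cw]$ proportional to $\tilde W$ with coefficient of order $b^{-(\tilde m-1)}$.

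Finally, the place where the geometric setting actually diverges from $[0,1]^s$ is not the quadratic-variation concentration you flag, and it does not use the eigenvalue condition $\lambda_1(A_j)\ge\tilde c\,b^{-2k_j/d}$ (Theorem~\ref{thm:convergence} assumes only \eqref{eq:sigma_sgn}, not the hypothesis of Theorem~\ref{thm:lower-bound}). What is needed is a replacement for Loh's Lemma~3: uniform size bounds $|\langle f,\psi_{u\kappa\tau\gamma}\rangle|=O\bigl(b^{-|\kappa|(1+2/d)/2-|u|/2}\bigr)$, $\sigma_{u,\kappa}^2=O(b^{-2|\kappa|/d})$, and $\E(\nu_{u,\kappa}^4)=O(b^{-4|\kappa|/d})$. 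These follow directly from the smoothness of $f$ and the sphericity of the split (via Lemma~\ref{lem:inn_prod_s}), and once they are in hand Loh's Propositions~3--5 transfer verbatim, because those propositions depend only on the combinatorics of nested uniform scrambling, which is identical for scrambled geometric nets. Your worry about irregular cell shapes is absorbed by these uniform bounds rather than by a separate concentration argument; and incidentally, even on $[0,1]^s$ the coefficients $c_\eta$ are not constant within a resolution level---they depend on $f$---so that contrast is not the crux.
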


Loh \cite{loh:2003} proved a version of Theorem \ref{thm:convergence} for the scrambled net. The main idea of the proof was to create a $\tilde{W}$ satisfying $W - \tilde{W} = o_p(1)$ and there after creating an exchangeable pair $(\tilde{W}, \tilde{W}^*)$ to show that $\tilde{W} \rightarrow \mathcal{N}(0,1)$ in distribution as $m \rightarrow \infty$.  We adopt the same proof technique. Indeed, if  we assume that Theorem \ref{thm:lower-bound} holds, it is not hard to see that our proof will follow along the exact same lines of Loh \cite{loh:2003} since most of the results from \cite{loh:2003} do not depend on the domain being $[0,1]^s$. They mostly depend on the properties of the scrambled net. Since the scrambled geometric net also has the same properties it is not hard to see that almost all results from Loh \cite{loh:2003} will go through except for Lemma 3 in that paper. 
The result corresponding  to Lemma 3 in \cite{loh:2003} is our Lemma \ref{lem:cor_lemma_3} in Appendix \ref{app_thm}.

\begin{remark}
We note that as a consequence of Corollary \ref{cor:square} and Theorem \ref{thm:convergence}, Theorem 2 in \cite{loh:2003} follows as a special case.
\end{remark}

\section{Numerical Results}
\label{sec:num_results}
Before we describe the theoretical background, we present some numerical studies which verify the convergence results. 
Throughout this section and in practice to construct an estimate of the variance of the scrambled geometric net estimator we use $N$ independent replications of the sampling scheme. For each $\ell = 1, \ldots, N$, let $\hat{\mu}_\ell$ denote the scrambled geometric net estimator of the integral based on the $\ell^{\mathrm{th}}$ sample of points from the domain of interest. Then, we estimate the variance of the estimator using
\begin{align}
\label{eq:estimator_var}
\hat{\sigma}_{sgn}^2 = \frac{1}{N-1}\sum_{\ell = 1}^N\left(\hat{\mu}_\ell - \bar{\hat{\mu}}\right)^2,
\end{align}
where $\bar{\hat{\mu}} = \frac{1}{N}\sum_{\ell = 1}^N \hat{\mu}_\ell $.

Using this estimate of the variance and Theorem \ref{thm:convergence} we can construct the $(1-\alpha)100\%$ confidence interval for $\mu$ as 
\begin{align}
\label{eq:ci}
\left(\hat{\mu} - z_{\frac{\alpha}{2}}\hat{\sigma}_{sgn}, \hat{\mu} + z_{\frac{\alpha}{2}}\hat{\sigma}_{sgn}\right),
\end{align}
where $z_{\frac{\alpha}{2}}$ is the $(1 - \frac{\alpha}{2})$-th quantile of the normal distribution.

\begin{example}
In this example we work with the integrand given by $f(\bsx_1, \bsx_2) = x_{11}x_{12}^2 - x_{21}^3x_{22}^4$ defined over the product space $T^2 \times T^2$, where $\bsx_i = (x_{i1},x_{i2}) \in T^2$ for $i = 1,2$. Figure \ref{fig:sigma_eg1} shows the natural logarithm of the estimated variance as the sample size increases by taking $N = 300$ independent replications. The solid line shows the estimated variance of the scrambled geometric net estimator and the dashed line is used to display the estimated variance of the Monte Carlo estimator. It can be observed from Figure \ref{fig:sigma_eg1} that the estimated variance of scrambled geometric net estimator decays at the rate $\log (n)/n^2$, whereas the estimated variance of the Monte Carlo estimator behaves like $C/n$ for some $C > 0$.  

Note that the exact value of this integral is $\mu = 41/5040$. To see asymptotic normality, we plot the smoothed histogram of $W_\ell$, for $\ell = 1,\ldots, N$, where
\begin{align*}
W_\ell = \frac{\hat{\mu}_\ell - \mu}{\hat{\sigma}_{sgn}}.
\end{align*}
This is shown in Figure \ref{fig:normality}. It can be seen from Figure \ref{fig:normality} that as $m$ increases we get closer and closer to normality.

Using new independent replications, we construct 100 different confidence intervals for $\mu$ at level $\alpha = 0.05$. This is shown in Figure \ref{fig:ci}. Throughout this simulation we keep $n = 4^6$. The intervals shown in red fail to contain the true value of $\mu$, and as a result it can be seen that we have desired control over the coverage of the confidence intervals.
\end{example}

\begin{figure}[!h]
\centering
\includegraphics[width = \linewidth]{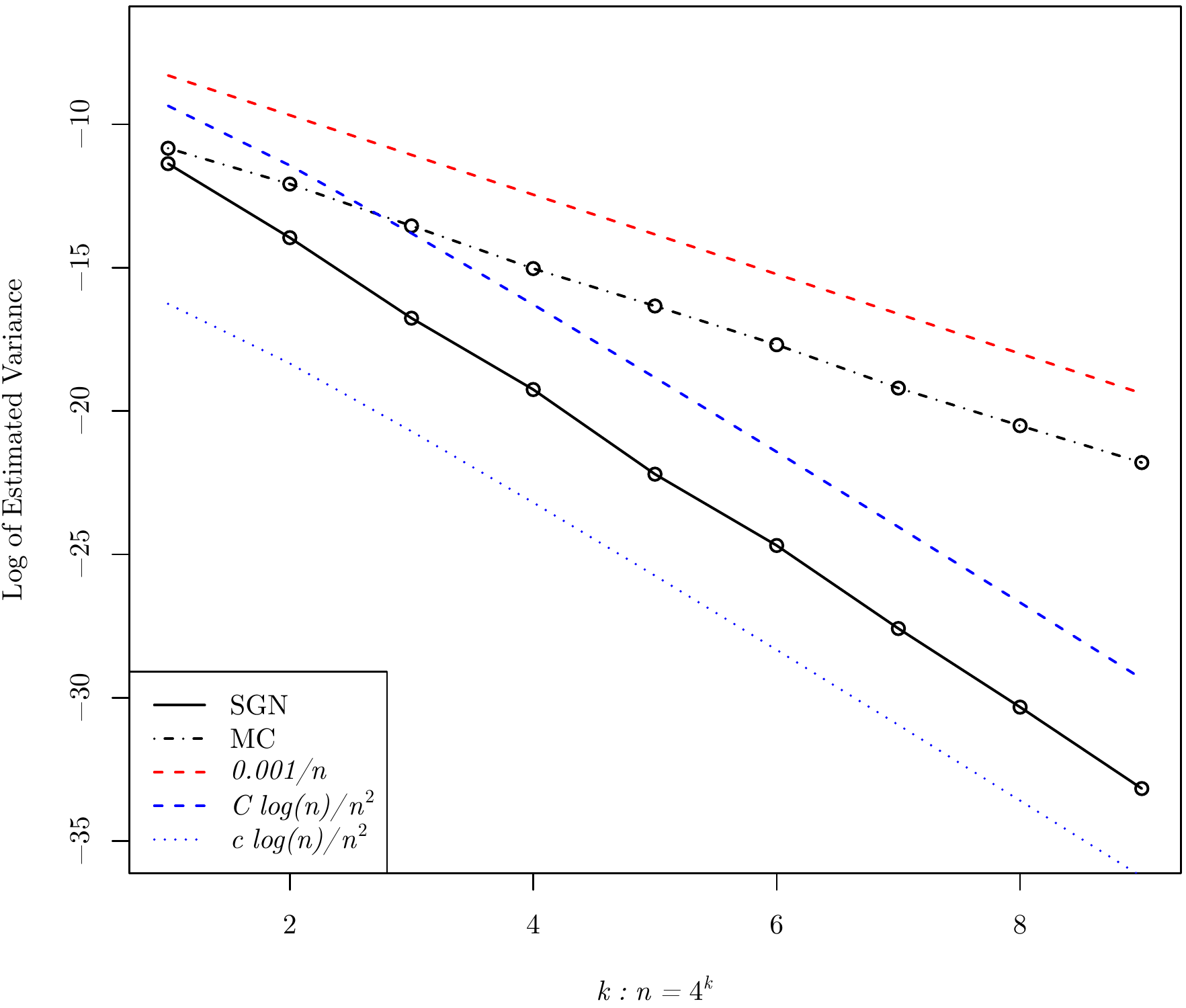}
\caption{\label{fig:sigma_eg1}
Decay of estimated variance as a function of sample size in a log-log scale. The solid and dashed black line show the log of estimated variance using scrambled geometric nets and Monte Carlo sampling respectively.}
\end{figure}

\begin{figure}[!h]
\centering
\includegraphics[width = \linewidth]{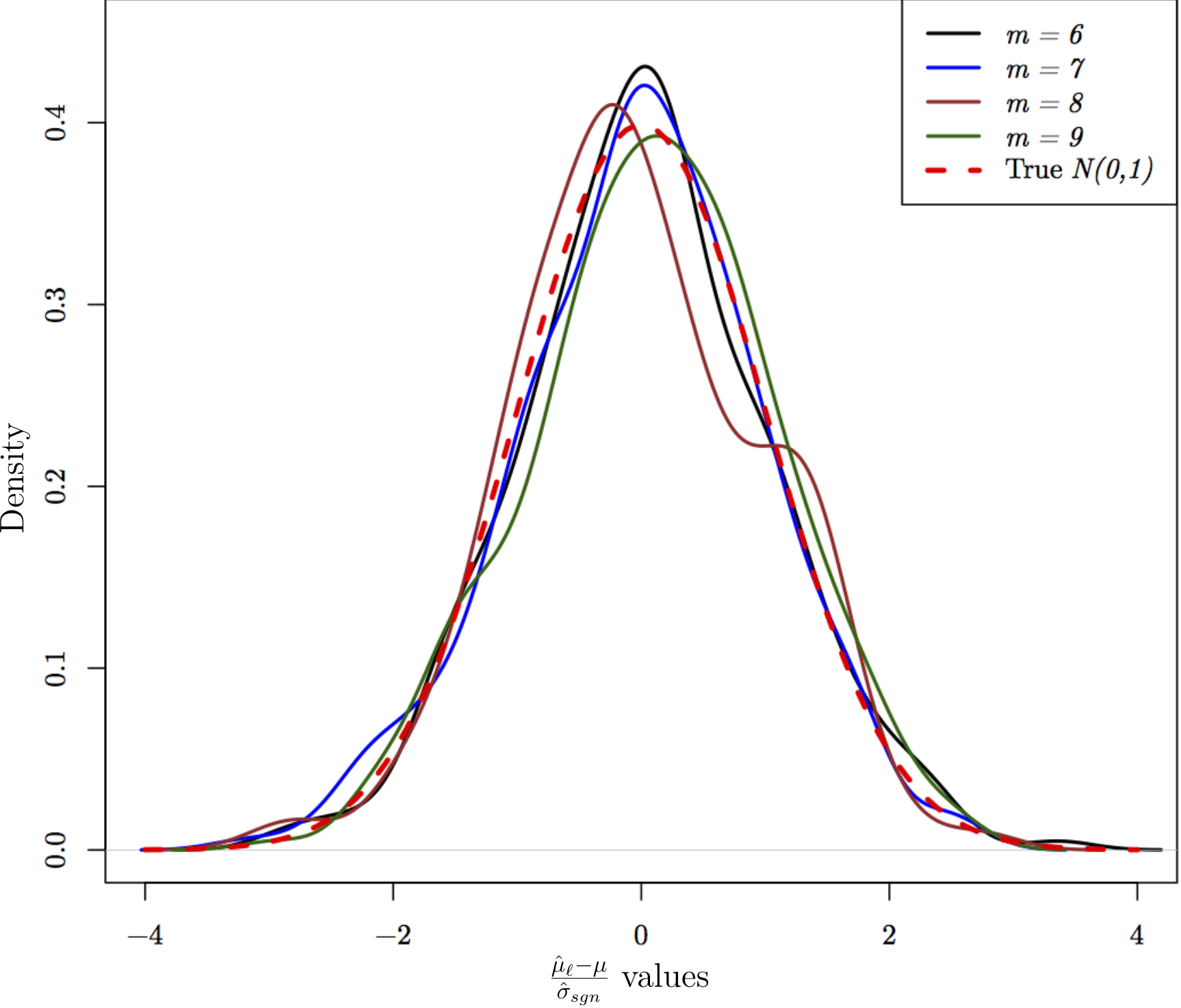}
\caption{\label{fig:normality}
Empirical verification of asymptotic normality for scrambled geometric net estimator. The $x$-axis shows the centered (with the true mean $\mu$) and scaled (with the estimated standard deviation) scrambled geometric net estimator.}
\end{figure}

\begin{figure}[!h]
\centering
\includegraphics[width = \linewidth]{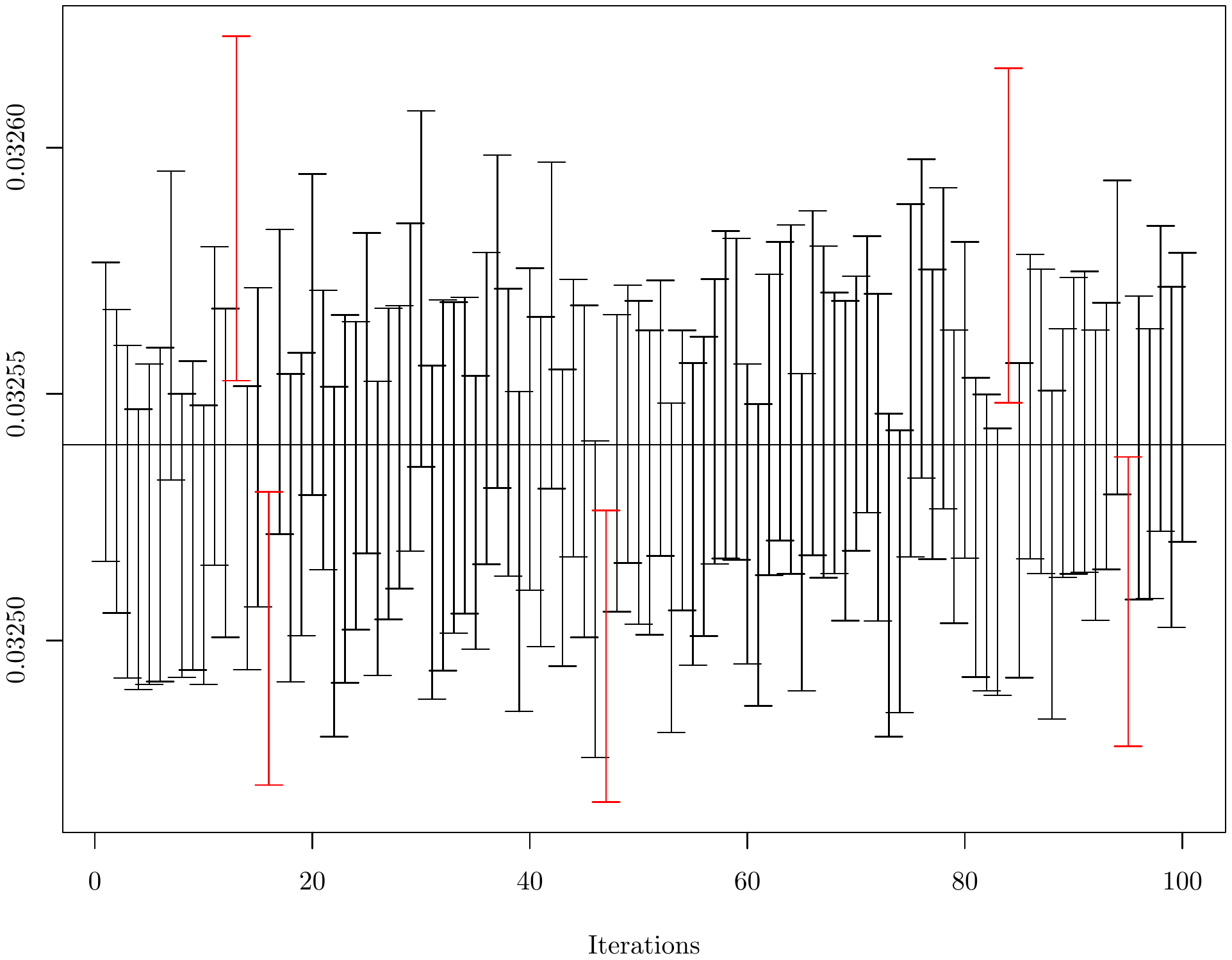}
\caption{\label{fig:ci}
100 replications of 95\% confidence intervals for $\mu$ constructed using scrambled geometric net estimators. The true value is denoted by the horizontal black line. The confidence intervals which do not contain the true $\mu$ are shown in red.}
\end{figure}

\begin{example}
In this example we use the integrand $$f(\bsx_1, \bsx_2) = x_{11}x_{12}x_{21}x_{22} \exp{x_{11}x_{12}x_{21}x_{22} },$$
defined on $T^2 \times T^2$ which is difficult to integrate analytically. A similar function was considered in \cite{owen2008local}. For this function, we compare the confidence interval formed by the scrambled geometric net estimator and the Monte Carlo estimator in Figure \ref{fig:mc_vs_sgn}. Here too we keep $n = 4^6$. The solid and dashed lines show the confidence interval using scrambled geometric nets and Monte Carlo respectively. It can be easily seen that, as predicted by the theoretical results, we get systematically much smaller confidence interval using scrambled geometric nets than Monte Carlo. 
\end{example}

\begin{figure}[!h]
\centering
\includegraphics[scale = 0.61]{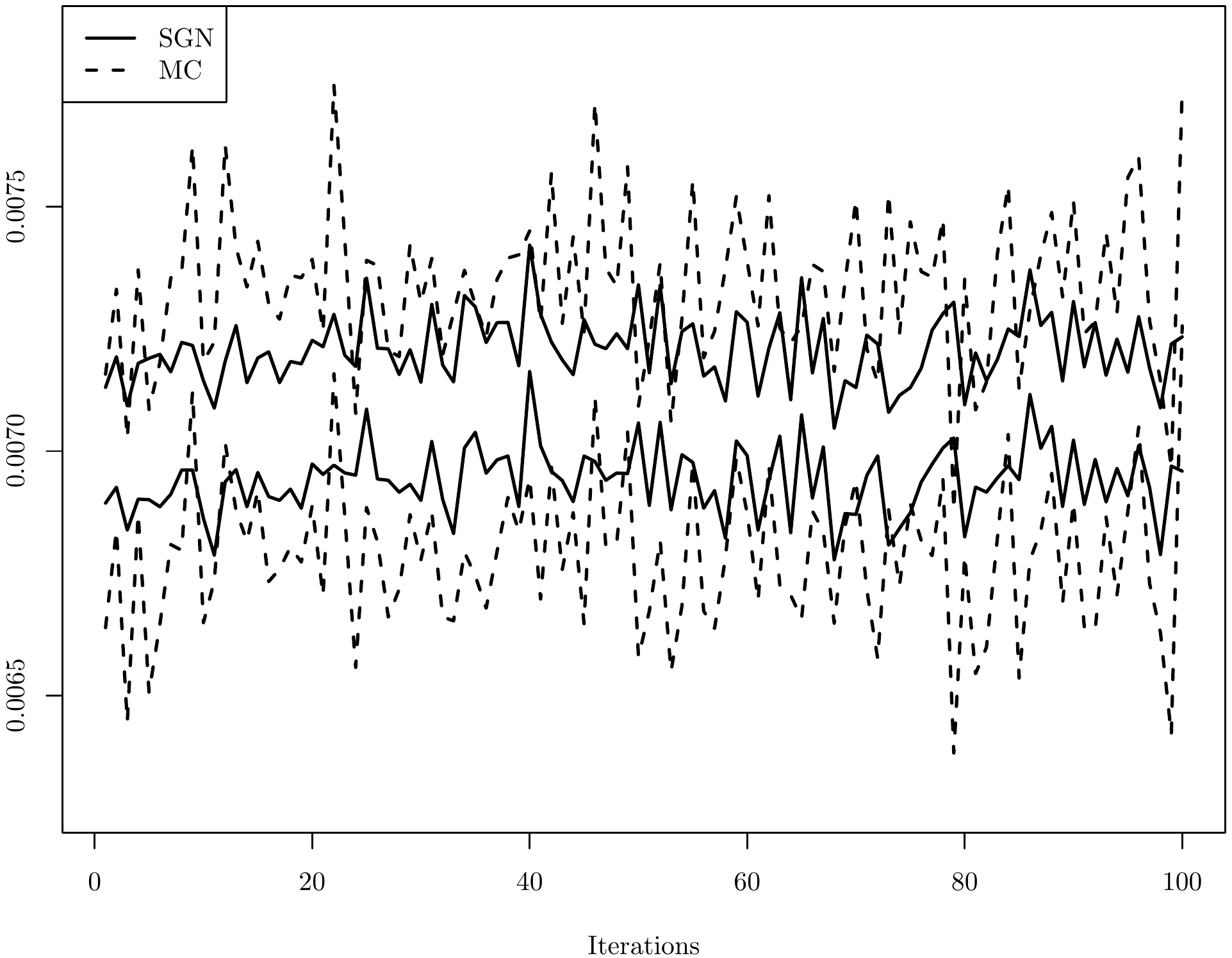}
\caption{\label{fig:mc_vs_sgn}
Confidence Intervals for $\mu$ generated by the two different sampling techniques. The solid and dashed lines show the confidence intervals using scrambled geometric nets and Monte Carlo respectively with $n = 4^6$. }
\end{figure}

\section{Background on Scrambled Geometric Nets}
\label{sec:background}
Before proving the main results, we discuss some necessary background on digital nets and the scrambling algorithm. We proceed through a sequence of definitions. Let $b\ge2$ be an integer base, $s\ge1$ is an integer dimension and
$\ints_b=\{0,1,\dots,b-1\}$.

\begin{definition}
For $k_j\in\natu_0$ and $c_j\in\ints_{b^{k_j}}$ for $j=1,\dots,s$, the set 
$$
\prod_{j=1}^s\Bigl[ \frac{c_j}{b^{k_j}},\frac{c_j+1}{b^{k_j}}\Bigr) 
$$
is a $b$-adic box of dimension $s$. 
\end{definition}

\begin{definition}
For integers $m\ge t\ge0$,
the points $\bsx_1,\dots,\bsx_{b^m}\in[0,1]^s$ 
are a $(t,m,s)$-net in base $b$
if every $b$-adic box of dimension $s$ with volume $b^{t-m}$
contains precisely $b^t$ of the $\bsx_i$. 
\end{definition}

Owen \cite{rtms} introduced the idea of nested uniform scramble of $(t,m,s)$-nets as follows.
Let $a\in[0,1]$
have base $b$ expansion
$a=\sum_{k=1}^\infty a_{k}b^{-k}$  where $a_{k}\in\ints_b$. If $a$
has two base $b$ expansions, without loss of generality we take the one with a tail of $0$s,
not a tail of $b-1$s.
We apply random permutations to the digits $a_k$ yielding $u_k\in\ints_b$
and deliver $u=\sum_{k=1}^\infty u_{k}b^{-k}$.
There are many different ways to choose the permutations \citep{altscram}.

Consider a sequence of uniform random permutations of $\mathbb{Z}_b$
\begin{align*}
\{ \pi_{j\sumdot }, \pi_{j\sumdot a_{j1}}, \pi_{j \sumdot a_{j1}, a_{j2}}, \ldots, : a_{jk} \in \mathbb{Z}_b, 1 \le j \le s, k = 1,2,\ldots   \}, 
\end{align*}
where all of permutations are independent. In a nested uniform scramble of $\bsa=(a_1,\dots,a_s)\in[0,1]^s$ we apply the above set of permutations to all components of $\bsa$ to get $u_{j,k+1}=\pi_{j\sumdot a_{j1}a_{j2},\dots,a_{jk}}(a_{j,k+1})$ for $j = 1,\ldots, s, \;k \in \natu$. We return $\bsu = (u_1, \ldots, u_s)$ where $u_j = \sum_{k=1}^\infty u_{j,k}b^{-k}$. The nested uniform scramble of a set of $n$ points $\bsa_1,\dots,\bsa_n\in[0,1]^s$ applies the same set of
permutations to all $n$ of those points.

There are several important results related to digital nets and scrambling. For details we refer to the series of papers by Owen \cite{rtms, owensinum, smoovar, snxs, owen2008local} and \citep{dick:pill:2010, nied87}.

\subsection{Geometric Transformation and Scrambled Geometric Nets}
Scrambled geometric nets are created by a transformation of a scrambled $(t,m,s)$-net on $[0,1]^s$ to the domain of interest. To explain the transformation we introduce the concept of recursive splits as in \cite{basu2015scrambled}.
For sake of completeness, we restate the definitions.

\begin{definition}
Let $\cx\subset\real^d$ have finite and positive volume. 
A $b$-fold split of $\cx$ is a collection of Borel sets $\cx_a$ for $a\in\ints_b$
with $\cx =\cup_{a=0}^{b-1}\cx_a$, $\vol(\cx_a)=\vol(\cx)/b$ for 
$a\in\ints_b$, and $\vol(\cx_a\cap\cx_{a'})=0$ for 
$0\le a<a'<b$. 
\end{definition}
\begin{definition}
Let $\cx\subset\real^d$ have finite and positive volume. 
A recursive $b$-fold split of $\cx$ is a collection $\bx$ of
sets consisting of $\cx$ and exactly one
$b$-fold split of every set in the collection. The members of $\bx$
are called cells.
\end{definition}

See \cite{basu2015scrambled} for explicit splits of triangles, spherical triangles and discs. Given a set $\cx$ and a recursive splitting of it in base $b$, to define the transformation we begin by considering any point $u \in [0,1]$. We can expand $u$ in base $b$ as $0.u_1u_2\ldots$.  Corresponding to this $u$ we define a sequence of sets
$$\cx_{1:K} = \cx_{u_1,u_2, \ldots, u_K}.$$
Then $\bsx := \phi(u)$ is any point in
$\cap_{K=1}^\infty \cx_{1:K}$.
The volume of $\cx_{1:K}$ is $b^{-K}$ which converges
to $0$ as $K\to\infty$.  To get a unique limit $\bsx$,
Basu and Owen \cite{basu2015scrambled} use the notion of a sequence of sets converging nicely
to a point \cite{stro:1994}. A recursive split in base $b$ is said to be convergent if for every infinite sequence $u_1, u_2, \ldots, \in \mathbb{Z}_b$, the cells $\cx_{u_1,\dots,u_K}$ converge nicely to a point as $K \rightarrow \infty$. A simple sufficient condition for a convergent split is the sphericity constraint.

\begin{definition}
Let $\bx$ be a recursive split of $\cx\in\real^d$ in base $b$.
Then $\bx$ satisfies the {\sl sphericity condition} if 
there exists a positive constant $C<\infty$ such that
$\diam(\cx_{u_1,\dots,u_k})\le Cb^{-k/d}$ holds for all cells
$\cx_{u_1,\dots,u_k}$ in $\bx$.
\end{definition}

\begin{definition}
Given a set $\cx\subset\real^d$ and a convergent recursive split $\bx$
of $\cx$ in base $b$, the $\bx$-transformation of $[0,1]$ is the function
$\phi = \phi_\bx:[0,1]\to\cx$ given by
$\phi(u) = \lim_{K\to\infty} \cx_{u_1,u_2,\dots,u_K}$
where $u$ has the base $b$ representation $0.u_1u_2\dots$.
\end{definition}

Now we are at a stage to define digital geometric nets in $\cx^s$ via recursive splittings. For ease of readability, we follows the same notation as in \cite{basu2015scrambled}.

For $s\in\natu$, we represent the set $\{1,2,\dots,s\}$ by $1{:}s$. 
For $j\in1{:}s$ we have bounded sets $\cx^{(j)}\subset\real^{d_j}$ with 
$\vol(\cx^{(j)})=1$. 
For sets of indices $u\subseteq1{:}s$, the complement $1{:}s \backslash u$ is denoted by $-u$. 
We use $|u|$ for the cardinality of $u$.
The Cartesian product of $\cx^{(j)}$ for $j\in u$ is denoted $\cx^u$. 
A vector $\bsx\in \cx^{1{:}s}$ has components $\bsx_j\in\cx^{(j)}$. 
The vector in $\cx^u$ with components $\bsx_j$ for $j\in u$ is 
denoted $\bsx_u$.  A point in $\cx^\otos$ has $\sum_{j=1}^sd_j$ components. 
We write it as 
$\bsx=(\bsx_1,\bsx_2,\dots,\bsx_s)$, where each $\bsx_j$ has $d_j$ components for $j = 1, \ldots, s$.

\begin{definition}
\label{cell_def_2}
For $j=1,\dots,s$, let $\bx_j$ be a recursive split of $\cx^{(j)}$ in a common base $b$.
Denote the cells of $\bx_j$ by $\cx_{j,(k,t)}$ for $k\in\natu$ and $t\in\ints_{b^k}$.
Then a $b$-adic cell for these splits is a Cartesian product of the form
$\prod_{j=1}^s\cx_{j,(k_j,t_j)}$
for integers $k_j\ge0$ and $t_j\in\ints_{b^{k_j}}$.
\end{definition}

\begin{definition}
Let $\cx^{(j)}\subset\real^{d_j}$ have volume $1$ for $j\in\otos$
and let $\bx_j$ be a recursive split of $\cx^{(j)}$ in a common base $b$.
For integers $m\ge t\ge0$,
the points $\bsx_1,\dots,\bsx_{b^m}\in\cx^\otos$ 
are a geometric $(t,m,s)$-net in base $b$
if every $b$-adic cell of volume $b^{t-m}$
contains precisely $b^t$ of the $\bsx_i$. 
\end{definition}

Basu and Owen \cite{basu2015scrambled} prove the following results regarding scrambled geometric nets. 

\begin{propo}
Let $\bsa_1,\dots,\bsa_n$ be a $(t,m,s)$-net in base $b$.
Let $\bsu_1,\dots,\bsu_n$ be a nested uniform scramble of
$\bsa_1,\dots,\bsa_n$. For $j\in\otos$, let $\bx_j$ be a recursive
base $b$ split of the unit volume set 
$\cx^{(j)}\subset\real^{d_j}$ with $\bx_j$-transformation $\phi_j$.
Then $\bsx_i = \phi(\bsu_i)$ (componentwise) is a scrambled geometric
$(t,m,s)$-net in base $b$ with probability one.
\end{propo}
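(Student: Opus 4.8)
The plan is to transport the equidistribution property of a scrambled digital $(t,m,s)$-net on $[0,1]^s$ through the componentwise map $\phi=(\phi_1,\dots,\phi_s)$, organised in three steps: scrambling preserves the net property; the $\bx_j$-transformations carry $b$-adic boxes to $b$-adic cells; and a measure-zero excision makes the two statements fit together.

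First I would record that nested uniform scrambling preserves the net property. Written coordinatewise, the first $k$ base-$b$ digits of $u_{j}$ are obtained from the first $k$ digits of $a_{j}$ by a bijection of $\ints_{b^k}$ that does not depend on the lower-order digits, so the scramble carries each $b$-adic box of a fixed shape $(k_1,\dots,k_s)$ onto another $b$-adic box of the same shape. Hence, for \emph{every} realisation of the permutations, $\bsu_1,\dots,\bsu_n$ is again a $(t,m,s)$-net in base $b$, i.e.\ every $b$-adic box $\prod_{j=1}^s[c_jb^{-k_j},(c_j+1)b^{-k_j})$ with $\sum_j k_j=m-t$ contains exactly $b^t$ of the $\bsu_i$ (this is in Owen \cite{rtms}). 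I would also use that each individual $\bsu_i$ is uniformly distributed on $[0,1]^s$.

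Next I would show that $\phi$ sends $b$-adic boxes to $b$-adic cells. If $u\in[cb^{-k},(c+1)b^{-k})$ then, under the tail-of-$0$s convention, the first $k$ base-$b$ digits of $u$ are exactly the digits of $c$, so the nested cells defining $\phi_j(u)$ satisfy $\cx_{j,u_1\cdots u_K}\subseteq\cx_{j,(k,c)}$ for all $K\ge k$, forcing $\phi_j(u)\in\overline{\cx_{j,(k,c)}}$. Since $\cx^\otos$ is Sobol' extensible with a sphericity-constrained recursive split, each $\phi_j$ is defined everywhere on $[0,1]$ and measure preserving (Basu and Owen \cite{basu2015scrambled}); consequently the $\phi_j$-preimage of the Lebesgue-null set $\bigcup_{k\ge1}\bigcup_{c\ne c'}(\overline{\cx_{j,(k,c)}}\cap\overline{\cx_{j,(k,c')}})$ of shared cell boundaries, together with the countable set of $b$-adic rationals, is a Lebesgue-null set $N_j\subset[0,1]$ off which $\phi_j(u)$ lies in $\cx_{j,(k,\lfloor ub^k\rfloor)}$ and in no sibling cell, simultaneously for all $k$. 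With $N=\{\bsu\in[0,1]^s:u_j\in N_j\text{ for some }j\}$, also Lebesgue-null, I would deduce that for every $\bsu\notin N$ and every $b$-adic cell $E=\prod_{j=1}^s\cx_{j,(k_j,c_j)}$ one has $\phi(\bsu)\in E$ if and only if $\bsu$ lies in $\prod_{j=1}^s[c_jb^{-k_j},(c_j+1)b^{-k_j})$, the $b$-adic box with the same index data and the same volume.

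Finally, since each $\bsu_i$ is uniform on $[0,1]^s$ and $N$ is Lebesgue-null, $\P(\bsu_i\in N\text{ for some }i)=0$, and the family of $b$-adic cells of volume $b^{t-m}$ is finite so no uniformity over cells is needed. On the complementary event, the equivalence from the second step matches, for each such cell $E$, the indices $i$ with $\bsx_i\in E$ with the indices $i$ for which $\bsu_i$ lies in the corresponding $b$-adic box of volume $b^{t-m}$; by the first step there are exactly $b^t$ of these. Hence $\bsx_1,\dots,\bsx_{b^m}$ is a geometric $(t,m,s)$-net with probability one, and being produced by a nested uniform scramble it is a scrambled geometric $(t,m,s)$-net. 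The hard part — really the only content beyond digit-string bookkeeping — is the measure-theoretic step: the nicely converging limit defining $\phi_j$ can a priori land on a cell boundary and the cells are only disjoint up to measure zero, so one must invoke measure preservation of $\phi_j$ and nullity of the $b$-adic rationals to excise a single exceptional null set valid for all resolution levels at once.
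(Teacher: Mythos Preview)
This proposition is not proved in the paper; it is quoted without proof as background from Basu and Owen \cite{basu2015scrambled} (introduced by ``Basu and Owen \cite{basu2015scrambled} prove the following results regarding scrambled geometric nets''). So there is no in-paper argument to compare your attempt against.

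That said, your argument follows the natural route and is essentially correct: nested uniform scrambling preserves the digital $(t,m,s)$-net property on $[0,1]^s$; the first $k$ base-$b$ digits of $u$ force $\phi_j(u)\in\overline{\cx_{j,(k,\lfloor ub^k\rfloor)}}$; and the cell-boundary ambiguities live on a Lebesgue-null set that the uniformly distributed $\bsu_i$ avoid with probability one. Two minor remarks. First, you invoke Sobol' extensibility and sphericity, but the proposition as stated requires neither --- only that each recursive split $\bx_j$ be convergent so that $\phi_j$ is well defined; those extra hypotheses enter later in the paper for the variance bounds, not here. Second, to argue that the $\phi_j$-preimage of the boundary set is Lebesgue-null you use measure preservation of $\phi_j$, which is exactly the content of the companion proposition the paper also quotes from \cite{basu2015scrambled}; this is not circular, but it means you are implicitly importing that result as well.
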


\begin{propo}
\label{prop_unif_T}
Let $\cx^{(j)}\subset\real^{d_j}$ with $\vol(\cx^{(j)})=1$ for $j\in\otos$
have convergent recursive splits $\bx_j$ in bases $b_j\ge 2$ with corresponding
transformations $\phi_j$.
Let $\bsa\in[0,1]^s$ and let $x_j$ be a base $b_j$ nested uniform scramble of $a_j$.
Then $\phi(\bsx)=(\phi_1(x_1),\dots,\phi_s(x_s))\sim\dustd(\cx^\otos)$.
\end{propo}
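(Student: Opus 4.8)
The plan is to factor the statement over the $s$ coordinates and, for a single coordinate, to split the claim into two parts: that the scrambled scalar is uniform on $[0,1]$, and that the $\bx$-transformation carries that uniform law to the uniform law on the factor. For the reduction, note that the permutation families used to scramble different coordinates $j\in\otos$ are mutually independent, so $x_1,\dots,x_s$ are independent; since each $\phi_j$ is deterministic, $(\phi_1(x_1),\dots,\phi_s(x_s))$ has law $\bigotimes_{j=1}^s(\phi_j)_*\mathcal L(x_j)$. Because $\vol$ on $\cx^\otos$ is the product of the $\vol$ on the factors (Fubini) and each $\vol(\cx^{(j)})=1$, we have $\dustd(\cx^\otos)=\bigotimes_{j=1}^s\dustd(\cx^{(j)})$, so it suffices to prove $\phi_j(x_j)\sim\dustd(\cx^{(j)})$ for a fixed $j$; from here I drop the index $j$ and write $b$, $d$ for the base and dimension.

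For the law of $x$: write $x=\sum_{k\ge1}u_kb^{-k}$ with $u_k=\pi_{\sumdot a_1\cdots a_{k-1}}(a_k)$. I would check that $u_1,u_2,\dots$ are i.i.d.\ uniform on $\ints_b$: conditionally on $u_1,\dots,u_{k-1}$ — which depend only on the permutations indexed by the \emph{fixed} strings $\sumdot,\ \sumdot a_1,\ \dots,\ \sumdot a_1\cdots a_{k-2}$ — the digit $u_k$ is the image of the fixed symbol $a_k$ under the fresh, independent, uniformly random permutation $\pi_{\sumdot a_1\cdots a_{k-1}}$, hence conditionally uniform on $\ints_b$ and independent of the past. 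A point of $[0,1]$ with i.i.d.\ uniform base-$b$ digits is uniformly distributed (the digit map pushes $\bigotimes_k\dustd(\ints_b)$ forward to Lebesgue measure and is bijective off the countable set of $b$-adic rationals), so $x\sim\dustd([0,1])$.

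For the transport property, let $\mu_\phi=\phi_*\dustd([0,1])$; both $\mu_\phi$ and $\dustd(\cx)$ are Borel probability measures on the compact set $\overline\cx$, so it suffices to show $\int g\,d\mu_\phi=\int g\,d\vol$ for every $g$ continuous on $\overline\cx$. For a digit string $v=v_1\cdots v_k$, the $b$-adic interval $I_v$ has length $b^{-k}$, the cell $\cx_v$ has $\vol(\cx_v)=b^{-k}$ (each split divides volume by $b$), and $\phi$ maps $I_v^\circ$ into $\overline{\cx_v}$, whose diameter is $\le Cb^{-k/d}$ by sphericity. Given $\varepsilon>0$, uniform continuity of $g$ lets me pick $k$ so large that $g$ oscillates by at most $\varepsilon$ on each $\overline{\cx_v}$ with $|v|=k$; with reference points $x_v\in\cx_v$, the sum $\sum_{|v|=k}b^{-k}g(x_v)$ lies within $\varepsilon$ of $\int_0^1 g(\phi(u))\,du$ (the $I_v$ partition $[0,1)$ exactly and $\phi(I_v^\circ)\subseteq\overline{\cx_v}$) and within $\varepsilon$ of $\int_{\cx}g\,d\vol$ (the $\cx_v$ cover $\cx$ and overlap only in $\vol$-null sets, which are irrelevant to Lebesgue integration); letting $\varepsilon\downarrow0$ gives $\mu_\phi=\dustd(\cx)$. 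The subtle point, which is exactly why I would argue through continuous test functions rather than matching $\mu_\phi$ and $\vol$ directly on the cells, is that cells are only required to overlap in $\vol$-null sets, so $\phi$ need not be injective and $\phi^{-1}(\cx_v)$ need not be an interval, and the collection of cells is not a $\pi$-system; the convergence hypothesis on the splits enters only to guarantee $\phi$ is a well-defined single-valued map so that $\mu_\phi$ is meaningful.
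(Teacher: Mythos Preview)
The paper does not give its own proof of this proposition; it is quoted as background from Basu and Owen (2015). Your argument is essentially the natural one and is correct in outline: reduce to a single factor by independence of the coordinate-wise scrambles, show the scrambled digits are i.i.d.\ uniform so that $x\sim\dustd([0,1])$, and then identify the pushforward $\phi_*\dustd([0,1])$ with $\dustd(\cx)$ by testing against continuous functions and using a common Riemann-type approximation over the level-$k$ cells.

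One point to tighten. In the transport step you bound $\diam(\cx_v)\le Cb^{-k/d}$ ``by sphericity,'' but the proposition only assumes a \emph{convergent} recursive split, which is strictly weaker. Your argument does not actually need the quantitative sphericity bound; it only needs $\max_{|v|=k}\diam(\cx_v)\to 0$ as $k\to\infty$. This follows from convergence alone: nice convergence along any infinite digit string forces the cell diameters along that string to tend to $0$; diameters are nonincreasing along prefixes; and if the level-$k$ maxima did not vanish, a K\"onig's-lemma argument on the $b$-ary tree of ``bad'' prefixes (those with $\diam(\cx_v)\ge\varepsilon$) would produce an infinite path along which diameters stay $\ge\varepsilon$, a contradiction. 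With that substitution your proof matches the stated hypotheses.
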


\subsection{Illustration on $(T^2)^s$}
It is easiest to visualize the scrambled geometric net when the domain is a product of triangles. Denote $T_j^2$ as the $j$-th triangle, where $T^2 = \{ \bsx \in \real^2 : x_1, x_2 \ge 0, x_1 + x_2 \le 1\}$ and we aim to construct a $n$ point scrambled geometric net on $(T^2)^s = \prod_{j = 1}^sT_j^2$.  

Let  $\bsa_1,\dots,\bsa_n$ denote a $(t,m,s)$-net in base $4$. Corresponding to $\bsa_1,\dots,\bsa_n$, let $\bsu_1,\dots,\bsu_n$ denote the nested uniform scramble where $\bsu_i = (u_{i1},\ldots, u_{is})$ for $i=1,\dots,n$. We expand $u_{ij}$ in base $4$ to get $u_{ij}=\sum_{k=1}^{\infty}u_{ijk}4^{-k}$, where $u_{ijk} \in \mathbb{Z}_4$. We now map the initial sequence $(u_{ij1},u_{ij2},\dots,u_{ijK})$ to one of the four sub-triangles of $T_j^2$ of volume $4^{-K}$, illustrated in the left panel of Figure~\ref{fig:subdiv}. We use a slightly different labelling than in \cite{basu2015scrambled} to help us in the later proofs.

\begin{figure}[!h]
\centering
\includegraphics[scale = 0.6]{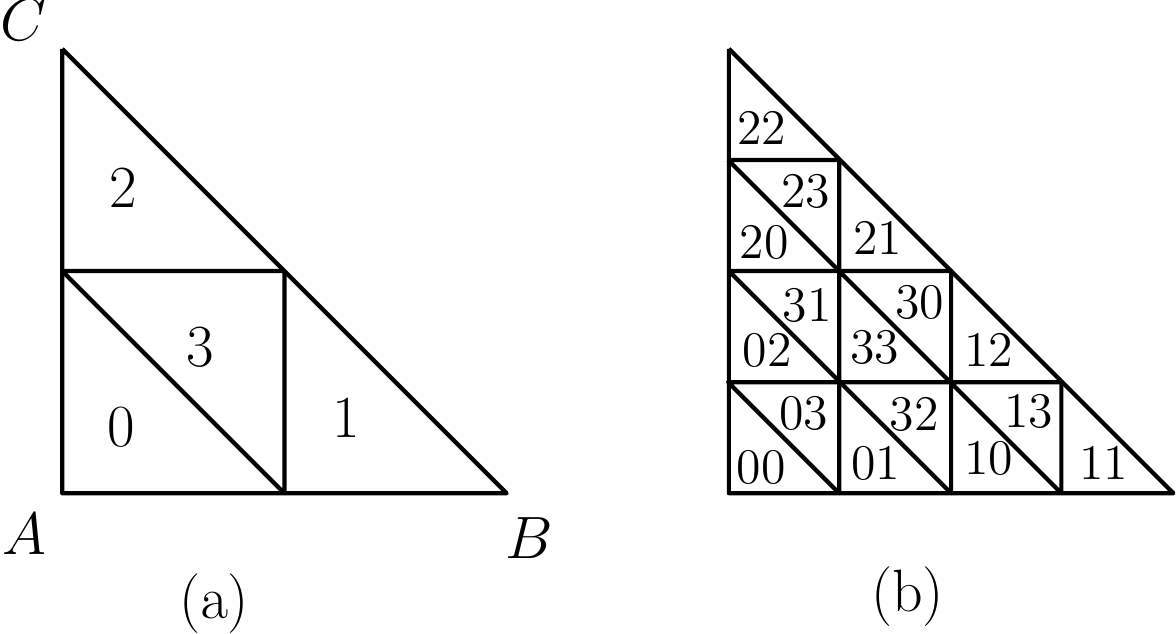}
\caption{\label{fig:subdiv}
A labeled subdivision of $\Delta(A,B,C)$ into $4$ and then
$16$ congruent subtriangles.}
\end{figure}


The subtriangle is $T_j^2( u_{ij1})$ where if $T_j^2$ has corners $A$, $B$ and $C$
then
$$
T_j^2(u) = 
\begin{cases}
\Delta\bigl( A, \frac{A+B}2,\frac{A+C}2\bigr), & u=0\\[0.5ex]
\Delta\bigl( \frac{B+A}2, B,\frac{B+C}2\bigr), & u=1\\[0.5ex]
\Delta\bigl( \frac{C+A}2, \frac{C+B}2,C\bigr), & u=2\\[0.5ex]
\Delta\bigl( \frac{B+C}2,\frac{A+C}2,\frac{A+B}2\bigr), & u=3.
\end{cases}
$$

Next, $T_j^2(u_{ij1},u_{ij2})=(T_j^2(u_{ij1}))(u_{ij2})$ corresponding
to digit $u_{ij2}$ within $T_j^2(u_{ij1})$ as shown in the right panel
of Figure~\ref{fig:subdiv}.
In general, $T_j^2(u_{ij1},\dots,u_{ijk}) = (T_j^2(u_{ij1},\dots,u_{ij(k-1)}))(u_{ijk})$.
This process maps the sequence $(u_{ij1},\dots,u_{ijK})$ to the triangle
$T_j^2(u_{ij1},u_{ij2},\dots,u_{ijK})$. The point $x_{ij}=\phi_j(u_{ij})$ is the center of triangle $T_j^2(u_{ij1},u_{ij2},\dots,u_{ijK})$ and thus $\bsx_i = (x_{i1},\dots,x_{is})\in (T^2)^s$. These points $\bsx_1, \ldots, \bsx_n$ form the scrambled geometric net on $(T^2)^s$. For more details on the triangular construction we refer to \cite{basu2015scrambled, Basu2014}.

\section{ANOVA and multiresolution for $\cx^\otos$}\label{sec:mrotos}

There is a well known analysis of variance (ANOVA) for $[0,1]^s$. In \cite{basu2015scrambled}, the authors present a general theory for $\cx^\otos$. For sake of completeness and to introduce the related notation which we need in our proofs we describe the ANOVA for $\cx^\otos$ and a multiresolution analysis of $L^2(\cx^\otos)$.

\subsection{ANOVA of $\cx^\otos$}
Let $f\in L^2(\cx^{1{:}s})$ and $u \subseteq \otos$. The ANOVA decomposition of $f$ generates terms such as  $f_u$ which in a way represents the contribution or effect of $\bsx_j$ for $j \in u$ beyond what can be explained by the lower order effects. Formally, we can write,
\begin{align}\label{eq:anovau}
f_u(\bsx) = 
\int_{\cx^{-u}} 
\Bigl(f(\bsx) - \sum_{v\subsetneq u}f_v(\bsx)\Bigr)\rd \bsx_{-u}.
\end{align}
Note that $f_u$ only depends on $\bsx_u$ although it is defined on $\cx^\otos$. If $u = \emptyset$, we define
\begin{align*}
f_\emptyset(\bsx)=\int_{\cx^{1{:}s}} f(\bsx)\rd\bsx = \mu.
\end{align*}
Let us define variances $\sigma^2_u =\int_{\cx^{1{:}s}}f_u(\bsx)^2\rd\bsx$ for $|u|>0$
and $\sigma^2_\emptyset=0$.
A useful property of the ANOVA decomposition gives us $\sum_{|u|>0}\sigma^2_u=\sigma^2$
where $\sigma^2=\int_{\cx^\otos} (f(\bsx)-\mu)^2\rd\bsx$.
Moreover, from the definition of $f_{\otos}$, we have $f(\bsx)=\sum_{u\subseteq1{:}s}f_u(\bsx)$, wherein we follow the convention that a integral over $\cx^{-\otos}$ leaves the function unaltered.

\subsection{Multiresolution}
\label{sec:multiresolution}

To explain the multiresolution of $L^2(\cx^\otos)$, we start by considering a version of Haar
wavelets in base $b$ which is adapted to $\cx\subset\real^d$ using a recursive split $\bx$ of $\cx$ in base $b\ge2$. As before, for ease of readability, we follow the same notation as in \cite{basu2015scrambled}.

Following Definition \ref{cell_def_2}, recall that we denote cells at level $k$ of a split by $\cx_{(k,t)}$ for $0\le t<b^k$. Here we have dropped the subscript $j$ since we are currently dealing with a single domain. Note that these cells can be further split into cells at level $k+1$ using
\begin{align}
\label{cellDef}
\cx_{(k,t)} = \bigcup_{c=0}^{b-1} \cx_{(k,t,c)},
\quad \text{where}\quad \cx_{(k,t,c)} =\cx_{(k+1,bt+c)}.
\end{align}

To explain the multiresolution of $\cx$ in terms of $\bx$
we introduce the following functions. Let $\varphi(\bsx)=1$ for all $\bsx\in\cx$
and
\begin{equation}\label{eq:unibasis}
\begin{split}
\psi_{ktc} 
&=  b^{(k+1)/2}1_{\bsx\in \cx_{(k,t,c)}}
-b^{(k-1)/2}1_{\bsx\in \cx_{(k,t)}}\\
& \equiv
b^{(k-1)/2}\bigl( b N_{ktc}(\bsx)-W_{kt}(\bsx)\bigr),
\end{split}
\end{equation}
where $N_{ktc}$ and $W_{kt}$ are indicator functions of
the given narrow and wide cells respectively. In~\eqref{eq:unibasis}, we scale by $b^{(k-1)/2}$ to make the norm of $\psi_{ktc}$ independent of $k$. In fact, it is easy to see that $\int \psi_{ktc}^2(\bsx)\rd\bsx=(b-1)/b$.

Now consider any $f_1,f_2\in L^2(\cx)$. Let
$\langle f_1,f_2\rangle = \int_{\cx}f_1(\bsx) f_2(\bsx)\rd\bsx$ denote their inner product. Furthermore, let
$$
f_K(\bsx) 
= \langle f,\varphi\rangle\varphi(\bsx)
+\sum_{k=1}^K\sum_{t=0}^{b^k-1}\sum_{c=0}^{b-1}
\langle f,\psi_{ktc}\rangle \psi_{ktc}(\bsx).
$$

Note that if $\bsx$ belongs to only one cell level $K+1$, then $f_K(\bsx)$ is the average of $f$ over that cell. Now for any $f\in L^1(\real^d)$, Lebesgue's differentiation theorem states that local averages over  
sets $\cs_K$, that converge nicely to $\bsx$, satisfy  
$$
\lim_{K\to\infty} \frac{\int_{\cs_K} f(\bsx)\rd\bsx}{\vol(\cs_K)}
= f(\bsx),\quad\text{a.e.}
$$   
Hence, if $\bx$ is convergent, then $\lim_{K\to\infty}f_K(\bsx)=f(\bsx)$
holds almost everywhere. Thus, assuming a convergent split we get, 
\begin{align}\label{eq:tightframe}
f(\bsx) = 
\langle f,\varphi\rangle\varphi(\bsx)
+\sum_{k=1}^\infty\sum_{t=0}^{b^k-1}\sum_{c=0}^{b-1}
\langle f,\psi_{ktc}\rangle \psi_{ktc}(\bsx).
\end{align}

Now using tensor products we can extend~\eqref{eq:tightframe} to the multidimensional setup.
We begin by generalizing the notation to a multidimensional case. For $j\in1{:}s$, let $\cx^{(j)}\subset\real^d$ have
recursive split $\bx_j$ in base $b\ge2$. Let $\varphi_j$ and $\psi_{j(ktc)}$ be the basis functions with narrow
and wide cell indicators $N_{jktc}$ and $W_{jkt}$.
For $u\subseteq1{:}s$, let $\kappa\in\natu^{|u|}$ have
elements $k_j\ge0$ for $j\in u$. 
Similarly let $\tau$ have elements $t_j\in\ints_{b^{k_j}}$ 
and  $\gamma$ have elements $c_j\in\ints_{b}$ for $j\in u$.

Now for any $\bsx\in\cx^{1{:}s}$ we define,
\begin{align}\label{eq:multiresfunction}
\psi_{u\kappa\tau\gamma}(\bsx)
:= \prod_{j\in u}\psi_{jk_jt_jc_j}(\bsx_j)
\prod_{j\not\in u}\varphi_j(\bsx_j).
\end{align}
Using \eqref{eq:multiresfunction}, the multiresolution of $L^2(\cx^{1{:}s})$ is
\begin{align*}
f(\bsx) &= \sum_{u\subseteq1{:}s}
\sum_{\kappa\mid u}
\sum_{\tau\mid u,\kappa}
\sum_{\gamma\mid u}
\langle\psi_{u\kappa\tau\gamma},f\rangle 
\psi_{u\kappa\tau\gamma}(\bsx)\\ 
 &= \mu + \sum_{|u|>0}
\sum_{\kappa\mid u}
\sum_{\tau\mid u,\kappa}
\sum_{\gamma\mid u}
\langle\psi_{u\kappa\tau\gamma},f\rangle 
\psi_{u\kappa\tau\gamma}(\bsx),
\end{align*}
where sums are over their entire ranges given the other variables. 

\subsection{Variance and gain coefficients}
\label{sec:var_and_gain}
Here we study the variance of the scrambled
geometric net estimator. Let $\{\bsa_i\}_{i=1}^n \in[0,1]^s$ be an arbitrary set of $n$ points not necessarily a digital net. Let $\bsu_i\in[0,1]^s$ for $i = 1,\ldots, n$ be its nested uniform scramble. We then map it to
$\bsx_i\in\cx^{1{:}s}$ using recursive splits in base $b$.

Using ideas from \cite{owensinum} we have,
\begin{equation*}
\var(\hat{\mu}) = \sum_{|u| > 0} \sum_{\kappa\mid u} 
\var\biggl( \frac{1}{n} \sum_{i=1}^n \nu_{u\kappa} (\bsx_i) \biggr),
\end{equation*}
where
\begin{equation}
\label{eq:nu_uk}
\nu_{u\kappa}(\bsx) = \sum_{\tau\mid u,\kappa} \sum_{\gamma\mid u} \langle f, \psi_{u\kappa\tau\gamma} \rangle \psi_{u\kappa\tau\gamma}(\bsx),
\end{equation}
with $\nu_{\emptyset,()} = \mu$. Note that the function $\nu_{u\kappa}$ is constant within regions of the form
\[
\prod_{j \in u} \cx_{j,(k_j,t_j,c_j)}\prod_{j \not\in u} \cx^{(j)},
\]
for $0 \le t_j < b^{k_j }$ and $0\le c_j<b$. Further, define
\[ \sigma_{u\kappa}^2 := \int_{\cx^\otos} \nu_{u\kappa}^2(x) \rd \bsx.\]
Now using the multiresolution-based ANOVA decomposition, we have
\begin{align}\label{eq:mranova}
\sigma^2 = \int_{\cx^{1{:}s}} (f(\bsx) - \mu)^2 \rd \bsx 
= \sum_{|u| > 0}\sum_{\kappa\mid u} \sigma_{u\kappa}^2.
\end{align}

If we assume that $\bsa_1, \ldots, \bsa_n$ is a $(t,m,s)$-net in base $b$, then its equidistribution property
determines the contribution of each $\nu_{u\kappa}$ to $\var(\hat{\mu})$. 
Let $\bsa_i=(a_{i1},\dots,a_{is})$ and define
\[ \Upsilon_{i,i',j,k} := \frac{1}{b-1} \left(b1_{\floor{b^{k+1}a_{ij}} = \floor{b^{k+1}a_{i'j}}}- 1_{\floor{b^{k}a_{ij}} = \floor{b^{k}a_{i'j}}} \right).
\]
Furthermore, for each $|u| > 0$ and $\kappa\in\natu^{|u|}$ define the gain coefficients as
\begin{align}
\label{gain_coeff}
\Gamma_{u,\kappa} := \frac{1}{n}  \sum_{i=1}^n \sum_{i'=1}^n \prod_{j \in u} \Upsilon_{i,i',j,k_j}.
\end{align}
Now using Theorem 2 of \cite{owensinum} we have, 
\begin{align}
\label{var_form_original}
\var(\hat{\mu}) = \frac{1}{n} \sum_{|u| >0} \sum_{\kappa\mid u} \Gamma_{u,\kappa} \sigma_{u\kappa}^2.
\end{align}
In usual Monte Carlo sampling, $\var(\hat \mu) = {\sigma^2}/n$ which
corresponds to all $\Gamma_{u,\kappa}=1$.
The $\Gamma_{u,\kappa}$ are
called gain coefficients since they portray the relative gain compared to usual Monte Carlo. If the point sets $\bsa_i$ are carefully selected then the gain coefficients can be reduced and we can get a substantial improvement over the usual Monte Carlo. The upper bound on $\Gamma_{u,\kappa}$ is obtained in \cite{snxs}. Assuming $\bsa_1, \ldots, \bsa_n$ being a $(t,m,s)$-net and using the upper bound on gain coefficients, the authors in \cite{basu2015scrambled} show that for certain smooth functions $f$ on $\cx^\otos$
\begin{align*}
\var(\hat{\mu}) \le C \frac{(\log n)^{s-1}}{n^{1+ 2/d}}.
\end{align*}

\section{Proof of Theorem \ref{thm:lower-bound}}
\label{section:theorem1}
The proof the theorem is organized as follows. First we state a sequence of lemmas essential for the construction of the proof. Thereafter assuming the validity of these lemmas we finish the proof of Theorem 1. The proofs of these lemmas are deferred to the appendices. Finally we note that the constants appearing in the proof of this theorem are generic and are allowed to change from one line to other, without compromising the validity of the arguments. Throughout our proofs, $f(x) = O(g(x))$ implies, $|f(x)| \le M|g(x)|$ for all $x \ge x_0$ and some positive constant $M$. 


\subsection{Supporting Lemmas}
We begin with a sequence of lemmas. The first lemma gives a lower bound on the gain coefficients $\Gamma_{u,\kappa}$ as defined in \eqref{gain_coeff}. 

\begin{lemma}  
 \label{lem:lower_bound_gamma}
 Let $b \ge \max(s,2)$. Under the above notation,
	\begin{align*}
	\Gamma_{u,\kappa} \ge \left(\frac{b}{b-1}\right)^{\min(m, s-2)}\left(1 - \frac{\min(m, s-2)}{b-1}\right) =: c_{g},
	\end{align*}
	if $|\kappa| > m - |u|$, and $\Gamma_{u,\kappa} = 0$ otherwise.	
\end{lemma}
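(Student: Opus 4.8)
The plan is to analyze the double sum $\sum_{i,i'} \prod_{j\in u}\Upsilon_{i,i',j,k_j}$ by exploiting the $(0,m,s)$-net (equidistribution) structure of $\bsa_1,\dots,\bsa_n$. First I would recall that for a single coordinate $j$ and level $k$, the quantity $\Upsilon_{i,i',j,k}$ equals $\frac{1}{b-1}(b\cdot \one_{E_{k+1}} - \one_{E_k})$ where $E_\ell$ is the event that $a_{ij}$ and $a_{i'j}$ lie in a common $b$-adic interval of length $b^{-\ell}$; in particular $\Upsilon_{i,i',j,k} = -\frac{1}{b-1}$ when the two points agree to depth $k$ but not $k+1$, equals $1$ when $i=i'$ (or more generally when they agree to depth $k+1$), and equals $0$ when they disagree already at depth $k$. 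The product over $j\in u$ is therefore supported on pairs $(i,i')$ whose coordinates in $u$ all agree to depth at least $k_j$. The key combinatorial input is that, for a $(0,m,s)$-net, the number of such pairs is controlled exactly by how $|\kappa| = \sum_{j\in u} k_j$ compares to $m$: if $|\kappa| \le m - |u|$ then the relevant $b$-adic box (of volume $b^{-|\kappa|}$ in the $u$-coordinates, times the whole cube in the $-u$ coordinates, giving volume $b^{-|\kappa|} \ge b^{-(m-|u|)} \ge b^{t-m}$ when $t=0$) contains exactly $b^{m-|\kappa|}$ points, and a careful count of the diagonal versus off-diagonal contributions makes the signed sum vanish — this is the ``$\Gamma_{u,\kappa}=0$ otherwise'' claim. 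When $|\kappa| > m - |u|$, the box is too small to be balanced by the net property, and the diagonal terms ($i=i'$), each contributing $\prod_{j\in u} 1 = 1$, dominate; there are $n$ of them, giving a baseline contribution of $1$ to $\Gamma_{u,\kappa}$, while the off-diagonal terms can only subtract a bounded amount.

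The main estimate is then a lower bound on the off-diagonal damage. I would fix $i$ and sum over $i' \ne i$. Grouping $i'$ by the vector of agreement depths $(\ell_j)_{j\in u}$ with $\ell_j \ge k_j$ (only finitely many such $i'$ exist, again by the net property: the number of $i'$ sharing a $b$-adic box of volume $b^{-|\ell|}$ with $i$ in the $u$-coordinates is at most $b^{\max(m-|\ell|,0)}$), one sees $\prod_{j\in u}\Upsilon_{i,i',j,k_j}$ takes the value $\prod_{j} \varepsilon_j$ where $\varepsilon_j \in \{1, -\tfrac{1}{b-1}\}$ according to whether $\ell_j \ge k_j+1$ or $\ell_j = k_j$ exactly. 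Summing a telescoping/product identity over these groups, the total off-diagonal contribution per $i$ telescopes to something of the form $-\sum$ (bounded number of negative terms of size $\tfrac{1}{b-1}$ each, coming from the ``boundary'' levels), and because there are at most $\min(m, s-2)$ such problematic levels — this is where $b \ge \max(s,2)$ and the $(0,m,s)$ structure enter, since $s-1$ coordinates of a net point being fixed forces the last one, limiting how many coordinates can simultaneously be at a ``critical'' depth — one obtains $\Gamma_{u,\kappa} \ge (b/(b-1))^{\min(m,s-2)}(1 - \min(m,s-2)/(b-1))$. This matches the single-coordinate Owen-type computation lifted to the product over $j\in u$, where each critical level multiplies in a factor $b/(b-1)$ and subtracts at most $1/(b-1)$.

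I expect the main obstacle to be the bookkeeping that pins down \emph{exactly} which pairs $(i,i')$ contribute and with what multiplicity, i.e.\ turning the qualitative ``net points are well-distributed'' statement into the precise signed counting identity that both (a) forces $\Gamma_{u,\kappa}=0$ for $|\kappa| \le m-|u|$ and (b) yields the clean closed form $c_g$ for $|\kappa| > m-|u|$. This is essentially a generalization of the computation behind Owen's gain-coefficient bounds in \cite{snxs} from the single cube to the product-cell setting, and the cleanest route is to reduce it to the $[0,1]^s$ digital-net case directly: the coefficients $\Upsilon_{i,i',j,k}$ and $\Gamma_{u,\kappa}$ depend only on the scrambled digital net $\bsa_1,\dots,\bsa_n$ on $[0,1]^s$ and not on the geometric transformation, so Lemma~\ref{lem:lower_bound_gamma} is purely a statement about $(0,m,s)$-nets in base $b$, and I would prove it by invoking (or re-deriving) the corresponding bound of Owen in that classical setting. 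I would defer the detailed casework to an appendix (as the authors signal they will), presenting here only the reduction and the structural reason the $\min(m,s-2)$ exponent appears.
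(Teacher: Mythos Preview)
Your approach differs substantially from the paper's. You propose to work directly from the definition $\Gamma_{u,\kappa} = n^{-1}\sum_{i,i'}\prod_{j\in u}\Upsilon_{i,i',j,k_j}$, grouping pairs $(i,i')$ by their coordinate-wise agreement depths and telescoping a signed combinatorial count. The paper instead starts from the known closed-form expression for $(0,m,s)$-nets,
\[
\Gamma_{u,k} = 1 + (1-b)^{-u}\Bigl[(-b)^{m-k}\binom{u-1}{m-k} - \sum_{j=0}^{m-k}\binom{u}{j}(-b)^j\Bigr]
\]
(with $u,k$ shorthand for $|u|,|\kappa|$), and proves two purely algebraic monotonicity lemmas: (i) when $u-(m-k)$ is odd, $\Gamma_{u,k}\ge 1$; (ii) when $u-(m-k)$ is even, $\Gamma_{u,k}\le \Gamma_{u+2,k}$. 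Together these force the minimum over admissible $(u,k)$ to occur at $u=m-k+2$, where the formula collapses exactly to $(b/(b-1))^{m-k}\bigl(1-(m-k)/(b-1)\bigr)$; a final monotonicity-in-$k$ argument (by differentiating in $k$) pins the worst $k$ at $\max(0,m-s+2)$, whence $m-k=\min(m,s-2)$.

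Your sketch has a real gap at the decisive step. The heuristic ``at most $\min(m,s-2)$ problematic levels, each multiplying in a factor $b/(b-1)$ and subtracting $1/(b-1)$'' does not correspond to any identifiable mechanism in the counting you describe. The exponent $\min(m,s-2)$ is \emph{not} the number of critical depths in a telescoping sum; it is the value of $m-k$ at the extremal pair $(u,k)=(m-k+2,k)$, and that extremal pair only emerges after the odd/even parity analysis of the closed form. A direct combinatorial argument of the kind you outline would at best produce crude bounds of the shape $\Gamma_{u,\kappa}\ge 1 - C(|u|,m-|\kappa|)/(b-1)$; obtaining the exact constant $c_g$ requires either the explicit formula or a much more delicate inclusion--exclusion than you indicate. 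Your observation that $\Gamma_{u,\kappa}$ depends only on the underlying digital net and not on the geometry is correct and worth stating, but ``invoke Owen'' does not close the argument: the paper notes that these lower bounds come from an \emph{unpublished} report of Owen and must be formalized, which is precisely the content of the two supporting lemmas.
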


We refer to Appendix \ref{app_lb_gain} for the proof of Lemma \ref{lem:lower_bound_gamma}. The proofs of the remaining sequence of lemmas are presented in Appendix \ref{app_lb_sig}.

\begin{lemma}
	\label{rule1}
	Let $f : \cx \rightarrow \mathbb{R}$ such that $\norm{\nabla f(\bsx) - \nabla f(\bsx^*)} \le B \norm{ \bsx - \bsx^*}^\beta$ for some finite $B \ge 0$ and $\beta \in (0,1]$ for all $\bsx, \bsx^* \in \cx$. Then,
	\[f(\bsx) = f(\bsx^*) + \langle \nabla f(\bsx^*), \bsx - \bsx^*\rangle + C \norm{ \bsx - \bsx^*}^{1+\beta}
	\]
	where $|C| \le B(1+ \beta)^{-1} \le B$ and $\norm{ \cdot } $ is the Euclidian norm.
\end{lemma}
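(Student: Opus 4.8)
The statement to prove is Lemma~\ref{rule1}, which is a quantitative first-order Taylor expansion with remainder controlled by the H\"older continuity of the gradient. The plan is to reduce everything to a one-dimensional computation along the segment joining $\bsx^*$ and $\bsx$.

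First I would define $g(t) = f(\bsx^* + t(\bsx - \bsx^*))$ for $t \in [0,1]$, which is well-defined and differentiable since $f$ is differentiable (its gradient is even H\"older continuous, hence continuous, so $g \in C^1$). By the chain rule, $g'(t) = \langle \nabla f(\bsx^* + t(\bsx-\bsx^*)), \bsx - \bsx^* \rangle$. Then the fundamental theorem of calculus gives $f(\bsx) - f(\bsx^*) = g(1) - g(0) = \int_0^1 g'(t)\,\rd t$. Subtracting the zeroth-order term $g'(0) = \langle \nabla f(\bsx^*), \bsx - \bsx^*\rangle$, I obtain the remainder
\begin{align*}
R := f(\bsx) - f(\bsx^*) - \langle \nabla f(\bsx^*), \bsx - \bsx^*\rangle = \int_0^1 \langle \nabla f(\bsx^* + t(\bsx - \bsx^*)) - \nabla f(\bsx^*), \bsx - \bsx^*\rangle \,\rd t.
\end{align*}

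Next I would bound $|R|$ using Cauchy--Schwarz inside the integral together with the H\"older hypothesis applied to the pair of points $\bsx^* + t(\bsx - \bsx^*)$ and $\bsx^*$, whose Euclidean distance is $t\norm{\bsx - \bsx^*}$. This yields
\begin{align*}
|R| \le \int_0^1 \norm{\nabla f(\bsx^* + t(\bsx - \bsx^*)) - \nabla f(\bsx^*)} \cdot \norm{\bsx - \bsx^*} \,\rd t \le \int_0^1 B (t\norm{\bsx - \bsx^*})^\beta \norm{\bsx - \bsx^*}\,\rd t = \frac{B}{1+\beta}\norm{\bsx - \bsx^*}^{1+\beta},
\end{align*}
using $\int_0^1 t^\beta \,\rd t = (1+\beta)^{-1}$. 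Setting $C = R / \norm{\bsx - \bsx^*}^{1+\beta}$ when $\bsx \ne \bsx^*$ (and $C = 0$ otherwise, noting $R = 0$ in that case), we get $f(\bsx) = f(\bsx^*) + \langle \nabla f(\bsx^*), \bsx - \bsx^*\rangle + C\norm{\bsx-\bsx^*}^{1+\beta}$ with $|C| \le B(1+\beta)^{-1} \le B$ since $\beta > 0$.

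There is no serious obstacle here; the only mild subtlety is that $C$ as written must be interpreted as a point-dependent coefficient (it depends on $\bsx$ and $\bsx^*$), and one must handle the degenerate case $\bsx = \bsx^*$ separately so that the division is legitimate. Everything else is a routine application of the fundamental theorem of calculus, the chain rule, Cauchy--Schwarz, and the assumed H\"older bound, so I would present it compactly in the order above.
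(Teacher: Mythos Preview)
Your proof is correct and follows essentially the same approach as the paper: write the remainder via the integral form of the first-order Taylor expansion along the segment, then apply Cauchy--Schwarz and the H\"older bound on the gradient to get $|R|\le B(1+\beta)^{-1}\norm{\bsx-\bsx^*}^{1+\beta}$. Your version is slightly more explicit in defining $C$ and handling the degenerate case $\bsx=\bsx^*$, but the argument is the same.
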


\begin{lemma}
	\label{rule2}
	Let $f$ be as in Lemma \ref{rule1}. Then,
	\begin{align*}
	b^{k+1}\int_{\cx_{(k,t,c)}} f(\bsx) \rd \bsx &- b^{k}\int_{\cx_{(k,t)}} f(\bsx) \rd \bsx \\&= \langle \bsn_{ktc} - \bsw_{kt}, \nabla f(\bsw_{kt})\rangle  
	+ O(b^{-k(1+\beta)/d}),
	\end{align*}
	where $\bsn_{ktc}, \bsw_{kt}$ denotes the center of $\cx_{(k,t,c)},\cx_{(k,t)}$ respectively.
\end{lemma}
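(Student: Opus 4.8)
\textbf{Proof proposal for Lemma \ref{rule2}.}
The plan is to start from the Taylor-type expansion of Lemma \ref{rule1} applied at the base point $\bsw_{kt}$, the center of the wide cell $\cx_{(k,t)}$, and then integrate term by term over the narrow and wide cells respectively. Writing $f(\bsx) = f(\bsw_{kt}) + \langle \nabla f(\bsw_{kt}), \bsx - \bsw_{kt}\rangle + C(\bsx)\norm{\bsx - \bsw_{kt}}^{1+\beta}$ with $|C(\bsx)| \le B$, I would plug this into both $b^{k+1}\int_{\cx_{(k,t,c)}} f(\bsx)\rd\bsx$ and $b^k \int_{\cx_{(k,t)}} f(\bsx)\rd\bsx$. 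The constant term $f(\bsw_{kt})$ contributes $b^{k+1}\vol(\cx_{(k,t,c)}) f(\bsw_{kt}) = f(\bsw_{kt})$ from the first integral (since $\vol(\cx_{(k,t,c)}) = b^{-(k+1)}$) and likewise $f(\bsw_{kt})$ from the second, so these cancel exactly.

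Next I would handle the linear term. By the defining property \eqref{center_property} of the centers, $\int_{\cx_{(k,t)}} \langle \bsx - \bsw_{kt}, \boldsymbol{\delta}\rangle \rd\bsx = 0$ for every fixed $\boldsymbol{\delta}$, so the wide-cell linear contribution vanishes. For the narrow cell, \eqref{center_property} gives $\int_{\cx_{(k,t,c)}} \langle \bsx - \bsn_{ktc}, \boldsymbol{\delta}\rangle \rd\bsx = 0$; writing $\bsx - \bsw_{kt} = (\bsx - \bsn_{ktc}) + (\bsn_{ktc} - \bsw_{kt})$ and taking $\boldsymbol{\delta} = \nabla f(\bsw_{kt})$, the first piece integrates to zero and the second is constant, yielding $b^{k+1}\vol(\cx_{(k,t,c)}) \langle \bsn_{ktc} - \bsw_{kt}, \nabla f(\bsw_{kt})\rangle = \langle \bsn_{ktc} - \bsw_{kt}, \nabla f(\bsw_{kt})\rangle$, which is precisely the stated main term.

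Finally, for the remainder term I would bound $\left| b^{k+1}\int_{\cx_{(k,t,c)}} C(\bsx)\norm{\bsx - \bsw_{kt}}^{1+\beta}\rd\bsx \right| \le B\, b^{k+1}\vol(\cx_{(k,t,c)}) \sup_{\bsx \in \cx_{(k,t,c)}} \norm{\bsx - \bsw_{kt}}^{1+\beta}$, and similarly for the wide cell. The sphericity condition gives $\diam(\cx_{(k,t)}) \le C b^{-k/d}$, and since $\cx_{(k,t,c)} \subseteq \cx_{(k,t)}$ and $\bsw_{kt} \in \cx_{(k,t)}$ (the center lies in the cell), we get $\norm{\bsx - \bsw_{kt}} \le C b^{-k/d}$ on both cells. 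Hence each remainder is $O(b^{-k(1+\beta)/d})$ and so is their difference, completing the proof. The only mildly delicate point is justifying that $\bsw_{kt}$ genuinely lies inside (or within $O(b^{-k/d})$ of) the cell so that the diameter bound applies to $\norm{\bsx - \bsw_{kt}}$; this follows from the centers being defined as points of the convex-hull / barycenter type inside the cell, but it is the step I would state most carefully.
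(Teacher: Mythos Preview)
Your argument is correct. It differs from the paper's proof only in a minor organizational choice: the paper expands the narrow-cell average around $\bsn_{ktc}$ and the wide-cell average around $\bsw_{kt}$, obtaining $f(\bsn_{ktc})-f(\bsw_{kt})+O(b^{-k(1+\beta)/d})$, and then invokes Lemma~\ref{rule1} a third time (at the two points $\bsn_{ktc},\bsw_{kt}$) to convert this difference into $\langle \bsn_{ktc}-\bsw_{kt},\nabla f(\bsw_{kt})\rangle + O(b^{-k(1+\beta)/d})$. You instead expand both averages around the single base point $\bsw_{kt}$ and use the splitting $\bsx-\bsw_{kt}=(\bsx-\bsn_{ktc})+(\bsn_{ktc}-\bsw_{kt})$ together with \eqref{center_property} to read off the main term directly. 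Your route saves one application of Lemma~\ref{rule1}; the paper's route has the mild advantage that each remainder is measured relative to the cell's own center, so the sphericity bound is applied on cells of matching scale. Both arrive at the same estimate with the same inputs (Lemma~\ref{rule1}, the defining property \eqref{center_property}, and the sphericity condition). Your flagged ``delicate point'' is fine: since $\bsw_{kt}$ is the centroid of $\cx_{(k,t)}$, it lies in the convex hull of $\cx_{(k,t)}$, whose diameter equals $\diam(\cx_{(k,t)})\le Cb^{-k/d}$, so $\norm{\bsx-\bsw_{kt}}\le Cb^{-k/d}$ for every $\bsx$ in the wide (hence also the narrow) cell.
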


The following lemma about the smoothness properties of the ANOVA components of $f$ will be crucial in our analysis of asymptotic properties of $W$.

\begin{lemma}
	\label{lem:fu_smooth}
	Let $u \subseteq \otos$ and let $f_u$ denotes the ANOVA component of $f$ as defined in \eqref{eq:anovau}. If $f$ is smooth on $\cx^\otos$, then $f_u$ is also smooth.
\end{lemma}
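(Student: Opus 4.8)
\textbf{Proof proposal for Lemma \ref{lem:fu_smooth}.}

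The plan is to proceed by strong induction on $|u|$, using the explicit recursive formula \eqref{eq:anovau} for $f_u$ together with Definition \ref{smoodef}, which characterizes smoothness through a Lipschitz condition on every mixed-partial gradient $\nabla^w$. For the base case $u = \emptyset$, the component $f_\emptyset \equiv \mu$ is constant, hence trivially smooth. For the inductive step, fix $u$ with $|u| > 0$ and assume $f_v$ is smooth for all $v \subsetneq u$. From \eqref{eq:anovau} we may write $f_u(\bsx) = \int_{\cx^{-u}}\bigl(f(\bsx) - \sum_{v \subsetneq u} f_v(\bsx)\bigr)\rd\bsx_{-u}$, so it suffices to show that (i) the integrand $g := f - \sum_{v\subsetneq u} f_v$ has Lipschitz mixed-partial gradients of every order, and (ii) this property is preserved under integrating out the coordinates $\bsx_{-u}$.

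For step (i), note that $f$ is smooth by hypothesis and each $f_v$ for $v \subsetneq u$ is smooth by the inductive hypothesis, so $g$ is a finite linear combination of smooth functions; since the class of functions satisfying Definition \ref{smoodef} is clearly closed under finite linear combinations (the Lipschitz constants and the exponent $\beta$ combine in the obvious way, taking the minimum of the $\beta$'s), $g$ is smooth. For step (ii), I would take an arbitrary $w \subseteq \otos$ and an index set of partial derivatives defining one coordinate of $\nabla^w f_u$. Since $f_u$ depends only on $\bsx_u$, the only nontrivial case is $w \subseteq u$; differentiation under the integral sign (justified by the continuity and boundedness of the relevant partials of $g$ on the compact set $\cx^\otos$, which follow from smoothness of $g$) gives $\nabla^w f_u(\bsx) = \int_{\cx^{-u}} \nabla^w g(\bsx)\rd\bsx_{-u}$, where the gradient on the right acts on the $\bsx_w$-coordinates only. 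Then for $\bsx,\bsx^* \in \cx^\otos$ differing only in their $\bsx_u$-coordinates,
\begin{align*}
\norm{\nabla^w f_u(\bsx) - \nabla^w f_u(\bsx^*)}
&= \norm{\int_{\cx^{-u}}\bigl(\nabla^w g(\bsx_u,\bsx_{-u}) - \nabla^w g(\bsx^*_u,\bsx_{-u})\bigr)\rd\bsx_{-u}} \\
&\le \int_{\cx^{-u}}\norm{\nabla^w g(\bsx_u,\bsx_{-u}) - \nabla^w g(\bsx^*_u,\bsx_{-u})}\rd\bsx_{-u} \\
&\le \int_{\cx^{-u}} B' \norm{\bsx_u - \bsx^*_u}^{\beta'}\rd\bsx_{-u} = B'\norm{\bsx - \bsx^*}^{\beta'},
\end{align*}
using Jensen's inequality (or the triangle inequality for integrals), the smoothness of $g$ with some constant $B'$ and exponent $\beta'$, $\vol(\cx^{-u}) = 1$, and $\norm{\bsx_u - \bsx^*_u} = \norm{\bsx - \bsx^*}$ when the $-u$ coordinates agree. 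This establishes the Lipschitz bound for $\nabla^w f_u$, completing the induction.

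The main obstacle is the rigorous justification of differentiation under the integral sign in step (ii): one needs that each mixed partial of $g$ of the relevant order exists, is continuous, and is dominated uniformly on $\cx^\otos$. This follows because smoothness in the sense of Definition \ref{smoodef} forces $\nabla^w g$ to exist and be (Lipschitz, hence) continuous on the compact domain $\cx^\otos$, so it is bounded there; the dominated convergence theorem then licenses exchanging $\partial$ and $\int$. A minor secondary point to handle carefully is bookkeeping of which coordinates $w$ can meet: since $f_u$ depends only on $\bsx_u$, $\nabla^w f_u \equiv 0$ whenever $w \not\subseteq u$, so the Lipschitz condition is vacuous there, and the argument above covers the remaining case $w \subseteq u$.
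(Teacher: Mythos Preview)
Your proposal is correct and follows essentially the same route as the paper: strong induction on $|u|$, the base case $f_\emptyset=\mu$, and for the inductive step writing $f_u(\bsx)=\int_{\cx^{-u}}f(\bsx)\rd\bsx_{-u}-\sum_{v\subsetneq u}f_v(\bsx)$, then applying Leibniz's rule and the triangle inequality to push the Lipschitz bound for $\nabla^w$ through the integral and the finite sum. Your explicit treatment of the case $w\not\subseteq u$ and the justification of differentiation under the integral sign are slightly more detailed than the paper's write-up, but the underlying argument is the same.
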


To state the next lemma, we begin by introducing a few notations. Let $u \subseteq \{1, \ldots, s\}$. Let $\kappa$, $\tau$, and $\gamma$ be $|u|$-tuples with components $k_j \in \mathbb{N}$, $t_j \in \mathbb{Z}_{b^{k_j}}$ and $c_j \in \mathbb{Z}_b$ respectively for $j \in u$. Let $\psi_{u\kappa\tau\gamma}$ be the multiresolution basis function defined in \eqref{eq:multiresfunction}. To simplify notation let us define a set of multi-indices $\eta$ of length $|u|$ as $S_u := \{1, \ldots, d\}^{|u|}$.
Further, for any $\eta \in S_u$ define the mixed partial $\partial^\eta f $ as 
\begin{align*}
\partial^\eta f (\bsx) = \frac{\partial^{|u|}f (\bsx) }{\prod_{j \in u} \partial x_{j,\eta_j}}.
\end{align*}

\begin{lemma}
	\label{lem:inn_prod_s}
	Let $f \in \mathcal{F}_s$. Under the above notation,
	\begin{align*}
	\langle f, \psi_{u\kappa\tau\gamma} \rangle = b^{-(|\kappa| + |u|)/2} \sum_{ \eta \in S_u} \left(\prod_{j \in u}(\bsn_{c_j} - \bsw_j)_{\eta_j}\right) \partial^{\eta} f_u( \bsw) + O\left(b^{-\frac{|\kappa|}{2}\left(1+\frac{2}{d}\right) - \frac{\tilde{k}\beta}{d}}\right)
	\end{align*}
	where $\bsn_{c_j}, \bsw_j$ defined in \eqref{center_property}, $\tilde{k} = \min_{j \in u} k_j$ and $\bsw = \{\bsw_j : j \in u\} $.
\end{lemma}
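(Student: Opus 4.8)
The plan is to expand the inner product $\langle f, \psi_{u\kappa\tau\gamma}\rangle$ by integrating against the tensor-product basis function \eqref{eq:multiresfunction}, reduce it to the one-dimensional fluctuation quantities handled in Lemma \ref{rule2}, and then iterate over the coordinates $j \in u$. First I would observe that since $\psi_{u\kappa\tau\gamma}(\bsx) = \prod_{j \in u} \psi_{jk_jt_jc_j}(\bsx_j) \prod_{j \notin u}\varphi_j(\bsx_j)$ and $\varphi_j \equiv 1$, integrating out the coordinates $\bsx_j$ for $j \notin u$ replaces $f$ by its ANOVA-type average over $\cx^{-u}$; because each $\psi_{jk_jt_jc_j}$ has mean zero, only the component $f_u$ survives (the lower-order ANOVA terms are orthogonal to $\psi_{u\kappa\tau\gamma}$), so $\langle f,\psi_{u\kappa\tau\gamma}\rangle = \langle f_u, \psi_{u\kappa\tau\gamma}\rangle$ with $f_u$ now treated as a function of $\bsx_u$ only. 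By Lemma \ref{lem:fu_smooth}, $f_u$ inherits the Lipschitz-gradient smoothness of $f$, which is what licenses the Taylor expansions below.

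Next I would unfold the definition \eqref{eq:unibasis} of each one-dimensional factor: $\psi_{jk_jt_jc_j} = b^{(k_j-1)/2}(bN_{jk_jt_jc_j} - W_{jk_jt_j})$, so the product over $j \in u$ expands $\langle f_u, \psi_{u\kappa\tau\gamma}\rangle$ into $b^{(|\kappa|-|u|)/2}$ times a signed combination of integrals of $f_u$ over product cells $\prod_{j\in u}\cx_{j,(k_j,t_j,c_j)}$ and $\prod_{j\in u}\cx_{j,(k_j,t_j)}$. This is precisely an iterated version of the left-hand side of Lemma \ref{rule2}. Applying the first-order Taylor expansion of Lemma \ref{rule1} to $f_u$ around the center point $\bsw = \{\bsw_j : j \in u\}$ coordinate-block by coordinate-block, the constant and purely lower-order-in-$j$ terms are annihilated by the mean-zero property of the wavelet factors, the linear term in the $j$-th block contributes a factor $\langle \bsn_{c_j} - \bsw_j, \nabla_j f_u(\bsw)\rangle$-type quantity, and after iterating over all $j\in u$ the leading term becomes $b^{-(|\kappa|+|u|)/2}\sum_{\eta\in S_u}\big(\prod_{j\in u}(\bsn_{c_j}-\bsw_j)_{\eta_j}\big)\partial^\eta f_u(\bsw)$. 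The diameter bounds from the sphericity condition, $\diam(\cx_{j,(k_j,t_j)}) \le Cb^{-k_j/d}$, control the sizes $\norm{\bsn_{c_j}-\bsw_j} = O(b^{-k_j/d})$ and the Taylor remainders $O(b^{-k_j(1+\beta)/d})$; collecting the dominant error across the $|u|$ blocks, with the worst block indexed by $\tilde k = \min_{j\in u}k_j$, yields the stated remainder $O\big(b^{-\frac{|\kappa|}{2}(1+2/d) - \tilde k\beta/d}\big)$ after accounting for the $b^{(|\kappa|-|u|)/2}$ prefactor and the $|u|$ volume factors $b^{-k_j}$.

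The main obstacle will be bookkeeping the cross terms in the iterated Taylor expansion and verifying that the claimed error exponent is the correct one. When expanding the product of $|u|$ factors $(bN_{jk_jt_jc_j} - W_{jk_jt_j})$ against a single multivariate $f_u$, one cannot simply apply Lemma \ref{rule2} once; instead one peels off one coordinate block at a time, at each stage picking up either a linear term or a remainder, and must check that any term containing at least one remainder factor is dominated by the stated bound — in particular that the "mixed" terms (linear in some blocks, remainder in others) do not beat $b^{-\frac{|\kappa|}{2}(1+2/d)-\tilde k\beta/d}$. This requires care because the individual $k_j$ can be very unequal; the key estimate is that a remainder in block $j$ costs an extra $b^{-k_j\beta/d}$ relative to a linear term in that block, and since $k_j \ge \tilde k$ this extra factor is at most $b^{-\tilde k\beta/d}$, while the remaining blocks can be bounded by their leading behavior. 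Assembling these observations gives the claimed expansion; the smoothness of $f_u$ from Lemma \ref{lem:fu_smooth} and the sphericity-based diameter control are the only external inputs needed beyond Lemmas \ref{rule1} and \ref{rule2}.
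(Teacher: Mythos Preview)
Your proposal is correct and follows essentially the same route as the paper: reduce $\langle f,\psi_{u\kappa\tau\gamma}\rangle$ to $\langle f_u,\psi_{u\kappa\tau\gamma}\rangle$, invoke Lemma~\ref{lem:fu_smooth} for the smoothness of $f_u$, unfold the tensor-product wavelet, and iterate Lemma~\ref{rule2} over the coordinates $j\in u$ using the sphericity diameter bound. Your discussion of the mixed error terms is in fact more explicit than the paper's, which handles them in a single sentence by noting that intermediate remainder contributions are annihilated upon integration against the remaining mean-zero $\psi_{jk_jt_jc_j}$ factors.
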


\begin{lemma}
	\label{sigmauk}
	Let $f \in \mathcal{F}_s$. Under the above notation,
	\begin{align*}
	\sigma_{u,\kappa}^2 &=  b^{-(|\kappa|+|u|)} \sum_{\tau} (\nabla^u f_u(\bsw_{\tau}))^T \tilde{A}_{u} \nabla^{u} f_u (\bsw_{\tau}) + O(b^{-2|\kappa|/d - \tilde{k}\beta/d})
	\end{align*}
	where $\tilde{A}_u = \bigotimes_{j \in u}A_j$ and $A_j$ is defined in \eqref{eq:Aj}.
\end{lemma}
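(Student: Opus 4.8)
The plan is to start from the definition $\sigma_{u,\kappa}^2 = \int_{\cx^\otos}\nu_{u\kappa}^2(\bsx)\rd\bsx$ and expand $\nu_{u\kappa}$ using \eqref{eq:nu_uk} as a sum over $\tau$ and $\gamma$ of inner products $\langle f,\psi_{u\kappa\tau\gamma}\rangle$ times the basis functions $\psi_{u\kappa\tau\gamma}$. Because the basis functions $\psi_{u\kappa\tau\gamma}$ for distinct $\tau$ (at the same $\kappa$) have disjoint supports, and for fixed $\tau$ the functions $\{\psi_{u\kappa\tau\gamma}\}_{\gamma}$ are mutually orthogonal, the integral $\int\nu_{u\kappa}^2$ decomposes as $\sum_{\tau}\sum_{\gamma}\langle f,\psi_{u\kappa\tau\gamma}\rangle^2 \cdot \|\psi_{u\kappa\tau\gamma}\|^2$, where $\|\psi_{u\kappa\tau\gamma}\|^2 = \prod_{j\in u}\frac{b-1}{b} = \left(\frac{b-1}{b}\right)^{|u|}$ from \eqref{eq:unibasis}. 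So the first step is to reduce the claim to an estimate of $\sum_{\tau}\sum_{\gamma}\langle f,\psi_{u\kappa\tau\gamma}\rangle^2$.

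Next I would substitute the expansion of $\langle f,\psi_{u\kappa\tau\gamma}\rangle$ from Lemma \ref{lem:inn_prod_s}, writing it as a main term $M_{\tau\gamma} := b^{-(|\kappa|+|u|)/2}\sum_{\eta\in S_u}\bigl(\prod_{j\in u}(\bsn_{c_j}-\bsw_j)_{\eta_j}\bigr)\partial^\eta f_u(\bsw_\tau)$ plus an error $E_{\tau\gamma} = O\bigl(b^{-\frac{|\kappa|}{2}(1+2/d) - \tilde k\beta/d}\bigr)$. Squaring gives $\langle f,\psi_{u\kappa\tau\gamma}\rangle^2 = M_{\tau\gamma}^2 + 2M_{\tau\gamma}E_{\tau\gamma} + E_{\tau\gamma}^2$. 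For the main term, summing $M_{\tau\gamma}^2$ over $\gamma$ (i.e. over the $c_j\in\ints_b$) and recognizing the bilinear structure: $\sum_{\gamma}\prod_{j\in u}(\bsn_{c_j}-\bsw_j)_{\eta_j}(\bsn_{c_j}-\bsw_j)_{\eta'_j} = \prod_{j\in u}(A_j)_{\eta_j\eta'_j}$ by the definition \eqref{eq:Aj} of $A_j$. Collecting the $\eta,\eta'$ sums then assembles exactly the quadratic form $(\nabla^u f_u(\bsw_\tau))^T\tilde A_u\nabla^u f_u(\bsw_\tau)$ with $\tilde A_u = \bigotimes_{j\in u}A_j$, and the prefactor $b^{-(|\kappa|+|u|)}\left(\frac{b-1}{b}\right)^{|u|}$ — I would absorb the $\left(\frac{b-1}{b}\right)^{|u|}$ constant into a rescaling (or note that the statement's implicit constants allow it; one should double-check whether the intended normalization of $\tilde A_u$ or $\psi$ already accounts for this factor, but in any case it is an $n$-independent constant). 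Summing over $\tau$ (there are $b^{|\kappa|}$ values of $\tau$) gives the claimed leading term $b^{-(|\kappa|+|u|)}\sum_\tau(\nabla^u f_u(\bsw_\tau))^T\tilde A_u\nabla^u f_u(\bsw_\tau)$.

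The main obstacle — and where the bookkeeping must be done carefully — is controlling the cross terms and error terms to show they are $O(b^{-2|\kappa|/d - \tilde k\beta/d})$. For the cross term, one uses $|M_{\tau\gamma}| = O(b^{-(|\kappa|+|u|)/2}\prod_{j}b^{-k_j/d}) = O(b^{-(|\kappa|+|u|)/2 - |\kappa|/d})$ since each $\|\bsn_{c_j}-\bsw_j\| = O(b^{-k_j/d})$ by the sphericity constraint (the cells have diameter $O(b^{-k_j/d})$), and $\|\nabla^u f_u\|$ is bounded because $f_u$ is smooth (Lemma \ref{lem:fu_smooth}). Multiplying by $|E_{\tau\gamma}| = O(b^{-\frac{|\kappa|}{2}(1+2/d)-\tilde k\beta/d})$ and summing over the $b^{|\kappa|}$ values of $\tau$ and $b^{|u|}$ values of $\gamma$ yields a bound of order $b^{|\kappa|}\cdot b^{-(|\kappa|+|u|)/2-|\kappa|/d}\cdot b^{-\frac{|\kappa|}{2}(1+2/d)-\tilde k\beta/d} = O(b^{-2|\kappa|/d - \tilde k\beta/d})$ after collecting exponents — the $b^{|\kappa|}$ from the $\tau$-count cancels the two factors of $b^{-|\kappa|/2}$ and one is left with $b^{-|\kappa|/d}\cdot b^{-|\kappa|/d} = b^{-2|\kappa|/d}$ times the $\beta$-decay. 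Similarly $\sum_{\tau,\gamma}E_{\tau\gamma}^2 = O(b^{|\kappa|}\cdot b^{-|\kappa|(1+2/d) - 2\tilde k\beta/d}) = O(b^{-2|\kappa|/d - 2\tilde k\beta/d})$, which is absorbed. The delicate point throughout is keeping track of how the error exponents from Lemma \ref{lem:inn_prod_s} interact with the $b^{|\kappa|}$ summation count; I would organize this by writing every quantity in terms of $b^{-a|\kappa|/d - b'\tilde k\beta/d}$-type scales and verifying the exponent arithmetic closes, exactly as in the analogous step of Owen \cite{smoovar} and Loh \cite{loh:2003}.
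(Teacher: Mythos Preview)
Your overall plan matches the paper's, but there is one concrete mistake: the functions $\{\psi_{u\kappa\tau\gamma}\}_{\gamma}$ for fixed $\tau$ are \emph{not} mutually orthogonal. A direct computation from \eqref{eq:unibasis} gives, for a single factor, $\int_{\cx}\psi_{ktc}(\bsx)\psi_{ktc'}(\bsx)\rd\bsx = \mathds{1}_{c=c'}-1/b$, so for the tensor product $\int_{\cx^u}\psi_{u\kappa\tau\gamma}\psi_{u\kappa\tau\gamma'}\rd\bsx_u=\prod_{j\in u}\bigl(\mathds{1}_{c_j=c_j'}-1/b\bigr)$. These functions form a tight frame, not an orthogonal system. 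Consequently your reduction to $\sum_{\tau}\sum_{\gamma}\langle f,\psi_{u\kappa\tau\gamma}\rangle^2\bigl(\tfrac{b-1}{b}\bigr)^{|u|}$ is wrong, and this is exactly why you found a spurious factor $\bigl(\tfrac{b-1}{b}\bigr)^{|u|}$ that you tried to absorb. Since the lemma asserts an equality for the leading term, this factor cannot be swept into constants.

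The fix is what the paper does: keep the full double sum over $\gamma,\gamma'$ with weights $\prod_{j\in u}(\mathds{1}_{c_j=c_j'}-1/b)$, substitute the main term $M_{\tau\gamma}$ from Lemma~\ref{lem:inn_prod_s}, and for each $j$ evaluate
\[
\sum_{c_j,c_j'=0}^{b-1}(\bsn_{c_j}-\bsw_j)_{\eta_j}(\bsn_{c_j'}-\bsw_j)_{\eta_j'}\Bigl(\mathds{1}_{c_j=c_j'}-\tfrac{1}{b}\Bigr)
=\sum_{c_j}(\bsn_{c_j}-\bsw_j)_{\eta_j}(\bsn_{c_j}-\bsw_j)_{\eta_j'}=(A_j)_{\eta_j\eta_j'},
\]
where the $-1/b$ piece vanishes because $\sum_{c_j}(\bsn_{c_j}-\bsw_j)=0$ by the centering property \eqref{center_property}. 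This gives exactly $b^{-(|\kappa|+|u|)}\sum_\tau(\nabla^u f_u(\bsw_\tau))^T\tilde A_u\nabla^u f_u(\bsw_\tau)$ with the correct constant. Your error-term bookkeeping is otherwise sound and carries over to the double $\gamma,\gamma'$ sum with only an extra bounded factor from $|\mathds{1}_{c_j=c_j'}-1/b|\le 1$ and the $b^{|u|}$ count over $\gamma'$.
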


\subsection{Completing Proof of Theorem \ref{thm:lower-bound}}

The main idea of the proof of the lower bound is along the following lines. 
Following Lemma \ref{lem:lower_bound_gamma} and using \eqref{var_form_original} we have,
\begin{align}
\label{var_form}
\var(\hat{\mu}) \ge \frac{c_{g}}{n} \sum_{|u| > 0}\sum_{|\kappa | > m - |u|}  \sigma_{u,\kappa}^2. 
\end{align}
Note that from Lemma \ref{sigmauk} we see,
\begin{align}
\label{eq:sigma_expansion}
\sigma_{u,\kappa}^2 &=  b^{-(|\kappa|+|u|)} \sum_{\tau} (\nabla^u f_u(\bsw_{\tau}))^T \tilde{A}_{u} \nabla^{u} f_u (\bsw_{\tau}) + O(b^{-2|\kappa|/d - \tilde{k}\beta/d}).
\end{align}
Since $\lambda_1(A_j) \ge \tilde{c} b^{-2k_j/d}$ for all $j = 1, \ldots, s$, we get $\lambda_1(\tilde{A}_u) \ge \tilde{c}^{|u|}b^{-2|\kappa|/d}$ for all $u$. Using this, we have,
\begin{align*}
 b^{-(|\kappa|+|u|)} \sum_{\tau} (\nabla^u f_u(\bsw_{\tau}))^T \tilde{A}_{u} \nabla^{u} f_u(\bsw_{\tau}) &\ge  b^{-(|\kappa| + |u|)}\sum_{\tau} \norm{\nabla^u f_u(\bsw_{\tau})}^2 \lambda_1(A_u)\\
&\ge c b^{-|\kappa|} \sum_{\tau} b^{-2|\kappa|/d} \norm{\nabla^u f_u(\bsw_{\tau})}^2.
\end{align*}
Then by Riemann integrability, there exists a $K:= K(f)$ such that for all $|\kappa| \ge K$, 
\begin{align*}
\sum_{\tau} b^{-|\kappa|} \norm{\nabla^u f_u(\bsw_{\tau})}^2 \ge \frac{1}{2} \int_{\cx^u} \norm{\nabla^u f_u (\bsx_u)}^2 \rd \bsx_u.
\end{align*}
Now using \eqref{eq:anovau} and the Leibniz integral rule we have,
\begin{align*}
\nabla^u f_u (\bsx_u) = \int_{\cx^{-u}} \nabla^u f(\bsx) \rd \bsx_{-u}.
\end{align*}
Denoting $ \nabla^u f(\bsx) = (v_1(\bsx), \ldots, v_{d^{|u|}}(\bsx))$, we have by Cauchy-Schwarz inequality and the definition of $\cf_s$, 
\begin{align*}
\int_{\cx^u} \norm{\int_{\cx^{-u}} \nabla^u f(\bsx) \rd \bsx_{-u}}^2 \rd \bsx_u &= \int_{\cx^u}\sum_{j = 1}^{d^{|u|}} \left( \int_{\cx^{-u}} v_j(\bsx) \rd \bsx_{-u}\right)^2 \rd \bsx_u\\  
&\geq \sum_{j = 1}^{d^{|u|}} \left( \int_{\cx^u}  \int_{\cx^{-u}} v_j(\bsx) \rd \bsx_{-u} \bsx_u \right)^2 \\
& =  \norm{ \int_{\cx^\otos} \nabla^u f (\bsx) \rd \bsx }^2  > 0.
\end{align*}

Hence we get,
\begin{align}
\label{eq:first_term_bound}
 b^{-(|\kappa|+|u|)} \sum_{\tau} (\nabla^u f_u(\bsw_{\tau}))^T \tilde{A}_{u} \nabla^{u} f_u (\bsw_{\tau}) \ge cb^{-2|\kappa|/d}.
\end{align}

Now let $\{a_m\}$ be a diverging sequence (to be decided later) such that $\tilde{k} \ge a_m$. Since $m \rightarrow \infty$, for large enough $m$ and $ |\kappa| > m - |u|$, using \eqref{eq:sigma_expansion} and \eqref{eq:first_term_bound}, we have $\sigma_{u, \kappa}^2 \ge c b^{-2|\kappa|/d}$. Therefore using  \eqref{var_form} we have,
\begin{align*}
\var(\hat{\mu}) &\ge \frac{c}{n} \sum_{|u| > 0}\sum_{\substack{|\kappa | > m - |u| \\ \tilde{k} \ge a_m}}  \sigma_{u,\kappa}^2 \ge \frac{c}{n} \sum_{|u| > 0}\sum_{\substack{|\kappa | > m - |u| \\ \tilde{k} \ge a_m}} b^{-2|\kappa|/d}
\end{align*}
Since we are interested in the limit as $m \rightarrow \infty$, we can assume that $m \ge s$. For such large $m$, we have
\begin{align*}
\sum_{\substack{|\kappa | > m - |u| \\ \tilde{k} \ge a_m}} b^{-2|\kappa|/d} = \sum_{r = m - |u| + 1}^{\infty} b^{-2r/d} {r - a_m|u| + |u| - 1 \choose  |u| - 1} 
\end{align*}
where the binomial coefficient is the number of $|u|$-vectors $\kappa$ of non negative integers that sum to $r$ and individually are greater than or equal to $a_m$. Making the substitution $l = r - m + |u|$ we have,
\begin{align*}
\sum_{\substack{|\kappa | > m - |u| \\ \tilde{k} \ge a_m}} b^{-2|\kappa|/d} &= b^{-2(m - |u|)/d} \sum_{l=1}^{\infty} b^{-2l/d} { l + m - a_m|u| - 1 \choose |u| - 1} \\
&\ge \frac{b^{-2(m - |u|)/d}}{(|u| - 1)!} \sum_{l=1}^\infty b^{-2l/d} (l+ m - a_m|u|  -|u| +1)^{|u| - 1}\\
&= \frac{b^{-2(m - |u|)/d}}{(|u| - 1)!} \sum_{l=1}^\infty b^{-2l/d} \sum_{j=0}^{|u| - 1} {|u| - 1 \choose j} l^j (m - a_m|u|  - |u| + 1)^{|u| - 1- j}\\
&= b^{-2(m - |u|)/d}  \sum_{j=0}^{|u| - 1} \frac{(m - a_m|u| - |u| + 1)^{|u| - 1- j}}{j! (|u| - 1-j)!} \sum_{l=1}^\infty b^{-2l/d} l^j \\
& \ge b^{-2(m - |u|)/d} \frac{(m - a_m|u|  - |u| + 1)^{|u| - 1}}{(|u| - 1)!}  \sum_{l=1}^\infty b^{-2l/d} \\
&\ge \frac{c}{n^{2/d}} (m - a_m|u| )^{|u| - 1}.
\end{align*}
Now $m = \log_b(n)$ and $|u| \le s$. Choosing $a_m$ to be diverging slowly enough to guarantee that 
$m - a_ms \ge m/2$, we have
\begin{align*}
\var(\hat{\mu}) \ge \frac{c}{n^{1+2/d}} \sum_{|u| > 0} m^{|u| - 1} \ge c\frac{m^{s-1}}{n^{1+2/d}}. 
\end{align*}
This completes the proof of Theorem \ref{thm:lower-bound}. 
$\hfill\square$

\section{Specific domains}
\label{sec:proof_cor}
In this section, we show that the conditions of Theorem \ref{thm:lower-bound} hold when the domain is a triangle or the unit interval. We further show that if we choose a different class of functions defined on $\cx = T^2$ and $s = 1$, the same lower bound \eqref{eq:sigma_sgn} holds.

\subsection{Proof of Corollary \ref{cor:triangle}}
To prove the result for a triangle, we show the explicit form of the matrix $A_{j}$ defined in \eqref{eq:Aj} by using the fact that $\cx = T^2$. Using the notation from Figure \ref{fig:subdiv} we have,
\begin{align*}
\bsn_{c_j} - \bsw_{j} &= \begin{cases}
(-r_j/6,-r_j/6) & \text{ for } c_j = 0\\
(r_j/3,-r_j/6) & \text{ for } c_j = 1\\
(-r_j/6,r_j/3) & \text{ for } c_j = 2\\
(0,0) & \text{ for } c_j = 3
\end{cases}
\qquad \text{ for } t_j \text{ such that } \cx_{j,(k_j,t_j)} \text{ is upright}\\
\bsn_{c_j} - \bsw_{j} &= \begin{cases}
(r_j/6,r_j/6) & \text{ for } c_j = 0\\
(-r_j/3,r_j/6) & \text{ for } c_j = 1\\
(r_j/6,-r_j/3) & \text{ for } c_j = 2\\
(0,0) & \text{ for } c_j = 3
\end{cases}
\qquad \text{ for } t_j \text{ such that } \cx_{j,(k_j,t_j)} \text{ is inverted}
\end{align*}
where $r_j^2 = 2 b^{-k_j}$. Using this we get,
\begin{align*}
A_{j} =  \frac{r_j^2}{6} \begin{bmatrix}
1 & -1/2\\
-1/2 & 1
\end{bmatrix} = \frac{b^{-k_j}}{6} \begin{bmatrix}
2 & -1\\
-1 & 2
\end{bmatrix}. 
\end{align*}
Thus, $\lambda_1(A_j) = b^{-k_j}/6$. Hence the lower bound follows from Theorem \ref{thm:lower-bound}. The upper bound follows from \cite{basu2015scrambled}. $\hfill \square$

\subsection{Proof of Corollary \ref{cor:square}} Following the proof for $T^2$ it is now easy to see that if $\cx = [0,1]$, then
\begin{align*}
A_j = b^{-2k_j} \left(\frac{b^2 - 1}{12b}\right).
\end{align*}
Now using the same argument as in Corollary \ref{cor:triangle} we get the desired result.



\subsection{Alternative approach for $ s = 1, \cx=T^2$}
\label{sec:specific_domain}


Here we give a different proof for \eqref{eq:sigma_sgn} assuming $\cx = T^2$. Since $s = 1$, $\tau, \kappa$ and $\gamma$ are one dimensional quantities which we denote it by $t,k$ and $c$ respectively.  We consider a different class of functions $\cg$ as follows. 

\begin{definition}
Let $\cg$ be the collection of functions $f : T^2 \rightarrow \real$ such that $\partial^{1:2} f$ is continuous and for some $\tilde{c} > 0$ either $\partial^{\{1\}} f (\bsx) > \tilde{c}$ or $\partial^{\{2\}}f(\bsx) > \tilde{c}$ for all $\bsx \in T^2$. 
\end{definition}

Now we prove the required lower bound on $\var(\hat{\mu})$ for $f \in \cg$. 

\begin{propo}
\label{prop:triangle}
Let $f \in \cg$. Then under the above notation, 
$$\var(\hat{\mu}) \ge \frac{c}{n^2}.$$
for some $c > 0$.
\end{propo}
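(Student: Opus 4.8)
The plan is to establish the lower bound $\var(\hat\mu) \ge c/n^2$ for $f \in \cg$ when $\cx = T^2$ and $s = 1$ by isolating a single favorable ANOVA/multiresolution term and showing its contribution to the variance is already of the right order. Starting from the exact expansion \eqref{var_form_original}, which for $s=1$ reads $\var(\hat\mu) = \frac1n \sum_{k} \Gamma_{\{1\},k}\,\sigma_{\{1\},k}^2$, I would first invoke Lemma \ref{lem:lower_bound_gamma} to drop the gain coefficients to a positive constant $c_g$ and restrict to $k > m-1$, giving $\var(\hat\mu) \ge \frac{c_g}{n}\sum_{k \ge m} \sigma_{\{1\},k}^2$. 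So everything reduces to producing a matching lower bound $\sigma_{\{1\},k}^2 \ge c\, b^{-3k}$ for large $k$, since there are (roughly) order-$1$ such terms near $k = m = \log_b n$ and $b^{-3m} = n^{-3}$... wait, more carefully: for $s=1$ the sum over $k \ge m$ of $b^{-2k/d}$ with $d=2$ gives $b^{-m} \asymp 1/n$, and combined with the $1/n$ prefactor yields $1/n^2$, so I need $\sigma_{\{1\},k}^2 \gtrsim b^{-k}$ for $k$ near $m$ — consistent with Corollary \ref{cor:square}-type scaling since here $d=2$.

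The core step is the asymptotic evaluation of $\sigma_{\{1\},k}^2$. I would mimic Lemmas \ref{lem:inn_prod_s} and \ref{sigmauk} but exploit the weaker hypothesis on $\cg$ (only first partials bounded below, no Lipschitz gradient assumption — instead $\partial^{1:2}f$ continuous). Using Lemma \ref{rule2} (or a direct Taylor expansion with the continuity of $\partial^{1:2}f$ playing the role of the Hölder remainder), each inner product $\langle f, \psi_{ktc}\rangle$ expands, up to lower-order error, as $b^{-(k+1)/2}\langle \bsn_{ktc}-\bsw_{kt}, \nabla f_{\{1\}}(\bsw_{kt})\rangle$, whence $\sigma_{\{1\},k}^2 \approx b^{-(k+1)}\sum_t \nabla f(\bsw_{kt})^T A_k \nabla f(\bsw_{kt})$ with $A_k$ the triangle-specific matrix computed in the proof of Corollary \ref{cor:triangle}, namely $A_k = \frac{b^{-k}}{6}\begin{bmatrix} 2 & -1 \\ -1 & 2\end{bmatrix}$ (in the base-$4$, right-angle case), whose minimum eigenvalue is $b^{-k}/6 > 0$. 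The key observation is that $A_k$ is positive definite, so $\nabla f(\bsw)^T A_k \nabla f(\bsw) \ge \frac{b^{-k}}{6}\|\nabla f(\bsw)\|^2 \ge \frac{b^{-k}}{6}\big(\max(\partial^{\{1\}}f, \partial^{\{2\}}f)\big)^2 > \frac{b^{-k}\tilde c^2}{6}$ pointwise, precisely because $f \in \cg$ forces at least one first partial to exceed $\tilde c$ everywhere. Summing over the $b^k$ values of $t$ and multiplying by $b^{-(k+1)}$ gives $\sigma_{\{1\},k}^2 \ge c\, b^{-k} + o(b^{-k})$, which is what we need.

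Assembling: $\var(\hat\mu) \ge \frac{c_g}{n}\sum_{k \ge m} \sigma_{\{1\},k}^2 \ge \frac{c}{n}\sum_{k \ge m} b^{-k} \ge \frac{c}{n}\cdot b^{-m} = \frac{c}{n}\cdot \frac1n = \frac{c}{n^2}$, for all sufficiently large $m$, which is the claim of Proposition \ref{prop:triangle}. (One should double-check the exponent bookkeeping: with $\cx = T^2 \subset \real^2$ one has $d = 2$, $b = 4$, $n = b^m = 4^m$, and the relevant per-level quantity is $\sigma_{\{1\},k}^2 \asymp b^{-k}$ times the number of cells, giving $n^{-2}$ overall — matching the $m^{s-1}/b^{2m} = 1/b^{2m} = 1/n^2$ rate of Corollary \ref{cor:triangle} at $s=1$.)

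The main obstacle I anticipate is the remainder control in the expansion of $\sigma_{\{1\},k}^2$ under the weaker smoothness of $\cg$: Lemmas \ref{rule1}--\ref{sigmauk} are stated for functions with Hölder-continuous gradient ($\beta$-Lipschitz $\nabla f$), whereas $\cg$ only assumes $\partial^{1:2}f$ continuous. I would handle this by replacing the $O(b^{-k(1+\beta)/d})$ Hölder remainders with $o(b^{-k/d})$ remainders obtained from uniform continuity of $\partial^{1:2}f$ on the compact set $T^2$ (a standard modulus-of-continuity argument: the Taylor remainder in a cell of diameter $Cb^{-k/d}$ is bounded by $\omega(Cb^{-k/d}) \cdot b^{-2k/d}$ where $\omega \to 0$). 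This suffices because we only need the leading term, which is strictly positive and of exact order $b^{-k}$, to dominate; a secondary check is that the $t$-sum $b^{-k}\sum_t \|\nabla f(\bsw_{kt})\|^2$ converges to $\int_{T^2}\|\nabla f\|^2 > 0$ by Riemann integrability, so even without pointwise positivity one gets a strictly positive limit. All other ingredients — the gain-coefficient bound, the matrix $A_k$, the geometric sum over $k$ — are already available in the excerpt.
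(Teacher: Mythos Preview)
Your route differs from the paper's and essentially re-applies the Theorem~\ref{thm:lower-bound} machinery for $s=1$: Taylor-expand $\langle f,\psi_{ktc}\rangle$ at the centroid via Lemma~\ref{rule2}, assemble the matrix $A_k$ of Corollary~\ref{cor:triangle}, and use $\lambda_1(A_k)=b^{-k}/6$ together with $\|\nabla f\|^2\ge\tilde c^2$. The paper instead proves a new Lemma~\ref{lem:sig_form} that rewrites $\sigma_k^2$ as a nonnegative sum of squared differences of sub-cell integrals, keeps only the first term $I_1=\bigl(\int_{\cx_{(k,t,0)}}f-\int_{\cx_{(k,t,1)}}f\bigr)^2$, and observes that in the base-$4$ split of $T^2$ the sub-triangles labelled $0$ and $1$ are exact congruent translates by $r/2$ in the $x_1$ direction; the one-variable mean value theorem then gives $\int_{\cx_{(k,t,1)}}f-\int_{\cx_{(k,t,0)}}f\ge \tilde c\,r^3/16$ directly, with no Taylor remainder to control at all.

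This distinction matters because your remainder step does not go through under the hypotheses of $\cg$. Getting an $o(b^{-k/2})$ error in the centroid expansion requires a modulus of continuity for $\nabla f$, but $\cg$ assumes only that the \emph{mixed} partial $\partial^{1:2}f=\partial_{x_1 x_2}f$ is continuous, which does not force $\nabla f$ to be continuous: take $f(x_1,x_2)=g(x_1)$ with $g'$ existing everywhere, bounded below by $\tilde c$, yet discontinuous at an interior point; then $\partial_{x_1x_2}f\equiv 0$ is continuous, $f\in\cg$, but Lemma~\ref{rule2} and your $o$-remainder variant both fail. The object whose uniform continuity you invoke, $\partial^{1:2}f$, would bound a second-order mixed remainder, not the first-order one appearing in the centroid expansion. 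By contrast, the paper's mean-value-theorem argument uses only that $\partial^{\{1\}}f$ exists and exceeds $\tilde c$ pointwise, which is exactly what $\cg$ supplies; this robustness under minimal smoothness is precisely the reason the subsection is presented as an ``alternative approach''.
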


\begin{proof}
Crucial to the proof of Proposition \ref{prop:triangle} is the following lemma, proof of which can be found in Appendix \ref{app:lower_bound_triangles}.
\begin{lemma} 
\label{lem:sig_form}
Under the above notation we have,
\begin{align}
\label{eq:sig_form}
\sigma_{k}^2 =  \sum_{t = 0}^{b^k - 1} \sum_{\ell = 1}^{b-1} \frac{b^{k + 1}}{\ell(\ell+1)} \left[ \sum_{i = 1}^\ell \left( \int_{\cx_{(k,t,i-1)}} f(x) dx - \int_{\cx_{(k,t, \ell) }} f(x) dx \right)\right]^2.
\end{align}
\end{lemma}

Assuming the validity of Lemma \ref{lem:sig_form}, we continue the proof of Proposition \ref{prop:triangle}. 
Note that the constants appearing in the proof of this proposition are generic and are allowed to change from one line to other, without compromising the validity of the arguments.

Consider the splitting of the triangle in base $b = 4$ introduced in \cite{Basu2014} to give an explicit form to $\cx_{(k,t,c)}$. To give a lower bound on $\sigma_{k}^2$, we need to consider $I_i$, for $i = 1,2,3$, where 
\begin{align*}
I_1 &:= \left(\int_{\cx_{(k,t,0)}} f(\bsx) d\bsx - \int_{\cx_{(k,t,1)}} f(\bsx) d\bsx\right)^2,\\
I_2 &:= \left(\int_{\cx_{(k,t,0)}} f(\bsx) d\bsx + \int_{\cx_{(k,t,1)}} f(\bsx) d\bsx -  2\int_{\cx_{(k,t,2)}} f(\bsx) d\bsx \right)^2,\\
I_3 &:= \left(\int_{\cx_{(k,t,0)}} f(\bsx) d\bsx + \int_{\cx_{(k,t,1)}} f(\bsx) d\bsx +  \int_{\cx_{(k,t,2)}} f(\bsx) d\bsx - 3\int_{\cx_{(k,t,3)}} f(\bsx) d\bsx \right)^2.
\end{align*}

Fix any $k$ and $t$ and consider the splitting of $T^2$ as given in Figure \ref{fig:labeling}. At the level $k$ triangulation, denote the length of the orthogonal sides of cell $t$ by $r$. It is easy to see that $r = \sqrt{2}b^{-k/2}$.  If $t$ denotes an upright sub-triangle, its co-ordinates can be written as $(\alpha r, \beta r)$, $((\alpha + 1)r, \beta r)$ and $(\alpha r, (\beta + 1) r)$, where $\alpha, \beta \in \{0, 1, \ldots, 2^{k} -1\}$.  Similarly, if $t$ denotes an inverted triangle, then following the figure, the coordinates can be written as $(\alpha r, (\beta+1)r), ((\alpha+1)r, (\beta+1)r)$ and $((\alpha+1)r, \beta r)$. Note that since we only need $\cx_{(k,t,c)}$ we zoom into $\cx_{(k,t)}$ to identify $\cx_{(k,t,c)}$. 

\begin{figure}[!t]
	\includegraphics[scale=0.7]{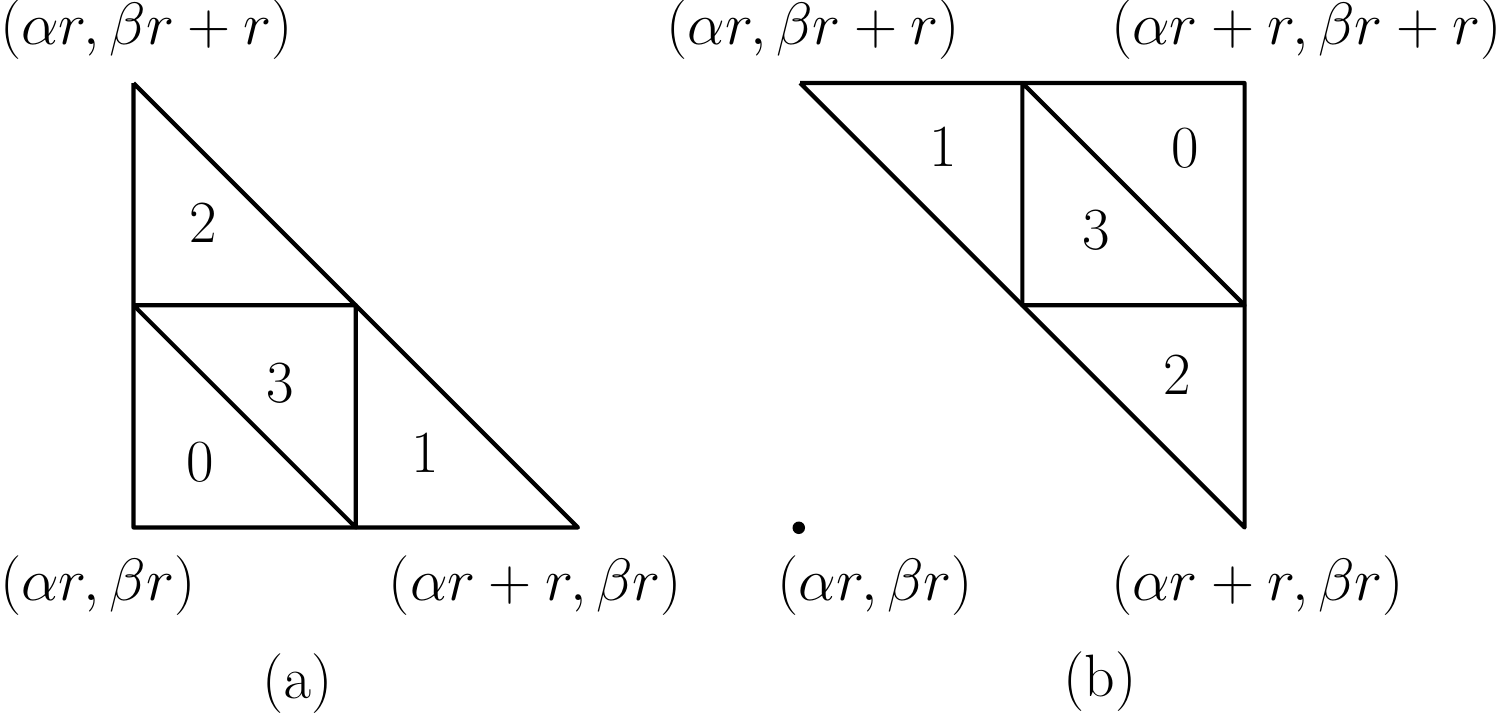}
	\caption{\label{fig:labeling}The labelling of each cell $\cx_{(k,t,c)}$ for a level $k$ triangulation of $T^2$. Subfigure (a) and (b) denotes the upright  and inverted cases of $\cx_{(k,t)}$ respectively.}
\end{figure}

We now give a lower bound to $\sigma_{k}^2$ for $f \in \mathcal{G}$.
Without the loss of generality we assume $\partial^{\{1\}}f(\bsx) > \tilde{c}$ for all $\bsx \in T^2$. Using \eqref{eq:sig_form} we have,
\begin{align}
	\label{eq:sig_final}
	\sigma_{k}^2 &=  \sum_{t = 0}^{b^k - 1} \sum_{\ell = 1}^{b-1} \frac{b^{k + 1}}{\ell(\ell+1)} \left[ \sum_{i = 1}^\ell \left( \int_{\cx_{(k,t,i-1)}} f(x) dx - \int_{\cx_{(k,t,\ell)}} f(x) dx \right)\right]^2 \nonumber \\
	&= b^{k + 1} \sum_{t = 0}^{b^k - 1} \left(\frac{I_1}{2} + \frac{I_2}{6} + \frac{I_3}{12}\right) \nonumber \\
	&\ge  \frac{b^{k+1}}{2} \sum_{t = 0}^{b^k - 1} I_1 = \frac{ b^{k+1} }{2} \left[\sum_{t : \cx_{(k,t)} \text{is upright}}I_1+ \sum_{t : \cx_{(k,t)} \text{is inverted}} I_1 \right] \nonumber \\
	&\ge \frac{ b^{k+1} }{2} \sum_{\alpha = 0}^{2^k -1} \sum_{\beta = 0}^{2^k -1 -\alpha}I_1^{\text{upright}} 
	\end{align}
where the last line follows by keeping the terms corresponding to $\cx_{(k,t)}$ being upright.  Now it is enough to give a lower bound on $I_1^{\text{upright}}$. Fix any $\epsilon > 0$ and let $P_\epsilon^0$ be a partition of $\cx_{(k,t,0)}$. By congruency, there exists a partition $P_{\epsilon}^1$ on $\cx_{(k,t,1)}$ such that if $\mathcal{C} \in P_{\epsilon}^0$ then $\mathcal{C} + r/2 \in P_{\epsilon}^1$. Using the notation from Figure \ref{fig:labeling}, we have 
	\begin{align*}
	\int_{\cx_{(k,t,0)}} f(\bsx) d\bsx &\le \sum_{\cc_i \in P_\epsilon^0}|\mathcal{C}_i| f(x_{1i}, x_{2i}) + \epsilon\\
	\int_{\cx_{(k,t,1)}} f(\bsx) d\bsx &\ge  \sum_{\cc_i \in P_\epsilon^0}|\mathcal{C}_i| f(x_{1i} + r/2, x_{2i})  - \epsilon
	\end{align*}
	Combining the above equations and using mean value theorem we have for all $\epsilon > 0$,
	\begin{align*}
\int_{\cx_{(k,t,1)}} f(\bsx) d\bsx &-\int_{\cx_{(k,t,0)}} f(\bsx) d\bsx \\ &\ge  \sum_{C_i \in P_\epsilon^0}|\mathcal{C}_i| \left( f(x_{1i} + r/2, x_{2i}) - f(x_{1i},x_{2i})\right) - 2\epsilon \\
& = \sum_{C_i \in P_\epsilon^0}|\mathcal{C}_i| \left( \frac{r}{2} \frac{\partial f}{\partial x_1}(\xi(x_{1i}),x_{2i}) \right) - 2\epsilon \\
&\ge \tilde{c} \frac{r^3}{16} - 2\epsilon,
\end{align*}
where $\xi(x_{1i}) \in (x_{1i}, x_{1i} + r/2)$. Therefore, we get for some $c > 0$,
\begin{align*}
\int_{\cx_{(k,t,1)}} f(\bsx) d\bsx &-\int_{\cx_{(k,t,0)}} f(\bsx) d\bsx  \ge c r^3.
\end{align*}
Thus, we have
\begin{align*}
\sigma_{k}^2 & \ge \frac{ b^{k+1} }{2} \sum_{\alpha = 0}^{2^k -1} \sum_{\beta = 0}^{2^k -1 -\alpha}I_1^{\text{upright}} \ge c b^{2k} r^6 \ge cb^{-k}.
\end{align*}
The proof now follows from \eqref{var_form}.
\end{proof}


\section{Stein's Method and Sketch of Proof of Theorem \ref{thm:convergence}} \label{section:theorem2}
Our proof closely follows the proof of Theorem 3 in Loh \cite{loh:2003} obtained for $[0,1]^s$. However, for the sake of completeness, we briefly describe the proof for $\cx^s$ by appropriately changing the arguments. In particular the technique relies on Stein's Method of exchangeable pairs for proving asymptotic normality of a sequence of random variables. The idea of Stein's Method can be described as follows. 

To demonstrate asymptotic normality for the sequence $\{T_m\}_{m \ge 1}$, it is enough to show that for $Z\sim N(0,1)$, $\sup_{g \in \mathcal{G}}|\E(g(T_m))-\E(g(Z))|\rightarrow 0$ as $m \rightarrow \infty$  for a suitable class of test functions $\mathcal{G}$. Stein's Method relies on obtaining suitable bounds on this quantity by using the characteristics of a standard normal distribution. In particular, a random variable is $Z$ has a standard normal distribution if $\E(h^{'}(Z)-Zh(Z))=0$ for a large enough class of ``nice" functions $h$, where $h^{'}$ denotes the derivative of $h$.  Therefore, if the distribution of a random variable $T_m$ is asymptotically close to standard normal, then one expects that  for large enough $m$,  $|\E(h^{'}(T_m)-T_mh(T_m))|$ to be small for suitable class of functions $h$. This motivates defining the Stein's Equation, namely $g(w)-\E(g(Z))=h^{'}(w)-wh(w)$. Consequently, since any solution $h_g$ to this equation satisfies $|\E(g(T_m))-\E(g(Z))|=|\E ( h_g^{'}(T_m)-T_mh_g(T_m))|$, one has $\sup_{g \in \mathcal{G}}|\E(g(T_m))-\E(g(Z))|\le \sup_{h \in \mathcal{H}}|\E ( h^{'}(T_m)-T_mh(T_m)) |$ for any $\mathcal{H}\supseteq \{h_g: g\in \mathcal{G}\}$. In particular, obtaining Berry-Esseen type bounds on the convergence of $T_m$ to normality requires $\mathcal{G}=\{g_t(\cdot):=\mathds{1}(\cdot\le t): t\in \mathbb{R}\}$ where $\one(\cdot)$ is the indicator function, and the following Lemma (\citep{barbour2005introduction, stein1972}) is crucial in bounding $\sup_{h \in \mathcal{H}}|\E ( h_{g_t}^{'}(T_m)-T_mh_{g_t}(T_m))|$ for each $t \in \mathbb{R}$.


\begin{lemma}
\label{lem:stein}
Let $\Phi$ and $\phi$ denote the cumulative distribution function and probability density function of the standard normal distribution, respectively. For every $t \in \mathbb{R}$, the unique bounded solution $h_t : \mathbb{R} \rightarrow \real$ of the differential equation 
\begin{align*}
h^{'}(w) - w h(w) = \mathds{1}(w \le t) - \Phi(t) \qquad \forall\; w\in \real
\end{align*}
is given by
\begin{align*}
h_t(w) = \begin{cases}
\Phi(w)(1 - \Phi(t))/\phi(w), & \text{ if } w \le t,\\
\Phi(t)(1 - \Phi(w))/\phi(w), & \text{ if } w > t.\\ 
\end{cases}
\end{align*}
Furthermore, $0 \le h_t(w) \le 1$ and $|h_t^{'}(w)| \le 1$ for all $w \in \real$.
\end{lemma}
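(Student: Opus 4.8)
The plan is to solve the linear first-order ODE explicitly and then read off all three claimed properties from the closed form; this is the classical route to the Stein equation bounds (cf.\ Stein \cite{stein1972} and Barbour and Chen \cite{barbour2005introduction}). Write $g(w)=\mathds{1}(w\le t)$, so that $\E[g(Z)]=\Phi(t)$ and the equation reads $h'(w)-wh(w)=g(w)-\E[g(Z)]$. Multiplying by the integrating factor $e^{-w^2/2}$ turns the left-hand side into $\tfrac{d}{dw}\bigl(e^{-w^2/2}h(w)\bigr)$, so integrating gives $e^{-w^2/2}h(w)=\int_{-\infty}^{w}\bigl(g(y)-\Phi(t)\bigr)e^{-y^2/2}\,dy+C$. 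Any two solutions differ by $Ce^{w^2/2}$, which is bounded only for $C=0$; and the identity $\int_{-\infty}^{\infty}\bigl(g(y)-\Phi(t)\bigr)e^{-y^2/2}\,dy=\sqrt{2\pi}\,\bigl(\E[g(Z)]-\Phi(t)\bigr)=0$ shows that the $C=0$ choice is indeed bounded, which gives existence and uniqueness. Splitting at $y=t$: for $w\le t$ the integrand equals $(1-\Phi(t))e^{-y^2/2}$ on $(-\infty,w]$, producing $h_t(w)=\Phi(w)(1-\Phi(t))/\phi(w)$; for $w>t$ I would instead integrate from $w$ to $+\infty$, where the integrand equals $-\Phi(t)e^{-y^2/2}$, producing $h_t(w)=\Phi(t)(1-\Phi(w))/\phi(w)$. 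This recovers the stated formula.

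Next I would prove $0\le h_t(w)\le1$. Nonnegativity is immediate since every factor is nonnegative. For the upper bound, for $w\le t$ I use $1-\Phi(t)\le1-\Phi(w)$ and for $w>t$ I use $\Phi(t)\le\Phi(w)$, so in both regimes $h_t(w)\le Q(w):=\Phi(w)(1-\Phi(w))/\phi(w)$; it then remains to bound $Q$. Using $\phi'=-w\phi$, one finds $Q'(w)$ has the sign of $\phi(w)(1-2\Phi(w))+w\Phi(w)(1-\Phi(w))$, which vanishes at $w=0$, and since $Q(-w)=Q(w)$ this identifies $w=0$ as the global maximum, with $Q(0)=\sqrt{2\pi}/4<1$.

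For the derivative bound I would differentiate the ODE rather than the explicit formula: $h_t'(w)=\bigl(\mathds{1}(w\le t)-\Phi(t)\bigr)+wh_t(w)$, which equals $(1-\Phi(t))\bigl(1+w\Phi(w)/\phi(w)\bigr)$ on $(-\infty,t]$ and $-\Phi(t)\bigl(1-w(1-\Phi(w))/\phi(w)\bigr)$ on $(t,\infty)$. On $(-\infty,t]$ it is nonnegative because $\phi(w)+w\Phi(w)\ge0$ (its derivative is $\Phi(w)\ge0$ and it tends to $0$ as $w\to-\infty$); for the upper bound I would split at $w=0$, the case $w\le0$ being trivial and the case $0<w\le t$ reducing, after using $1-\Phi(t)\le1-\Phi(w)$, to the Mills-ratio inequality $w(1-\Phi(w))\le\phi(w)$. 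The interval $(t,\infty)$ is handled the same way under the reflection $w\mapsto-w$, again reducing to the Mills-ratio bound $t(1-\Phi(t))\le\phi(t)$, which holds for all $t$ (trivially for $t\le0$; for $t>0$ because $r(t):=\phi(t)-t(1-\Phi(t))$ has $r'(t)=-(1-\Phi(t))\le0$ and $r(+\infty)=0$). I expect this last step — correctly localizing the extrema of $h_t'$ on the two half-lines and reducing the endpoint estimates to the Mills-ratio inequality — to be the only genuinely delicate part; obtaining the closed form and the bound $0\le h_t\le1$ are routine.
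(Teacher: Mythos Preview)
The paper does not prove this lemma; it is stated with citations to \cite{stein1972} and \cite{barbour2005introduction} and used as a black-box input to the Stein's-method argument. Your derivation is the standard one from those references and is correct in substance.

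The only place that could use one more sentence is the claim that $Q(w)=\Phi(w)(1-\Phi(w))/\phi(w)$ is globally maximized at $w=0$: evenness together with $Q'(0)=0$ does not by itself exclude other critical points. A clean way to close this, using only ingredients you already have, is to observe from the ODE that $h_t$ is nondecreasing on $(-\infty,t]$ (your argument that $\phi(w)+w\Phi(w)\ge 0$) and nonincreasing on $(t,\infty)$ (for $w>0$ the Mills inequality gives $1-w(1-\Phi(w))/\phi(w)\ge 0$, while for $w\le 0$ the bracket exceeds $1$), so $\sup_w h_t(w)=h_t(t)=Q(t)$. Then set $P(t):=\phi(t)-\Phi(t)(1-\Phi(t))$; it is even, $P(0)=1/\sqrt{2\pi}-1/4>0$, $P(+\infty)=0$, and $P'(t)=\phi(t)\bigl(2\Phi(t)-1-t\bigr)\le 0$ on $[0,\infty)$ since $2\Phi(t)-1-t$ vanishes at $0$ with derivative $2\phi(t)-1<0$. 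Hence $P\ge 0$ everywhere, i.e.\ $Q\le 1$. The rest of your argument (closed form, $h_t\ge 0$, and the bound $|h_t'|\le 1$ via the Mills-ratio inequality) is fine.
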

According to the above result, one has by an application of Slutsky's Lemma, that to show $W$ is asymptotically normal as $m \rightarrow \infty$, it is enough for us to uniformly bound $|\E (h_{t}^{'}(\tilde{W})-\tilde{W}h_{t}(\tilde{W}))|$ for $h_t$ as in Lemma \ref{lem:stein} and any $\tilde{W}$ satisfying $W-\tilde{W}\rightarrow 0$ in probability for $W$ defined in \ref{Wdef}. One such convenient $\tilde{W}$ can be introduced as follows. Rewriting $W$ by using the multiresolution analysis as in introduced in Section \ref{sec:multiresolution}, we note that,
\begin{align*}
W &= \frac{\hat{\mu} - \mu}{\sigma_{sgn}}
=  \frac{1}{n\sigma_{sgn}} \sum_{i=1}^{n} (f(\bsx_i) - \mu)
=  \frac{1}{n\sigma_{sgn}} \sum_{i=1}^{n} \sum_{|u|>0}
\sum_{\kappa\mid u} \nu_{u\kappa}(\bsx_i),
\end{align*}
where $ \nu_{u\kappa}(\cdot) $ is defined in \eqref{eq:nu_uk}. Note that $u$ chooses a subset of $\{1,\ldots,s\}$ and $\kappa$ is an $|u|$-dimensional vector containing the levels of partitions. To make the notation simpler we introduce $\tilde{\bsk}$ as a $s$-dimensional vector where $\tilde{\bsk}_u = \kappa$ and $\tilde{\bsk}_{-u} = 0$. Now suppose that if $\bsa, \bsb \in \real^s$ then,
\begin{enumerate}[(i)]
\item $\bsa \preceq \bsb$ if and only if $a_j \le b_j$ for all $1 \le j \le s$;
\item  $\bsa \prec \bsb$ if and only if $a_j \le b_j$ for all $1 \le j \le s$ with at least one strict inequality. 
\end{enumerate}
Using this notation we get,
\begin{align}
\label{eq:multi_form_W}
W &= \frac{1}{n\sigma_{sgn}} \sum_{i=1}^{n} \sum_{\tilde{\bsk} : \bzero \prec \tilde{\bsk}} \nu_{\tilde{\bsk}}(\bsx_i) = \frac{1}{n\sigma_{sgn}} \sum_{i=1}^{n} \sum_{\tilde{\bsk} \succeq \bzero, |\tilde{\bsk}| \ge m+1} \nu_{\tilde{\bsk}}(\bsx_i),
\end{align}
where the last equality follows from a consequence of the ANOVA decomposition and the definition of a $(0,m,s)$-scrambled geometric net.
Finally we define 
\begin{align}
\label{eq:Wtilde}
\tilde{W} =  \frac{1}{n\sigma_{sgn}} \sum_{i=1}^{n} \sum_{\tilde{\bsk} \succeq \tilde{m}\one , |\tilde{\bsk}| \ge m+1} \nu_{\tilde{\bsk}}(\bsx_i).
\end{align}
for $\tilde{m} = \floor{2s \log_b{m}}$. The next Lemma guarantees $W-\tilde{W}\rightarrow 0$ in probability and can be proved along the lines of Proposition 1 in Loh \citep{loh:2003}.
\begin{lemma}
Let $b \ge \max \{s,d,2\}$ and $f \in \mathcal{F}_s$. Then $\E( W - \tilde{W})^2 = O(\tilde{m}/m)$, where $W$ and $\tilde{W}$ are as in \eqref{eq:multi_form_W} and \eqref{eq:Wtilde} respectively. 
\end{lemma}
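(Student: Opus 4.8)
The plan is to bound $\E(W-\tilde W)^2$ by isolating exactly the multiresolution terms that are present in $W$ but absent from $\tilde W$, and then summing their variance contributions using the variance formula \eqref{var_form_original} together with the gain-coefficient bound of Lemma \ref{lem:lower_bound_gamma} and the $\sigma_{u,\kappa}^2$ estimate of Lemma \ref{sigmauk}. Concretely, $W-\tilde W = \frac{1}{n\sigma_{sgn}}\sum_{i}\sum_{\tilde\bsk}\nu_{\tilde\bsk}(\bsx_i)$ where the sum ranges over those $\tilde\bsk$ with $|\tilde\bsk|\ge m+1$, $\tilde\bsk\succeq\bzero$, but $\tilde\bsk\not\succeq\tilde m\one$ on its support — i.e.\ indices $u$ with $|u|>0$ and $\kappa$ having at least one coordinate $k_j<\tilde m$. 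Using orthogonality of the $\nu_{u\kappa}$ and \eqref{var_form_original}, we get $\E(W-\tilde W)^2 = \frac{1}{n\sigma_{sgn}^2}\sum_{(u,\kappa)\in\mathcal{B}}\Gamma_{u,\kappa}\sigma_{u,\kappa}^2$, where $\mathcal{B}$ is the ``bad'' index set just described.

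The key steps, in order, are: (1) write the difference explicitly in the multiresolution basis and identify $\mathcal{B}$ precisely; (2) invoke \eqref{var_form_original} and note $\Gamma_{u,\kappa}$ is uniformly bounded above (by \cite{snxs}, or simply by the crude bound $\Gamma_{u,\kappa}\le (b/(b-1))^{|u|}\le (b/(b-1))^s$, which is an absolute constant once $b\ge\max(s,2)$); (3) apply Lemma \ref{sigmauk}, from which $\sigma_{u,\kappa}^2 = O(b^{-|\kappa|(1+2/d)})$ uniformly (both the main quadratic-form term and the remainder are $O(b^{-2|\kappa|/d})$ times a bounded factor, and the extra $b^{-|u|}$ from the leading term together with the $\tilde A_u$ scaling gives the stated order — in any case $\sigma_{u,\kappa}^2 \le C b^{-|\kappa|} b^{-2|\kappa|/d}$); (4) carry out the combinatorial sum $\sum_{(u,\kappa)\in\mathcal{B}} b^{-|\kappa|(1+2/d)}$ and compare it to $\sigma_{sgn}^2 \asymp m^{s-1} n^{-1-2/d}$ from \eqref{eq:sigma_sgn}. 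For step (4), fix $u$ with $|u|=p$; the number of $\kappa\in\natu^p$ with $|\kappa|=r$ is $\binom{r+p-1}{p-1}=O(r^{p-1})$, so $\sum_{|\kappa|\ge m+1} b^{-r(1+2/d)}O(r^{p-1}) = O(m^{p-1} b^{-(m+1)(1+2/d)}) = O(m^{s-1}/n^{1+2/d})$ — the same order as $\sigma_{sgn}^2$. The savings of the factor $\tilde m/m$ comes from the constraint defining $\mathcal{B}$: requiring some $k_j < \tilde m$ (rather than all $k_j$ free) costs a factor of roughly $\tilde m/m$ in the count of admissible $\kappa$'s at each level, because one of the $p$ ``free'' coordinates is confined to a range of size $\tilde m$ out of $\sim m$. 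Summing over the at most $p$ choices of which coordinate is small, and over $p\le s$, yields $\sum_{(u,\kappa)\in\mathcal{B}} b^{-|\kappa|(1+2/d)} = O(\tilde m\, m^{s-2}/n^{1+2/d})$, and dividing by $n\sigma_{sgn}^2 \asymp m^{s-1}/n^{1+2/d}$ (times $n$, which cancels the $1/n$) gives $\E(W-\tilde W)^2 = O(\tilde m/m)$.

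The main obstacle is the careful bookkeeping in step (4): one must verify that restricting a single coordinate $k_j$ of $\kappa$ to lie below $\tilde m$ really does cost a factor $\Theta(\tilde m/m)$ in the level-$r$ count, uniformly over the relevant range $r\in[m+1,\infty)$, and that the dominant contribution still comes from $r$ near $m$ (which it does, since $b^{-r(1+2/d)}$ decays geometrically). A mild subtlety is that when $|u|=1$ the ``bad'' set forces $k_j<\tilde m$ while $|\kappa|=k_j\ge m+1$, which is impossible for large $m$, so singletons contribute nothing — consistent with the bound. One also needs the two-sided estimate $\sigma_{sgn}^2\asymp m^{s-1}/n^{1+2/d}$, for which the lower bound is exactly Theorem \ref{thm:lower-bound} (hypothesized here) and the upper bound is \eqref{up_bound} from \cite{basu2015scrambled}; with both in hand the normalization is pinned down and the ratio collapses to $O(\tilde m/m)$. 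Since $\tilde m = \floor{2s\log_b m} = o(m)$, this gives $\E(W-\tilde W)^2\to 0$, as claimed, and the argument is a direct transcription of Proposition 1 of \cite{loh:2003} with $b$-adic cells on $\cx^\otos$ replacing dyadic (or $b$-adic) intervals on $[0,1]^s$.
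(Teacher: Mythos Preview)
Your overall strategy matches the paper's (which simply defers to Proposition~1 of \cite{loh:2003}): express $W-\tilde W$ as a sum over the multiresolution terms outside the range of $\tilde W$, apply the variance formula \eqref{var_form_original}, bound $\Gamma_{u,\kappa}$ and $\sigma_{u,\kappa}^2$ from above, and divide by $n\sigma_{sgn}^2$ using the lower bound \eqref{eq:sigma_sgn}. However, your execution has two slips.

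First, in step (3) you assert $\sigma_{u,\kappa}^2 \le C b^{-|\kappa|(1+2/d)}$. This is too strong: the correct bound is $\sigma_{u,\kappa}^2 \le C b^{-2|\kappa|/d}$, as stated in \eqref{eq:up_bound_siguk} (or directly from Lemma~\ref{sigmauk}, noting that $\sum_\tau$ has $b^{|\kappa|}$ terms, which cancels the prefactor $b^{-|\kappa|}$, while the operator norm of $\tilde A_u$ is $O(b^{-2|\kappa|/d})$ by sphericity). Your own parenthetical concedes both pieces are $O(b^{-2|\kappa|/d})$, so the subsequent claim of an extra factor $b^{-|\kappa|}$ is unjustified. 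This error propagates to step (4), where your sum is off by a factor of $n$; the confused remark ``(times $n$, which cancels the $1/n$)'' is a symptom. Redo step (4) with $\sigma_{u,\kappa}^2=O(b^{-2|\kappa|/d})$: the unrestricted sum over $|\kappa|\ge m+1$ is $O(m^{s-1}b^{-2m/d})$, which has the same order as $n\sigma_{sgn}^2$ (not $\sigma_{sgn}^2$), and the bad-set restriction then cuts this by $\tilde m/m$ exactly as you argue.

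Second, you misidentify $\mathcal{B}$. The condition $\tilde\bsk\not\succeq\tilde m\one$ refers to the full $s$-vector $\tilde\bsk$, not to its support: since $\tilde\bsk_j=0$ for $j\notin u$, \emph{every} $(u,\kappa)$ with $|u|<s$ lies in $\mathcal{B}$, regardless of the sizes of $k_j$ for $j\in u$. In particular, singletons do contribute, contrary to your remark. Fortunately each stratum $|u|=p<s$ contributes only $O(m^{p-1}b^{-2m/d})/(n\sigma_{sgn}^2)=O(m^{p-s})\le O(1/m)$, which is absorbed into $O(\tilde m/m)$, so the conclusion survives once you include these terms.
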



Therefore, it suffices to show that under the assumptions of Theorem \ref{thm:convergence}, 
\begin{align*}
\sup \left\{ |\P(\tilde{W} \le w) - \Phi(w) | : -\infty < w < \infty \right\} = O \left( \left(\frac{\log_b(m)}{m}\right)^{1/2}\right).
\end{align*}
which as argued earlier can be achieved by suitably bounding $|\E(h_{t}^{'}(\tilde{W})-\tilde{W}h_{t}(\tilde{W}))|$ for $h_t$ as in Lemma \ref{lem:stein}. The necessary control over $|\E (h_{t}^{'}(\tilde{W})-\tilde{W}h_{t}(\tilde{W}))|$ will be obtained by the exchangeable pair technique of Stein's method.  This is done as follows.


 Pick $(I,J)$ uniformly from $\{1, \ldots, n\}\times\{1, \ldots, s\}$. Furthermore let,
\[
\{ \pi_{j\sumdot}^*, \pi_{j\sumdot a_{1}}^*, \ldots ,\pi_{j\sumdot a_{1}a_{2},\dots,a_{k}}^*, \ldots : 1\le j \le s,\;\; 0 \le a_k \le b-1, \;\;k = 1,2,\ldots 
\}
\]
be an independent replication of the array of $\pi$'s as introduced in Section \ref{sec:background}. In particular, these draws of $I,J, \pi^*$'s are made independently of each other as well as of all previously defined random variable. Now for $1 \le i \le n, 1 \le j \le s$, define
\begin{align*}
&\tilde{\pi}_{j\sumdot a_{i,j,1},\ldots, a_{i,j,k_j-1}} \\
&= 
\begin{cases}
\pi_{j\sumdot a_{i,j,1},\ldots, a_{i,j,k_j-1}}^*, & \text{if } J = j, \;\; k_j \ge \tilde{m} \text{ and }\\
& (a_{I,j,1}, \ldots , a_{I,j,\tilde{m} - 1}) = (a_{i,j,1}, \ldots , a_{i,j,\tilde{m} - 1}), \\
\pi_{j\sumdot a_{i,j,1},\ldots, a_{i,j,k_j-1}} & \text{otherwise},
\end{cases}
\end{align*}
where $a_{i,j,k}$'s are the corresponding bits for the $(0, m, s)$-net $\{\bsa_i : i = 1, \ldots, n\}$. Let $\tilde{\bsx_i}$ be the scrambled geometric net created using $\bsa_i$ and the permutations $\tilde{\pi}$. Similar to the definition in \eqref{eq:Wtilde} we define,
\begin{align*}
\tilde{W}^* =  \frac{1}{n\sigma_{sgn}} \sum_{i=1}^{n} \sum_{\tilde{\bsk} \succeq \tilde{m}\one , |\tilde{\bsk}| \ge m+1} \nu_{\tilde{\bsk}}(\tilde{\bsx}_i),
\end{align*}
where $\one$ denotes a vector of all ones of dimension $s$.
It is easy to see by our choice of $(I,J,\pi^{*})$, $(\tilde{W}, \tilde{W}^*)$ is an exchangeable pair of random variables. Now denote,
\begin{align*}
\tilde{S} &=  \frac{1}{n\sigma_{sgn}} \sum_{i=1}^{n} \sum_{\tilde{\bsk} \succeq \tilde{m}\one , |\tilde{\bsk}| \ge m+1}  \one(\Omega_{i,I,J,\tilde{m}})\nu_{\tilde{\bsk}}(\tilde{\bsx}_i),\\
S& =  \frac{1}{n\sigma_{sgn}} \sum_{i=1}^{n} \sum_{\tilde{\bsk} \succeq \tilde{m}\one , |\tilde{\bsk}| \ge m+1} \one(\Omega_{i,I,J,\tilde{m}})\nu_{\tilde{\bsk}}(\bsx_i),
\end{align*}
where $\Omega_{i,I,J,\tilde{m}}$ denotes the event that $(a_{I,J,1}, \ldots, a_{I,J,\tilde{m}-1}) = (a_{i,J,1}, \ldots, a_{i,J,\tilde{m}-1})$. Using this we write,
\begin{align*}
\tilde{W}^* - \tilde{W} &= \tilde{S} - S,\\
V &= \tilde{W} - S.
\end{align*}

Now, let $t \in \real$ and $h_t : \real \rightarrow \real$ be the unique bounded solution of $h^{'}(w) - w h(w) = \mathds{1}(w \le t) - \Phi(t)$. Using the fact that $(\tilde{W}, \tilde{W}^*)$ is exchangeable we have by arguments similar to Loh \cite{loh:2003},
\begin{align*}
0 &= \E\left((\tilde{W}^* - \tilde{W})[h_t(\tilde{W}) + h_t(\tilde{W}^*)]\right)\\
&= 2\E\left( h_t(\tilde{W}) \E(\tilde{W}^* - \tilde{W} \lvert \cw)\right) + \E\left((\tilde{W}^* - \tilde{W})[h_t(\tilde{W}^*) - h_t(\tilde{W})]\right),
\end{align*} 
where $\cw$ denote the $\sigma$-algebra generated by the random variables
\begin{align*}
\{ \pi_{j\sumdot}(a_{i,j,1}), \pi_{j\sumdot a_{i,j,1}}(a_{i,j,2}), \pi_{j\sumdot a_{i,j,1}, a_{i,j,2}}(a_{i,j,3}), \ldots \;: 1\le i \le n, \; 1 \le j \le s\}.
\end{align*}
Using arguments similar to Proposition 2 from \cite{loh:2003} we have
\begin{align}
\label{eq:expWgzW}
\E\left(\tilde{W} h_t(\tilde{W})\right) &= \frac{b^{\tilde{m} -1}}{2}\E\left((\tilde{W}^* - \tilde{W})[h_t(\tilde{W}^*) - h_t(\tilde{W})]\right) \nonumber \\
&= \E\left( \int h_t^{'}(V+ w) K_{\tilde{W},\tilde{W}^*}(w) \rd w\right),
\end{align}
where for all $w \in \real$,
\begin{align*}
K_{\tilde{W},\tilde{W}^*}(w) = \begin{cases}
\frac{b^{\tilde{m} -1}}{2}(\tilde{W}^* - \tilde{W}), & \text{ if } S < w \le \tilde{S},\\
\frac{b^{\tilde{m} -1}}{2}(\tilde{W} - \tilde{W}^*), & \text{ if } \tilde{S} < w \le S,\\
0 & \text{otherwise}.
\end{cases}
\end{align*}
Now from Lemma \ref{lem:stein} and \eqref{eq:expWgzW} we have,
\begin{align*}
\P(\tilde{W} \le t) - \Phi(t) &= \E\left(h_t^{'}(\tilde{W}) - \tilde{W}h_t(\tilde{W})\right)\\
&= \E \left( \int [h_t^{'}(\tilde{W}) - h_t^{'}(V+w)]K_{\tilde{W},\tilde{W}^*}(w) \rd w \right) \\
& \;\;\; + \E\left(h_t^{'}(\tilde{W})\right) E\left(\int K_{\tilde{W},\tilde{W}^*}(w) \rd w\right)\\
& \;\;\; - \E\left(h_t^{'}(\tilde{W}) \int K_{\tilde{W},\tilde{W}^*}(w) \rd w\right)\\
& \;\;\; + \E\left(h_t^{'}(\tilde{W})\right) \left( 1 - \E \int K_{\tilde{W},\tilde{W}^*}(w) \rd w\right).
\end{align*}
The proof follows by appropriately bounding the terms on the right hand side of the above equation. Propositions 3-5 of \cite{loh:2003} gives the appropriate bound for each of the terms when underlying point set is a $(0,m,s)$-scrambled digital net. Following the proofs in \cite{loh:2003} it is can be seen that the properties of a scrambled net are used to only prove Lemma 3, which is a supporting lemma to Proposition 5. 

Hence, to carry over the argument to $(0,m,s)$-scrambled geometric nets we prove a corresponding version of Lemma 3 of \cite{loh:2003} as Lemma \ref{lem:cor_lemma_3} in the Appendix \ref{app_thm}.
Finally, using Lemma \ref{lem:cor_lemma_3} and Propositions 3-5 in \cite{loh:2003} we have as $m \rightarrow \infty$,
\begin{align*}
\sup \left\{ |P(\tilde{W} \le w) - \Phi(w) | : -\infty < w < \infty \right\} &= O\left(\left(\frac{\tilde{m}}{m}\right)^{1/2}\right) + O\left(b^{-\tilde{m}/3}\right) \\
&= O \left( \left(\frac{\log_b(m)}{m}\right)^{1/2}\right).
\end{align*}
This completes the proof of Theorem \ref{thm:convergence}.

\begin{remark} Note that the above result does not provide a rate of convergence to normality for $W$. As in \cite{loh:2003}, it only gives a rate of convergence for $\tilde{W}$.
\end{remark}

\section{Discussion}
Our results on lower bound on variance and thereafter proof of asymptotic normality of scrambled geometric net quadrature are obtained modulo certain smoothness assumptions on the function and properties of the domain $\cx$. The properties of the domain enter crucially in suitably bounding the smallest eigenvalue of a nonnegative definite matrix away from zero. It is an interesting open question to understand whether one can characterize spaces for which such a condition holds. In particular, the case when $\cx$ is a disk, then the adaptive partitioning scheme in base $b=2$, as suggested by Basu and Owen \cite{basu2015scrambled}, fails to satisfy the lower bound on the eigenvalue. However, as we noted in the example provided in Remark 3 following Theorem 1, the lower bound on the eigenvalue is not necessary for the lower bound on $\sigma_{sgn}^2$ to hold. Therefore, in order to prove a desired lower bound on the variance, one needs to use the explicit spectral decomposition of matrix $A_j$ arising in Theorem 1 for the case of a disk, before performing subsequent analysis. Although our simulation results show such a lower bound on $\sigma^2_{sgn}$ to be true, the exact theoretical analysis is cumbersome.

On the other hand, the parallel adaptive partitioning scheme for $b=4$, does not satisfy the sphericity condition used to prove the upper bound in \cite{basu2015scrambled}. Therefore, we do not proceed to prove a lower bound in this case. This interesting dependence of the problem on the base $b$ used for the construction of the scrambled geometric net, makes us believe that the study of the disk deserves separate special attention. Another example considered in Basu and Owen [2015], is that of a spherical triangle. For the spherical triangle, we believe at an intuitive level, that the condition on the eigenvalue, as required by Theorem 1, holds while using base $b=4$. Indeed, it is not too difficult to show that it is enough to have the condition on eigenvalue in Theorem  to hold for sufficiently large $k_j$ for $j=1,\ldots, s$. However, when $k_j$ is large enough, a split in the spherical triangle ``resembles" a triangle in the plane, and intuitively the required bound on the eigenvalues of the corresponding matrix can be obtained by suitable perturbation bounds for matrix eigenvalues. Apart from conditions on the domains considered, our smoothness assumptions on the function $f$ are not necessarily sharp. Bridging these gaps are goals of our future research.

\section*{Acknowledgement}
We would like to sincerely thank Prof. Art Owen for his support and discussions. We would also like to thank the anonymous referees whose comments have improved the paper substantially.

\bibliographystyle{imsart-nameyear}
\bibliography{qmc}

\providecommand{\noopsort}[1]{}
\begin{thebibliography}{23}

\bibitem[\protect\citeauthoryear{Arvo et~al.}{2001}]{carlo2001state}
\begin{binproceedings}[author]
\bauthor{\bsnm{Arvo},~\bfnm{J.}\binits{J.}},
  \bauthor{\bsnm{Fajardo},~\bfnm{P.}\binits{P.} \bsuffix{M.~Hanrahan}},
  \bauthor{\bsnm{Jensen},~\bfnm{H.~W.}\binits{H.~W.}},
  \bauthor{\bsnm{Mitchell},~\bfnm{D.}\binits{D.}},
  \bauthor{\bsnm{Pharr},~\bfnm{M.}\binits{M.}} \AND
  \bauthor{\bsnm{Shirley},~\bfnm{P.}\binits{P.}}
(\byear{2001}).
\btitle{State of the Art in {Monte Carlo} Ray Tracing for Realistic Image
  Synthesis}.
In \bbooktitle{ACM Siggraph 2001}.
\bpublisher{ACM}, \baddress{New York}.
\end{binproceedings}
\endbibitem

\bibitem[\protect\citeauthoryear{Barbour and
  Chen}{2005}]{barbour2005introduction}
\begin{bbook}[author]
\bauthor{\bsnm{Barbour},~\bfnm{Andrew~D}\binits{A.~D.}} \AND
  \bauthor{\bsnm{Chen},~\bfnm{Louis Hsiao~Yun}\binits{L.~H.~Y.}}
(\byear{2005}).
\btitle{An introduction to Stein's method}
\bvolume{4}.
\bpublisher{World Scientific}.
\end{bbook}
\endbibitem

\bibitem[\protect\citeauthoryear{Basu}{2015}]{qmc:trac:simplices}
\begin{barticle}[author]
\bauthor{\bsnm{Basu},~\bfnm{K.}\binits{K.}}
(\byear{2015}).
\btitle{{Quasi-Monte Carlo} tractability of high dimensional integration over
  products of simplices}.
\bjournal{Journal of Complexity}
\bvolume{31}
\bpages{817–-834}.
\end{barticle}
\endbibitem

\bibitem[\protect\citeauthoryear{Basu and Owen}{2015a}]{basu2015scrambled}
\begin{barticle}[author]
\bauthor{\bsnm{Basu},~\bfnm{Kinjal}\binits{K.}} \AND
  \bauthor{\bsnm{Owen},~\bfnm{Art~B.}\binits{A.~B.}}
(\byear{2015}a).
\btitle{Scrambled Geometric Net Integration Over General Product Spaces}.
\bjournal{Foundations of Computational Mathematics}
\bpages{1-30}.
\bnote{To appear}.
\bdoi{10.1007/s10208-015-9293-5}
\end{barticle}
\endbibitem

\bibitem[\protect\citeauthoryear{Basu and Owen}{2015b}]{Basu2014}
\begin{barticle}[author]
\bauthor{\bsnm{Basu},~\bfnm{Kinjal}\binits{K.}} \AND
  \bauthor{\bsnm{Owen},~\bfnm{Art~B.}\binits{A.~B.}}
(\byear{2015}b).
\btitle{{Low-discrepancy constructions in the triangle}}.
\bjournal{SIAM Journal of Numerical Analysis}
\bvolume{53}
\bpages{743-761}.
\end{barticle}
\endbibitem

\bibitem[\protect\citeauthoryear{Basu and Owen}{2016}]{basu:owen:2016}
\begin{barticle}[author]
\bauthor{\bsnm{Basu},~\bfnm{Kinjal}\binits{K.}} \AND
  \bauthor{\bsnm{Owen},~\bfnm{Art.~B.}\binits{A.~B.}}
(\byear{2016}).
\btitle{Transformations and {H}ardy-{K}rause variation}.
\bjournal{SIAM Journal of Numerical Analysis}.
\bnote{To appear. arXiv:1512.02713}.
\end{barticle}
\endbibitem

\bibitem[\protect\citeauthoryear{Brandolini
  et~al.}{2013}]{bran:colz:giga:trav:2013}
\begin{bincollection}[author]
\bauthor{\bsnm{Brandolini},~\bfnm{L.}\binits{L.}},
  \bauthor{\bsnm{Colzani},~\bfnm{L.}\binits{L.}},
  \bauthor{\bsnm{Gigante},~\bfnm{G.}\binits{G.}} \AND
  \bauthor{\bsnm{Travaglini},~\bfnm{G.}\binits{G.}}
(\byear{2013}).
\btitle{A {Koksma--Hlawka} Inequality for Simplices}.
In \bbooktitle{Trends in Harmonic Analysis}
\bpages{33--46}.
\bpublisher{Springer}.
\end{bincollection}
\endbibitem

\bibitem[\protect\citeauthoryear{Dick and
  Pillichshammer}{2010}]{dick:pill:2010}
\begin{bbook}[author]
\bauthor{\bsnm{Dick},~\bfnm{J.}\binits{J.}} \AND
  \bauthor{\bsnm{Pillichshammer},~\bfnm{F.}\binits{F.}}
(\byear{2010}).
\btitle{Digital sequences, discrepancy and quasi-{Monte Carlo} integration}.
\bpublisher{Cambridge University Press}, \baddress{Cambridge}.
\end{bbook}
\endbibitem

\bibitem[\protect\citeauthoryear{Keller}{2013}]{kell:2013}
\begin{bincollection}[author]
\bauthor{\bsnm{Keller},~\bfnm{A.}\binits{A.}}
(\byear{2013}).
\btitle{{Quasi-Monte Carlo} Image Synthesis in a Nutshell}.
In \bbooktitle{Monte Carlo and Quasi-Monte Carlo Methods 2012},
(\beditor{\bfnm{J.}\binits{J.}~\bsnm{Dick}},
  \beditor{\bfnm{F.~Y.}\binits{F.~Y.}~\bsnm{Kuo}},
  \beditor{\bfnm{G.~W.}\binits{G.~W.}~\bsnm{Peters}} \AND
  \beditor{\bfnm{I.~H.}\binits{I.~H.}~\bsnm{Sloan}}, eds.).
\bseries{Springer Proceedings in Mathematics \& Statistics}
\bvolume{65}
\bpages{213--249}.
\bpublisher{Springer}, \baddress{Berlin}.
\end{bincollection}
\endbibitem

\bibitem[\protect\citeauthoryear{L'Ecuyer and Lemieux}{2002}]{lecu:lemi:2002}
\begin{binproceedings}[author]
\bauthor{\bsnm{L'Ecuyer},~\bfnm{P.}\binits{P.}} \AND
  \bauthor{\bsnm{Lemieux},~\bfnm{C.}\binits{C.}}
(\byear{2002}).
\btitle{A Survey of Randomized Quasi-{M}onte {C}arlo Methods}.
In \bbooktitle{Modeling Uncertainty: An Examination of Stochastic Theory,
  Methods, and Applications}
(\beditor{\bfnm{M.}\binits{M.}~\bsnm{Dror}},
  \beditor{\bfnm{P.}\binits{P.}~\bsnm{L'Ecuyer}} \AND
  \beditor{\bfnm{F.}\binits{F.}~\bsnm{Szidarovszki}}, eds.)
\bpages{419--474}.
\bpublisher{Kluwer Academic Publishers}.
\end{binproceedings}
\endbibitem

\bibitem[\protect\citeauthoryear{Loh}{2003}]{loh:2003}
\begin{barticle}[author]
\bauthor{\bsnm{Loh},~\bfnm{W.~L.}\binits{W.~L.}}
(\byear{2003}).
\btitle{On the asymptotic distribution of scrambled net quadrature}.
\bjournal{Annals of Statistics}
\bvolume{31}
\bpages{1282--1324}.
\end{barticle}
\endbibitem

\bibitem[\protect\citeauthoryear{Niederreiter}{1987}]{nied87}
\begin{barticle}[author]
\bauthor{\bsnm{Niederreiter},~\bfnm{H.}\binits{H.}}
(\byear{1987}).
\btitle{Point sets and sequences with small discrepancy}.
\bjournal{Monatshefte fur mathematik}
\bvolume{104}
\bpages{273--337}.
\end{barticle}
\endbibitem

\bibitem[\protect\citeauthoryear{Owen}{1995}]{rtms}
\begin{binproceedings}[author]
\bauthor{\bsnm{Owen},~\bfnm{A.~B.}\binits{A.~B.}}
(\byear{1995}).
\btitle{Randomly Permuted $(t,m,s)$-Nets and $(t,s)$-Sequences}.
In \bbooktitle{Monte Carlo and Quasi-Monte Carlo Methods in Scientific
  Computing}
(\beditor{\bfnm{H.}\binits{H.}~\bsnm{Niederreiter}} \AND
  \beditor{\bfnm{P.~Jau-Shyong}\binits{P.~J.-S.}~\bsnm{Shiue}}, eds.)
\bpages{299--317}.
\bpublisher{Springer-Verlag}, \baddress{New York}.
\end{binproceedings}
\endbibitem

\bibitem[\protect\citeauthoryear{Owen}{1997a}]{owensinum}
\begin{barticle}[author]
\bauthor{\bsnm{Owen},~\bfnm{A.~B.}\binits{A.~B.}}
(\byear{1997}a).
\btitle{{Monte Carlo} Variance of Scrambled Equidistribution Quadrature}.
\bjournal{SIAM Journal of Numerical Analysis}
\bvolume{34}
\bpages{1884--1910}.
\end{barticle}
\endbibitem

\bibitem[\protect\citeauthoryear{Owen}{1997b}]{smoovar}
\begin{barticle}[author]
\bauthor{\bsnm{Owen},~\bfnm{A.~B.}\binits{A.~B.}}
(\byear{1997}b).
\btitle{Scrambled Net Variance for Integrals of Smooth Functions}.
\bjournal{Annals of Statistics}
\bvolume{25}
\bpages{1541--1562}.
\end{barticle}
\endbibitem

\bibitem[\protect\citeauthoryear{Owen}{1998}]{snxs}
\begin{barticle}[author]
\bauthor{\bsnm{Owen},~\bfnm{A.~B.}\binits{A.~B.}}
(\byear{1998}).
\btitle{Scrambling {S}obol' and {N}iederreiter-{X}ing points}.
\bjournal{Journal of Complexity}
\bvolume{14}
\bpages{466-489}.
\end{barticle}
\endbibitem

\bibitem[\protect\citeauthoryear{Owen}{2002}]{owen2002necessity}
\begin{btechreport}[author]
\bauthor{\bsnm{Owen},~\bfnm{Art~B}\binits{A.~B.}}
(\byear{2002}).
\btitle{Necessity of low effective dimension}
\btype{Technical Report},
\bpublisher{Stanford University},
\baddress{Department of Statistics}.
\end{btechreport}
\endbibitem

\bibitem[\protect\citeauthoryear{Owen}{2003}]{altscram}
\begin{barticle}[author]
\bauthor{\bsnm{Owen},~\bfnm{A.~B.}\binits{A.~B.}}
(\byear{2003}).
\btitle{Variance with alternative scramblings of digital nets}.
\bjournal{ACM Transactions on Modeling and Computer Simulation}
\bvolume{13}
\bpages{363--378}.
\end{barticle}
\endbibitem

\bibitem[\protect\citeauthoryear{Owen}{2008}]{owen2008local}
\begin{barticle}[author]
\bauthor{\bsnm{Owen},~\bfnm{Art~B}\binits{A.~B.}}
(\byear{2008}).
\btitle{Local antithetic sampling with scrambled nets}.
\bjournal{The Annals of Statistics}
\bvolume{36}
\bpages{2319--2343}.
\end{barticle}
\endbibitem

\bibitem[\protect\citeauthoryear{Pillards and
  Cools}{2004}]{pillards2004theoretical}
\begin{barticle}[author]
\bauthor{\bsnm{Pillards},~\bfnm{Tim}\binits{T.}} \AND
  \bauthor{\bsnm{Cools},~\bfnm{Ronald}\binits{R.}}
(\byear{2004}).
\btitle{A theoretical view on transforming low-discrepancy sequences from a
  cube to a simplex.}
\bjournal{Monte Carlo Meth. and Appl.}
\bvolume{10}
\bpages{511--529}.
\end{barticle}
\endbibitem

\bibitem[\protect\citeauthoryear{Pillards and Cools}{2005}]{pill:cool:2005}
\begin{barticle}[author]
\bauthor{\bsnm{Pillards},~\bfnm{T.}\binits{T.}} \AND
  \bauthor{\bsnm{Cools},~\bfnm{R.}\binits{R.}}
(\byear{2005}).
\btitle{Transforming low-discrepancy sequences from a cube to a simplex}.
\bjournal{Journal of computational and applied mathematics}
\bvolume{174}
\bpages{29--42}.
\end{barticle}
\endbibitem

\bibitem[\protect\citeauthoryear{Stein}{1972}]{stein1972}
\begin{binproceedings}[author]
\bauthor{\bsnm{Stein},~\bfnm{Charles}\binits{C.}}
(\byear{1972}).
\btitle{A bound for the error in the normal approximation to the distribution
  of a sum of dependent random variables}.
In \bbooktitle{Proceedings of the Sixth Berkeley Symposium on Mathematical
  Statistics and Probability, Volume 2: Probability Theory}
\bpages{583--602}.
\bpublisher{University of California Press}, \baddress{Berkeley, Calif.}
\end{binproceedings}
\endbibitem

\bibitem[\protect\citeauthoryear{Stromberg}{1994}]{stro:1994}
\begin{bbook}[author]
\bauthor{\bsnm{Stromberg},~\bfnm{K.~R.}\binits{K.~R.}}
(\byear{1994}).
\btitle{Probability for analysts}.
\bpublisher{Chapman \& Hall}, \baddress{New York}.
\end{bbook}
\endbibitem

\end{thebibliography}

\appendix

\section{Lower Bound on Gain Coefficients}\label{app_lb_gain}
There has been some lower bound results in an unpublished report by Owen \cite{owen2002necessity} which we formalize here in Lemmas \ref{odd_growth} and \ref{even_growth}. From the results in \cite{owensinum} it is known that $\Gamma_{u,\kappa} = 0$ when $|u| + |\kappa|  \le m - t$ and $\Gamma_{u,\kappa} = 1$ for $|\kappa| \ge m$. Since we are working with the $(0, m, s)$-net it is known that
\begin{align}
\label{gamma_notation}
\Gamma_{u,k} =  1 + (1- b)^{-u} \left [ (-b)^{m-k}{u-1 \choose m-k}  - \sum_{j=0}^{m-k}{u \choose j}(-b)^j\right].
\end{align}
From here on end, we replace $|u|$ and $|\kappa|$ by $u$ and $k$ respectively in the expression for $\Gamma$. This simplifies the expression and there is no loss of information since, $\Gamma_{u,\kappa}$ depends only on the cardinality of $u$ and the sum of the components of $\kappa$.

\subsection{Supporting Lemmas}\label{app_lemmas}

The proof of the Lemma \ref{lem:lower_bound_gamma} depends on the following two key important observations. If $a$ and $0 \le r \le u$ are integers with $a+r$ even then,
\begin{align}
\label{even_obs}
0 \le {u \choose r} b^r - {u \choose r-1}b^{r-1} \le (-1)^a \sum_{j=0}^r {u \choose j} (-b)^j \le {u \choose r}b^j.
\end{align}
Similarly, if $a+r$ is odd with $1 \le r \le u$
\begin{align}
\label{odd_obs}
0 \ge -{u \choose r} b^r + {u \choose r-1}b^{r-1} \ge (-1)^a \sum_{j=0}^r {u \choose j} (-b)^j \ge -{u \choose r}b^j.
\end{align}
To prove a lower bound on $\Gamma_{u,k}$ we begin with a few lemmas.

\begin{lemma}
\label{odd_growth}
Let $L$ be any positive integer and $u = m - k + 2L -1$. Then, under the above notation, 
$$\Gamma_{m-k+2L - 1,k} \ge 1.$$ 
\end{lemma}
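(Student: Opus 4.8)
The plan is to convert the truncated binomial sum in \eqref{gamma_notation} into an integral, substitute it back, and exploit the parity of $u$ relative to $m-k$ to collapse all signs, arriving at a closed form of the shape $1 + (\text{manifestly nonnegative})$. Throughout write $r := m-k$ and $u := r+2L-1$, so that $u-r-1 = 2L-2$ is a \emph{nonnegative even} integer and $u+r = 2r+2L-1$ is \emph{odd}; these two parity facts are what make the argument work. The degenerate case $r=0$ forces $k=m$, where \eqref{gamma_notation} gives $\Gamma_{u,k}=1$ directly, so I assume $r\ge 1$.

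First I would apply Taylor's formula with integral remainder to the polynomial $g(x)=(1-x)^u$ expanded to order $r$ about $0$ (legitimate since $r\le u-1$, which is exactly where $L\ge 1$ is used). Using $g^{(r+1)}(t) = (-1)^{r+1}\frac{u!}{(u-r-1)!}(1-t)^{u-r-1}$ and the identity $\frac{u!}{r!(u-r-1)!}=u\binom{u-1}{r}$, this yields
\begin{align*}
\sum_{j=0}^{r}\binom{u}{j}(-x)^j \;=\; (1-x)^u + (-1)^r\, u\binom{u-1}{r}\int_0^x (x-t)^r(1-t)^{u-r-1}\,\mathrm{d}t.
\end{align*}
Evaluating at $x=b$ and plugging into \eqref{gamma_notation}, the contribution $(1-b)^{-u}(1-b)^u$ produces a $-1$ that cancels the leading $1$, while writing $(1-b)^{-u}=(-1)^u(b-1)^{-u}$ and using that $u+r$ is odd (so $(-1)^{u+r}=-1$) collapses every remaining sign, leaving the clean identity
\begin{align*}
\Gamma_{u,k} \;=\; (b-1)^{-u}\binom{u-1}{r}\bigl(u\,I - b^r\bigr),
\qquad I := \int_0^b (b-t)^r(1-t)^{2L-2}\,\mathrm{d}t .
\end{align*}

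Next I would analyze $uI-b^r$. The substitution $v=b-t$ gives $I=\int_0^b v^r\bigl(v-(b-1)\bigr)^{2L-2}\,\mathrm{d}v$, and since $2L-2$ is even the integrand is nonnegative on all of $[0,b]$. Integrating by parts (differentiating $v^r$, integrating $(v-(b-1))^{2L-2}$, the boundary term at $v=b$ being $b^r/(2L-1)$ because $b-(b-1)=1$) and regrouping $u=r+(2L-1)$ yields $u I - b^r = r(b-1)\int_0^b v^{r-1}\bigl(v-(b-1)\bigr)^{2L-2}\,\mathrm{d}v$. Splitting $[0,b]=[0,b-1]\cup[b-1,b]$, the piece over $[0,b-1]$ is a Beta integral, $\int_0^{b-1} v^{r-1}\bigl((b-1)-v\bigr)^{2L-2}\,\mathrm{d}v = (b-1)^{u-1}\frac{(r-1)!\,(2L-2)!}{(u-1)!}$, and the elementary identity $\binom{u-1}{r}\,r\,\frac{(r-1)!\,(2L-2)!}{(u-1)!}=1$ shows it contributes exactly $(b-1)^{u}$ to $\binom{u-1}{r}(uI-b^r)$. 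Hence
\begin{align*}
\Gamma_{u,k} \;=\; 1 + \binom{u-1}{r}\, r\,(b-1)^{1-u}\int_{b-1}^{b} v^{r-1}\bigl(v-(b-1)\bigr)^{2L-2}\,\mathrm{d}v \;\ge\; 1,
\end{align*}
since on $[b-1,b]$ we have $v\ge b-1\ge 1$, $v-(b-1)\ge 0$, and the exponents $r-1\ge 0$, $2L-2\ge 0$, so the integrand is nonnegative.

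The one genuinely delicate part is the sign/parity bookkeeping together with verifying that the Beta integral over $[0,b-1]$ reproduces \emph{precisely} $(b-1)^u$ so that the ``$1$'' separates off exactly; everything else is routine calculus. As a safeguard I would check the base case $L=1$ separately, where $I=\frac{b^{r+1}-(b-1)^{r+1}}{r+1}$ and the identity reduces to $\Gamma_{r+1,k}=\bigl(\tfrac{b}{b-1}\bigr)^{r}\ge 1$, which can also be read off directly from \eqref{gamma_notation} and serves as a consistency check on all the sign manipulations above.
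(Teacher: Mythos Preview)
Your proof is correct and takes a genuinely different route from the paper's.

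The paper argues directly from the alternating–sum inequalities \eqref{even_obs}–\eqref{odd_obs}: rewriting \eqref{gamma_notation} with $(1-b)^{-u}=(-1)^u(b-1)^{-u}$, using that $u+1+(m-k)$ is even to bound $(-1)^{u+1}\sum_{j=0}^{m-k}\binom{u}{j}(-b)^j$ from below, and then simplifying via Pascal's identity to obtain
\[
\Gamma_{u,k}\ \ge\ 1+(b-1)^{-u}b^{m-k-1}\binom{u-1}{m-k-1}\Bigl(b-\tfrac{u}{2L}\Bigr),
\]
which is $\ge 1$ because $b\ge s\ge u$ and $L\ge 1$. By contrast, you replace the truncated sum by its Taylor integral remainder, reduce $\Gamma_{u,k}$ to $(b-1)^{-u}\binom{u-1}{r}(uI-b^r)$, and then integrate by parts and evaluate a Beta integral to split off exactly~$1$. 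What your approach buys is an \emph{identity}
\[
\Gamma_{u,k}\ =\ 1+\binom{u-1}{r}\,r\,(b-1)^{1-u}\int_{b-1}^{b} v^{r-1}\bigl(v-(b-1)\bigr)^{2L-2}\,\mathrm{d}v
\]
with a manifestly nonnegative correction term, and in particular the conclusion $\Gamma_{u,k}\ge 1$ holds for every $b\ge 2$ without invoking the standing hypothesis $b\ge s$ that the paper's inequality needs at the last step. The paper's argument is more elementary (no integrals, just combinatorial inequalities) but only yields a lower bound and leans on $b\ge u$; yours is slightly more analytic but sharper and more general. Your handling of the boundary case $r=0$ separately is necessary, since the integration-by-parts boundary term at $v=0$ does not vanish there, and your $L=1$ sanity check $\Gamma_{r+1,k}=(b/(b-1))^r$ matches both the closed form and a direct evaluation of \eqref{gamma_notation}.
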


\begin{proof}
Let $u = m - k + 2L - 1$ for $L \ge 1$. From \eqref{gamma_notation}
\begin{align*}
\Gamma_{u,k} &=  1 + (1- b)^{-u} \left [ (-b)^{m-k}{u-1 \choose m-k}  - \sum_{j=0}^{m-k}{u \choose j}(-b)^j\right]\\
&=  1 + (b - 1)^{-u} \left [ (-1)^{m-k+u}b^{m-k}{u-1 \choose m-k}  + (-1)^{u+1} \sum_{j=0}^{m-k}{u \choose j}(-b)^j\right]\\
&\ge 1 + (b - 1)^{-u} \left [ -b^{m-k}{u-1 \choose m-k}  + {u \choose m- k} b^{m-k} - {u \choose m-k-1}b^{m-k-1}\right],
\end{align*}
where the last line follows from \eqref{even_obs} since $u+1+m-k$ is even. Simplifying the above using Pascal's identity, we get
\begin{align*}
\Gamma_{u,k} &\ge  1 + (b-1)^{-u}b^{m-k-1} {u - 1 \choose m-k-1} \left(b - \frac{u}{2L}\right) \\
&\ge 1 + (b-1)^{-u}b^{m-k-1} {u - 1 \choose m-k-1} \left(b - \frac{b}{2}\right) \\
&\ge 1,
\end{align*}
 where the second inequality follows from $b\ge s \ge u$ and $L \ge 1$.
\end{proof}

\begin{lemma}
\label{even_growth}
Let $L$ be any positive integer and $u = m - k + 2L$. Then, under the above notation, 
$$\Gamma_{u,k} \le \Gamma_{u+2,k}.$$
\end{lemma}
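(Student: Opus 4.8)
\textbf{Proof proposal for Lemma \ref{even_growth}.}

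The plan is to argue directly from the closed form \eqref{gamma_notation}, in the spirit of the proof of Lemma \ref{odd_growth}. Write $p := m-k$ and $X_v := (-b)^{p}\binom{v-1}{p} - \sum_{j=0}^{p}\binom{v}{j}(-b)^{j}$, so that $\Gamma_{v,k} = 1 + (1-b)^{-v}X_v$ and
\[
\Gamma_{u+2,k} - \Gamma_{u,k} = (1-b)^{-(u+2)}\bigl[X_{u+2} - (b-1)^{2}X_u\bigr] =: (1-b)^{-(u+2)}D .
\]
Since $(1-b)^{-(u+2)} = (-1)^{u}(b-1)^{-(u+2)}$ with $(b-1)^{-(u+2)}>0$, the claim $\Gamma_{u,k}\le\Gamma_{u+2,k}$ is equivalent to $(-1)^{u}D\ge 0$. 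One uses throughout that $u = m-k+2L = p+2L$, so $u+p$ is even; this is the same parity bookkeeping that appears in the proof of Lemma \ref{odd_growth}, and it is exactly what makes $(-1)^{u}(-b)^{p}=b^{p}\ge 0$.

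The next step is to resolve the increment $v\mapsto v+2$ by two applications of Pascal's identity, $\binom{u+2}{j} = \binom{u}{j}+2\binom{u}{j-1}+\binom{u}{j-2}$ and $\binom{u+1}{p} = \binom{u-1}{p}+2\binom{u-1}{p-1}+\binom{u-1}{p-2}$, together with $(b-1)^{2}=b^{2}-2b+1$. After expanding, the $\binom{u}{j}$ and $\binom{u-1}{p}$ terms combine into $\pm b(b-2)$ times single binomials, and (after reindexing the shifted sums $\sum_j\binom{u}{j-1}(-b)^j$ and $\sum_j\binom{u}{j-2}(-b)^j$) $D$ becomes a combination of $b^{p}$, $\binom{u-1}{p-1}$, $\binom{u-1}{p-2}$ and the truncated alternating sums $\sum_{i=0}^{r}\binom{u}{i}(-b)^{i}$ for $r\in\{p,p-1,p-2\}$. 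I would then bound each of these sums by \eqref{even_obs} when $r\in\{p,p-2\}$ (where $u+r$ is even) and by \eqref{odd_obs} when $r=p-1$ (where $u+r$ is odd), picking in each case the crude one-term bound $\binom{u}{r}b^{r}$ or the sharp two-term bound $\binom{u}{r}b^{r}-\binom{u}{r-1}b^{r-1}$ according to the sign with which that sum enters $(-1)^{u}D$. Consolidating the resulting binomial ratios via Pascal's identity — exactly as the proof of Lemma \ref{odd_growth} extracts a factor $b-u/(2L)$ — should reduce the problem to $(-1)^{u}D \ge b^{p}\bigl[(b-1)(b-2)\binom{u-1}{p-1} - b\binom{u-1}{p-2} - \binom{u-1}{p-3}\bigr]$. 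Since $u=p+2L$ gives $\binom{u-1}{p-2}/\binom{u-1}{p-1} = (p-1)/(2L+1)$ and $\binom{u-1}{p-3}/\binom{u-1}{p-1} = (p-1)(p-2)/\bigl((2L+1)(2L+2)\bigr)$, and since $p\le u-2\le b-2$ (the standing assumptions give $b\ge\max(s,2)$ and $u\le s$) and $L\ge 1$, this bracket is bounded below by $\binom{u-1}{p-1}\cdot(7b^{2}-17b+12)/12$, which is positive for all $b$; hence $(-1)^{u}D\ge 0$.

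I expect the main obstacle to be the bookkeeping rather than any individual estimate: one must carry signs correctly through the reindexing of the three shifted sums, and then verify that after the \eqref{even_obs}/\eqref{odd_obs} substitutions the leftover boundary binomials recombine — again by repeated use of Pascal's identity — into the single nonnegative expression above. A second point worth stressing is that the inequality is genuinely not term-by-term: the summand $(1-b)^{-u}(-b)^{p}\binom{u-1}{p}$ of $\Gamma_{u,k}$ in fact \emph{decreases} under $u\mapsto u+2$, so one cannot split $D$ into separately signed pieces, and the positive part must be shown to dominate only in aggregate, after the telescoping via Pascal's identity. Once $(-1)^{u}D\ge 0$ is in hand, the lemma follows immediately.
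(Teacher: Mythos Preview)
Your proposal is correct and uses the same toolkit as the paper's proof: the closed form \eqref{gamma_notation}, the parity observations \eqref{even_obs}/\eqref{odd_obs}, Pascal's identity, and the standing hypothesis $b\ge s\ge u$. The claimed reduction to $(-1)^uD\ge b^{p}\bigl[(b-1)(b-2)\binom{u-1}{p-1}-b\binom{u-1}{p-2}-\binom{u-1}{p-3}\bigr]$ does go through once the bookkeeping is carried out, and your final polynomial $7b^{2}-17b+12$ is indeed positive for all real $b$.

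The paper takes a slightly shorter path. Rather than expanding $\binom{u+2}{j}$ by a double Pascal identity and thereby splitting the alternating sum in $X_{u+2}$ into three sums $S_p,S_{p-1},S_{p-2}$, the paper keeps both alternating sums intact and applies \eqref{odd_obs} directly to $(-1)^{u+1}\sum_{j=0}^{m-k}\binom{u}{j}(-b)^{j}$ and \eqref{even_obs} directly to $(-1)^{u}\sum_{j=0}^{m-k}\binom{u+2}{j}(-b)^{j}$. This produces only four binomial terms at once; a short Pascal consolidation then reduces the inequality to checking that $b(b+1)-8(b-1)^{2}\le 0$ for $b\ge 2$. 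Your route trades a bit more bookkeeping for a marginally sharper final polynomial. It is also worth noting that along your route the combination $b(b-2)S_p+2bS_{p-1}-b^{2}S_{p-2}$ telescopes \emph{exactly} to $b(b-2)\binom{u}{p}(-b)^{p}+b^{2}\binom{u}{p-1}(-b)^{p-1}$, so that $(-1)^uD=b^{p}(b-1)\bigl[(b-2)\binom{u-1}{p-1}-\binom{u-1}{p-2}\bigr]$ identically; one can then bypass \eqref{even_obs}/\eqref{odd_obs} altogether and finish by the single inequality $(b-2)(2L+1)\ge p-1$, which follows from $p\le u-2\le b-2$ and $L\ge 1$.
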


\begin{proof}
Let $u = m - k + 2L$ for $ L \ge 1$. Then using the fact that $u + m -k$ is even and equations \eqref{even_obs} and \eqref{odd_obs} we have,
\begin{align*}
\Gamma_{u,k} - \Gamma_{u+2,k} &= (-1)^u(b - 1)^{-u} \left [ (-1)^{m-k}b^{m-k}{u-1 \choose m-k}  - \sum_{j=0}^{m-k}{u \choose j}(-b)^j\right] \\
&\;\;\; - (-1)^{u+2}(b - 1)^{-u-2} \left [ (-1)^{m-k}b^{m-k}{u+1 \choose m-k}  - \sum_{j=0}^{m-k}{u + 2 \choose j}(-b)^j\right] \\
&= (b-1)^{u-2}\Bigg[ (b-1)^2b^{m-k}{u-1 \choose m-k} - b^{m-k}{u+1 \choose m-k}  \\
&\qquad\qquad\qquad + (b-1)^2(-1)^{u+1} \sum_{j=0}^{m-k}{u \choose j}(-b)^j + (-1)^u\sum_{j=0}^{m-k}{u + 2 \choose j}(-b)^j\Bigg]\\
&\le (b-1)^{u-2}\Bigg[ (b-1)^2b^{m-k}{u-1 \choose m-k} - b^{m-k}{u+1 \choose m-k}  \\
&\qquad\qquad\qquad + (b-1)^2\left[{u \choose m-k-1}b^{m-k-1} - {u \choose m-k)}b^{m-k}\right]\\
&\qquad\qquad\qquad  + {u+2 \choose m-k}b^{m-k}\Bigg].
\end{align*}
Multiplying both sides by $(b-1)^{u+2}b^{-(m-k-1)}$ and applying the Pascal's identity we have,
\begin{align*}
\frac{(b-1)^{u+2}}{b^{(m-k-1)}}\left( \Gamma_{u,k} - \Gamma_{u+2,k} \right) &\le b(b-1)^2{u-1 \choose m-k} - b{u+1\choose m-k} + (b-1)^2{u \choose m-k-1}\\
& \;\;\; - b(b-1)^2{u \choose m-k} + b{u+2 \choose m-k} \\
&= (b-1)^2{u \choose m-k-1} - b(b-1)^2 {u-1 \choose m-k-1} + b{u+1 \choose m-k-1}\\
&= {u-1 \choose m-k-1} \left[ (b-1)^2 \frac{u}{2L + 1} + \frac{bu(u+1)}{(2L+1)(2L+2)} - b(b-1)^2\right].
\end{align*}
Now note that 
\begin{align}
\label{bd1}
\frac{u}{2L+1} \le \frac{b}{2L+1} \le \frac{b}{3} \;\;\text{and}\;\; \frac{u(u+1)}{(2L+1)(2L+2)} \le \frac{b(b+1)}{12}.
\end{align}
Using \eqref{bd1} and writing 
$$C := \frac{(b-1)^{u+2}b^{-(m-k-1)}}{{u-1 \choose m-k-1}},$$
we have,
\begin{align*}
C\left( \Gamma_{u,k} - \Gamma_{u+2,k} \right) &\le \frac{b(b-1)^2}{3} + \frac{b^2(b+1)}{12} - b(b-1)^2\\
&= \frac{b}{12} \left( b(b+1) - 8(b-1)^2 \right).
\end{align*}
Now the function $f(b) = b(b+1) - 8(b-1)^2$ has roots $b = 0.638$ and $b = 1.79$. Thus for all $b \ge 2$, we have $f(b) \le 0$, which finishes the proof of the lemma. 
\end{proof}

\subsection{Proof of Lemma \ref{lem:lower_bound_gamma}}
\begin{proof}[Proof of Lemma \ref{lem:lower_bound_gamma}]
Following Lemma \ref{odd_growth} and \ref{even_growth}, it is easy to see that the minimum value of $\Gamma_{u,k}$ is obtained at $u = m-k+2$. Plugging this in \eqref{gamma_notation} we get,
\begin{align*}
\Gamma_{m-k+2,k} = \left(\frac{b}{b-1}\right)^{m-k} \left(1 - \frac{m-k}{b-1}\right).
\end{align*}
Thus, 
\begin{align}
\label{bd2}
\Gamma_{u,k} \ge \min\limits_{\substack{0 \le k \le m\\ m-k+2 \le s}}\left(\frac{b}{b-1}\right)^{m-k} \left(1 - \frac{m-k}{b-1}\right).
\end{align}
To prove the result, we consider the following function for $x \in [0,m]$,  
\begin{align*}
f(x) = \left(\frac{b}{b-1}\right)^{m-x} \left(1 - \frac{m-x}{b-1}\right),
\end{align*}
and prove that is an increasing function of $x$. This will be enough to show that at $k = \max\{0, m-s+2\}$ the right hand side of \eqref{bd2} attains it minimum and therefore the theorem will follow. Towards that end, we differentiate $f$ with respect to $x$ to get
\begin{align}
f'(x) = \left(\frac{b}{b-1}\right)^{m-x} \left[ \frac{1}{b-1} - \left(1 - \frac{m-x}{b-1}\right)\log\left(\frac{b}{b-1}\right)\right].
\end{align}
Observing that $x \le m$ and $\log x \le x - 1$ for $x > 0$, we have $f'(x) > 0$ for all $x \in [0,m]$. Thus, $f(x)$ is an increasing function of $x$ and the proof is complete.
\end{proof}

\section{Proof of Technical Lemmas}
\label{app:lemmas}

\subsection{Supporting lemma for Theorem \ref{thm:convergence}}
\label{app_thm}
\begin{lemma}
\label{lem:cor_lemma_3}
Under the notation of Section \ref{sec:mrotos} we have,
\begin{align}
\label{eq:up_bound_fpsi}
\left|\langle f, \psi_{u\kappa\tau\gamma} \rangle\right| \le C b^{-\frac{|\kappa|}{2}\left(1+ \frac{2}{d}\right) - \frac{|u|}{2}} \sum_{\eta \in S_u} \sup_{\bsw \in \cx_{(u,\kappa,\tau)}} |\partial^\eta f_u(\bsw)|,
\end{align}
where $S_u = \{1, \ldots, d\}^{|u|}$ and $\cx_{(u,\kappa,\tau)} = \prod_{j \in u} \cx_{(k_j, t_j)}^{(j)}$. Furthermore, we have,
\begin{align}
\label{eq:up_bound_siguk}
\sigma_{u,\kappa}^2 = \E(\nu_{u,\kappa}^2(X)) \le C b^{-2|\kappa|/d} \norm{h_u}_\infty^2,
\end{align}
where $h_u(\bsz) = \max_{\eta \in S_u} |\partial^\eta \tilde{f}_u(\bsz)|$ and $\tilde{f}_u$ is the extension of $f_u$ to the bounding box. Finally we have,
\begin{align}
\label{eq:fourth_moment}
\E(\nu_{u,\kappa}^4) = O(b^{-4|\kappa|/d}).
\end{align}
\end{lemma}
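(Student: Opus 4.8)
The three displays will be proved in sequence: \eqref{eq:up_bound_fpsi} carries essentially all the work, and \eqref{eq:up_bound_siguk}--\eqref{eq:fourth_moment} then follow from it by counting cells and a crude pointwise estimate. For \eqref{eq:up_bound_fpsi} the plan is to re-run the argument behind Lemma~\ref{lem:inn_prod_s}, being a little more careful so that the common prefactor $b^{-(|\kappa|+|u|)/2}$ is retained in the remainder as well as in the leading term. First I would use the orthogonality of the ANOVA decomposition: since $\psi_{u\kappa\tau\gamma}$ has mean zero in each coordinate $\bsx_j$ with $j\in u$ and is constant in the remaining coordinates, $\langle f,\psi_{u\kappa\tau\gamma}\rangle=\langle f_u,\psi_{u\kappa\tau\gamma}\rangle$, and the latter is an integral over the single wide cell $\cx_{(u,\kappa,\tau)}=\prod_{j\in u}\cx^{(j)}_{(k_j,t_j)}$. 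By Lemma~\ref{lem:fu_smooth} the ANOVA component $f_u$ is itself smooth, so I can Taylor-expand it one coordinate $j\in u$ at a time using Lemmas~\ref{rule1} and \ref{rule2}: each one-dimensional difference operator $b^{(k_j+1)/2}\int_{\cx^{(j)}_{(k_j,t_j,c_j)}}(\cdot)-b^{(k_j-1)/2}\int_{\cx^{(j)}_{(k_j,t_j)}}(\cdot)$ pulls out the scalar $b^{-(k_j+1)/2}$ and, by Lemma~\ref{rule2} together with the sphericity bound $\norm{\bsn_{c_j}-\bsw_j}\le Cb^{-k_j/d}$, leaves a main contribution of size $O(b^{-k_j/d})$ carrying one derivative $\partial/\partial x_{j,\eta_j}$ plus a remainder of strictly smaller order $O(b^{-k_j(1+\beta)/d})$ carrying no derivative in that coordinate.

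Multiplying over $j\in u$ (the operators commute, and mixed partials of the smooth $f_u$ may be interchanged) and expanding the product gives the leading term, bounded by $Cb^{-(|\kappa|+|u|)/2}b^{-|\kappa|/d}\sum_{\eta\in S_u}|\partial^\eta f_u(\bsw)|$, plus finitely many cross terms in which at least one coordinate contributes the remainder; each cross term still keeps the full $b^{-(|\kappa|+|u|)/2}$ prefactor and the $b^{-|\kappa|/d}$ from the other coordinates, gains an extra factor $b^{-\tilde k\beta/d}\le1$ (with $\tilde k=\min_{j\in u}k_j$ as in Lemma~\ref{lem:inn_prod_s}), and involves only lower-order derivatives or Lipschitz moduli of $f_u$, all finite by smoothness. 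Bounding $|\partial^\eta f_u(\bsw)|\le\sup_{\bsw'\in\cx_{(u,\kappa,\tau)}}|\partial^\eta f_u(\bsw')|$ and absorbing the bounded quantities into the constant yields \eqref{eq:up_bound_fpsi}. I expect this step to be the main obstacle: the delicate point is keeping the $b^{-|u|/2}$ attached to every remainder term and ensuring the remainders carry controlled derivatives of $f_u$ (not of $f$), which is exactly why Lemma~\ref{lem:fu_smooth} is needed.

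For \eqref{eq:up_bound_siguk} I would use that, for fixed $u$ and $\kappa$, the partial sums $\sum_\gamma\langle f,\psi_{u\kappa\tau\gamma}\rangle\psi_{u\kappa\tau\gamma}$ for distinct $\tau$ have essentially disjoint supports (distinct wide cells), so
\begin{align*}
\sigma_{u,\kappa}^2=\int_{\cx^\otos}\nu_{u,\kappa}^2\rd\bsx=\sum_{\tau}\int_{\cx^\otos}\Bigl(\sum_{\gamma}\langle f,\psi_{u\kappa\tau\gamma}\rangle\psi_{u\kappa\tau\gamma}\Bigr)^2\rd\bsx .
\end{align*}
For each $\tau$ the inner integral is a quadratic form in the $b^{|u|}$-vector $\bigl(\langle f,\psi_{u\kappa\tau\gamma}\rangle\bigr)_\gamma$ whose Gram matrix $\bigl(\langle\psi_{u\kappa\tau\gamma},\psi_{u\kappa\tau\gamma'}\rangle\bigr)_{\gamma,\gamma'}$ has entries bounded by $((b-1)/b)^{|u|}\le1$ and dimension $b^{|u|}\le b^s$, hence is at most $Cb^{2|u|}\max_\gamma|\langle f,\psi_{u\kappa\tau\gamma}\rangle|^2$. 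Summing over the $b^{|\kappa|}$ values of $\tau$, inserting \eqref{eq:up_bound_fpsi}, and using $\sum_{\eta\in S_u}\sup_{\cx_{(u,\kappa,\tau)}}|\partial^\eta f_u|\le d^{|u|}\norm{h_u}_\infty$ (here passing to the extension $\tilde f_u$ on the bounding box supplies a uniform derivative bound regardless of where the cell centres $\bsw$ land) gives
\begin{align*}
\sigma_{u,\kappa}^2\le C\,b^{|\kappa|}\cdot b^{2|u|}\cdot b^{-|\kappa|(1+2/d)-|u|}\,\norm{h_u}_\infty^2=C\,b^{|u|}\,b^{-2|\kappa|/d}\,\norm{h_u}_\infty^2\le C\,b^{-2|\kappa|/d}\,\norm{h_u}_\infty^2 ,
\end{align*}
since $|u|\le s$; this is \eqref{eq:up_bound_siguk}.

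Finally, \eqref{eq:fourth_moment} follows from a pointwise bound on $\nu_{u,\kappa}$. At any $\bsx$ only the term $\tau'=\tau$ (the narrow cell containing $\bsx$) survives in $\nu_{u,\kappa}(\bsx)=\sum_{\tau',\gamma'}\langle f,\psi_{u\kappa\tau'\gamma'}\rangle\psi_{u\kappa\tau'\gamma'}(\bsx)$, leaving $b^{|u|}$ summands, each with $|\psi_{u\kappa\tau\gamma'}(\bsx)|\le\prod_{j\in u}b^{(k_j+1)/2}=b^{(|\kappa|+|u|)/2}$ and $|\langle f,\psi_{u\kappa\tau\gamma'}\rangle|\le Cb^{-\frac{|\kappa|}{2}(1+2/d)-|u|/2}\norm{h_u}_\infty$ by \eqref{eq:up_bound_fpsi}; the $b$-exponents cancel down to $-|\kappa|/d$, giving $\norm{\nu_{u,\kappa}}_\infty\le Cb^{-|\kappa|/d}$ with $C$ depending only on $f,d,b,s$. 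Then
\begin{align*}
\E(\nu_{u,\kappa}^4)\le\norm{\nu_{u,\kappa}}_\infty^2\,\E(\nu_{u,\kappa}^2)=\norm{\nu_{u,\kappa}}_\infty^2\,\sigma_{u,\kappa}^2\le C\,b^{-2|\kappa|/d}\cdot C\,b^{-2|\kappa|/d}=O(b^{-4|\kappa|/d})
\end{align*}
by \eqref{eq:up_bound_siguk}, which is \eqref{eq:fourth_moment}.
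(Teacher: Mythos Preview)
Your argument is correct. For \eqref{eq:up_bound_fpsi} and \eqref{eq:up_bound_siguk} you are doing exactly what the paper does, only more explicitly: the paper simply cites Lemma~\ref{lem:inn_prod_s} together with the sphericity bound $\norm{\bsn_{c_j}-\bsw_j}\le Cb^{-k_j/d}$ for \eqref{eq:up_bound_fpsi}, and defers to Lemma~5 of \cite{basu2015scrambled} for \eqref{eq:up_bound_siguk}. Your careful tracking of the $b^{-|u|/2}$ factor in the remainder is unnecessary in the end, since $|u|\le s$ is bounded and can be absorbed into $C$, but it does no harm.

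For \eqref{eq:fourth_moment} you take a genuinely different route. The paper expands the fourth moment directly,
\[
\E(\nu_{u,\kappa}^4)=\sum_{\tau}\sum_{\gamma_1,\dots,\gamma_4}\Bigl(\prod_{i=1}^4\langle f,\psi_{u\kappa\tau\gamma_i}\rangle\Bigr)\int_{\cx^u}\prod_{i=1}^4\psi_{u\kappa\tau\gamma_i}(\bsx)\rd\bsx,
\]
bounds the four-fold product of basis functions by $O(b^{|\kappa|})$, inserts \eqref{eq:up_bound_fpsi} into the product of inner products, and counts the $b^{|\kappa|}\cdot b^{4|u|}$ terms. You instead pass through the pointwise bound $\norm{\nu_{u,\kappa}}_\infty\le Cb^{-|\kappa|/d}$ and interpolate via $\E(\nu^4)\le\norm{\nu}_\infty^2\,\sigma_{u,\kappa}^2$. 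Your version is shorter and reuses \eqref{eq:up_bound_siguk} rather than re-doing the combinatorics; the paper's direct expansion, on the other hand, would generalize immediately to any fixed moment $\E(\nu_{u,\kappa}^{2p})$ without first proving an $L^\infty$ bound. Both are perfectly valid here.
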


\begin{proof}
The proof of \eqref{eq:up_bound_fpsi} easily follows from Lemma \ref{lem:inn_prod_s} and the sphericity constraint on the domain $\cx$. To prove \eqref{eq:up_bound_siguk} we refer to Lemma 5 of \cite{basu2015scrambled}. Now note that
\begin{align}
\label{eq:fourth_mom1}
\E(\nu_{u,\kappa}^4(X)) = \sum_{\tau} \sum_{\gamma_1, \gamma_2, \gamma_3, \gamma_4} \left(\prod_{i=1}^4 \langle f, \psi_{u\kappa\tau\gamma_i} \rangle \right) \int_{\cx^{u}} \prod_{i=1}^4
\psi_{u\kappa\tau\gamma_i}(\bsx) \rd \bsx,
\end{align}
and
\begin{align}
\label{eq:fourth_mom2}
 \int_{\cx^{u}} \prod_{i=1}^4
\psi_{u\kappa\tau\gamma_i}(\bsx) \rd \bsx &= \prod_{j \in u} \int  \prod_{i=1}^4 b^{(k_j - 1)/2} \left(b N_{j,k_j,t_j, c_{ji}}(x_j) - W_{k_j, t_j}(x_j)\right) \rd x_j \nonumber \\
&= \prod_{j \in u} O(b^{k_j}) = O(b^{|\kappa|}).
\end{align}
Thus, the proof of \eqref{eq:fourth_moment} now follows from \eqref{eq:up_bound_fpsi}, \eqref{eq:fourth_mom1} and \eqref{eq:fourth_mom2}. 
\end{proof}

\subsection{Proof of supporting lemmas for Theorem \ref{thm:lower-bound}}
\label{app_lb_sig}
\begin{proof}[Proof of Lemma \ref{rule1}]
By the multivariate Taylor expansion we have,
\begin{align*}
f(\bsx) &= f(\bsx^*) + \int_0^1 \langle \nabla f(\bsx^* + t(\bsx - \bsx^*)), \bsx - \bsx^* \rangle \rd t\\
&= f(\bsx^*) + \langle \nabla f(\bsx^*), \bsx - \bsx^*\rangle + \int_0^1 \langle \nabla f(\bsx^* + t(\bsx - \bsx^*)) - \nabla f(\bsx^*), \bsx - \bsx^* \rangle \rd t.
\end{align*}
Now, using Cauchy-Schwarz inequality we get,
\begin{align*}
\Bigg|\int_0^1 \langle \nabla f(\bsx^* + &t(\bsx - \bsx^*)) - \nabla f(\bsx^*), \bsx - \bsx^* \rangle \rd t\Bigg| \\
&\le \int_0^1 \left|\langle \nabla f(\bsx^* + t(\bsx - \bsx^*)) - \nabla f(\bsx^*), \bsx - \bsx^* \rangle\right| \rd t \\
&\le  \int_0^1 \norm{\nabla f(\bsx^* + t(\bsx - \bsx^*)) - \nabla f(\bsx^*)} \norm{ \bsx - \bsx^* } \rd t\\
&\le \int_0^1 B \norm{t(\bsx - \bsx^*)}^\beta \norm{ \bsx - \bsx^* } \rd t\\
&= B(1 + \beta)^{-1}\norm{\bsx - \bsx^*}^{1+ \beta}.
\end{align*}
\end{proof}

\begin{proof}[Proof of Lemma \ref{rule2}]
Taking $\bsx^* = \bsn_{ktc}$ in Lemma \ref{rule1} we get,
\begin{align*}
b^{k+1}\int_{\cx_{(k,t,c)}} f(\bsx) \rd \bsx &= f(\bsn_{ktc}) + b^{k+1}\int_{\cx_{(k,t,c)}} \langle \nabla f(\bsn_{ktc}), \bsx - \bsn_{ktc}\rangle \rd \bsx \\
&\;\;\;+ b^{k+1}\int_{\cx_{(k,t,c)}}C \norm{ \bsx - \bsn_{ktc}}^{1+\beta} \rd \bsx \\ 
& = f(\bsn_{ktc}) + O(b^{-k(1+\beta)/d}),
\end{align*}
where the last equality follows from the fact that $$b^{k+1}\int_{\cx_{(k,t,c)}} \langle \nabla f(\bsn_{ktc}), \bsx - \bsn_{ktc}\rangle \rd \bsx =0.$$
Applying the same with $\bsw_{kt}$, taking the difference and using Lemma \ref{rule1} we have,
\begin{align*}
b^{k+1}\int_{\cx_{(k,t,c)}} f(\bsx) \rd \bsx - b^{k}\int_{\cx_{(k,t)}} f(\bsx) \rd \bsx 
&= f(\bsn_{ktc}) - f(\bsw_{kt}) + O(b^{-k(1+\beta)/d})\\
&=  \langle \bsn_{ktc} - \bsw_{kt}, \nabla f(\bsw_{kt})\rangle + O(b^{-k(1+\beta)/d}).
\end{align*}
\end{proof}

\begin{proof}[Proof of Lemma \ref{lem:fu_smooth}]
We prove this by induction on $|u|$. Let $|u| = 0$, that is $u = \emptyset$. 
Then $f_u(\bsx) = \int_{\cx^\otos} f(\bsx) \rd \bsx$ which is a constant $\mu$ 
and is therefore smooth on $\cx^\otos$. 
Let us suppose that the hypothesis holds for $|u| = k - 1<s$ and we shall prove it for $|u| = k$. 

Fix any $u \subseteq\otos$ such that $|u| = k$. By \eqref{eq:anovau} we have,
 \[f_u(\bsx) = \int_{\cx^{-u}} f(\bsx) \rd \bsx_{-u} - \sum_{w \subset u} f_w(\bsx).\]
Note that $f_u$ only depends only on $\bsx_u$. Thus for any $\bsx_u, \bsx_u^* \in \cx^{u}$ and any $v \subseteq u$, using Leibniz's integral rule and the induction hypothesis we have,
\begin{align*}
\norm{\nabla^v f_u(\bsx_u) - \nabla^v f_u(\bsx_u^*)} &\le \norm{ \nabla^v  \int_{\cx^{-u}} f(\bsx_u, \bsx_{-u}) \rd \bsx_{-u} - \nabla^v  \int_{\cx^{-u}} f(\bsx_u^*, \bsx_{-u}) \rd \bsx_{-u} } \\
&\;\;\; + \sum_{w \subset u} \norm{\nabla^v f_w(\bsx_u) - \nabla^v f_w(\bsx_u^*)} \\
& \le  \int_{\cx^{-u}} \norm{\nabla^v f(\bsx_u, \bsx_{-u}) - \nabla^v f(\bsx_u^*,\bsx_{-u})} \rd \bsx_{-u} \\
&\;\;\; + \sum_{w \subset u} \norm{\nabla^v f_w(\bsx_u) - \nabla^v f_w(\bsx_u^*)} \\
&\le \tilde{B}\norm{\bsx_u - \bsx_u^*}^\beta.
\end{align*}
Thus the induction hypothesis holds for $|u|=k$, and hence completing the proof.
\end{proof}

\begin{proof}[Proof of Lemma \ref{lem:inn_prod_s}]
Note that from the definition we have,
\begin{align*}
\langle f, \psi_{u\kappa\tau\gamma} \rangle &= \langle f_u, \psi_{u\kappa\tau\gamma} \rangle\\
&= b^{-(|\kappa| + |u|)/2} \int_{\cx^u} f_u(\bsx) \prod_{j \in u} b^{k_j}\left(b N_{jk_jt_jc_j}(x_j) - W_{jk_jt_j}(x_j)\right) \rd \bsx_u,
\end{align*}
where $N_{jk_jt_jc_j}, W_{jk_jt_j}$ are indicator functions of $\cx_{(k_j,t_j,c_j)}^{(j)}, \cx_{(k_j,t_j)}^{(j)}$ respectively. 
Since $f_u$ is smooth by Lemma \ref{lem:fu_smooth}, we apply Lemma \ref{rule2}, $|u|$ times to get,
 \begin{align*}
\langle f, \psi_{u\kappa\tau\gamma} \rangle &=  b^{-(|\kappa| + |u|)/2} \sum_{i_1=1}^d \sum_{i_2 = 1}^d \ldots \sum_{i_{|u|} = 1}^d \left(\prod_{j \in u}(\bsn_{c_j} - \bsw_j)_{i_j}\right) \frac{\partial^{|u|}f_u (\bsw_1, \ldots, \bsw_{|u|})}{\prod_{j \in u} \partial x_{j,i_j}}  \\
&\;\;\; +  b^{-(|\kappa| + |u|)/2}  O(b^{-|\kappa|/d - \tilde{k}\beta/d}) \\
&= b^{-(|\kappa| + |u|)/2} \sum_{i_1, i_2, \ldots, i_{|u|}=1}^d  \left(\prod_{j \in u}(\bsn_{c_j} - \bsw_j)_{i_j}\right) \frac{\partial^{|u|}f_u (\bsw_1, \ldots, \bsw_{|u|})}{\prod_{j \in u} \partial x_{j,i_j}} \\
&\;\;\; +  O\left(b^{-\frac{|\kappa|}{2}\left(1+\frac{2}{d}\right) - \frac{\tilde{k}\beta}{d}}\right)\\
& = b^{-(|\kappa| + |u|)/2} \sum_{ \eta \in S_u} \left(\prod_{j \in u}(\bsn_{c_j} - \bsw_j)_{\eta_j}\right) \partial^{\eta} f_u( \bsw) + O\left(b^{-\frac{|\kappa|}{2}\left(1+\frac{2}{d}\right) - \frac{\tilde{k}\beta}{d}}\right).
\end{align*}
Note that the order is retained only for the last integral since for every other order term, integration with respect to $\psi_{ktc}$ is zero. 
\end{proof}

\begin{proof}[Proof of Lemma \ref{sigmauk}]
Let $u \subseteq \{1, \ldots, s \}$. From the definition we have,
\begin{align*}
\sigma_{u,\kappa}^2 &= \int_{\cx^{u}} \nu_k^2(\bsx) \rd \bsx \\
&= \sum_{\tau}\sum_{\gamma,\gamma'} \langle f, \psi_{u\kappa\tau\gamma} \rangle \langle f, \psi_{u\kappa\tau\gamma'} \rangle \int_{\cx^u} \psi_{u\kappa\tau\gamma}(\bsx)  \psi_{u\kappa\tau\gamma'} (\bsx) \rd \bsx_u \\
&= \sum_{\tau}\sum_{\gamma,\gamma'} \langle f, \psi_{u\kappa\tau\gamma} \rangle \langle f, \psi_{u\kappa\tau\gamma'} \rangle \prod_{j \in u} \left(\mathds{1}_{c_j=c_j'} - \frac{1}{b}\right).
\end{align*}
Note that $S_u$ depends only on the cardinality of $u$. Thus we can write, $\sum_{\eta \in S_u}$ as $\sum_{i=1}^d \sum_{\eta \in S_{u \setminus u_1}}$ by separating out the first component of $u$. 
Now using Lemma \ref{lem:inn_prod_s} we have,
\begin{align*}
&b^{(|\kappa| + |u|)} \langle f, \psi_{u\kappa\tau\gamma} \rangle \langle f, \psi_{u\kappa\tau\gamma'} \rangle \\ 
&= \sum_{ \eta \in S_u} \sum_{ \eta' \in S_u} \left(\prod_{j \in u}(\bsn_{c_j} - \bsw_j)_{\eta_j} (\bsn_{c_j'} - \bsw_j)_{\eta_j'}\right) \partial^{\eta} f_u( \bsw) \partial^{\eta'} f_u( \bsw) + O\left(b^{-2|\kappa|/d - \tilde{k}\beta/d}\right)  \\
&= \sum_{i=1}^d \sum_{j=1}^d (\bsn_{c_{u_1}} - \bsw_{u_1})_{i} (\bsn_{c_{u_1}'} - \bsw_{u_1})_{j}\sum_{\eta \in S_{u \setminus u_1}} \sum_{\eta' \in S_{u \setminus u_1}} \left(\prod_{j \in u \setminus u_1}(\bsn_{c_j} - \bsw_j)_{\eta_j} (\bsn_{c_j'} - \bsw_j)_{\eta_j'}\right) \\
& \;\;\; \times \partial^{i : \eta} f_u( \bsw) \partial^{j : \eta'} f_u( \bsw)+ O\left(b^{-2|\kappa|/d - \tilde{k}\beta/d}\right). 
\end{align*}
Plugging this into equation for $\sigma_{u,\kappa}^2$ we have,
\begin{align*}
&b^{(|\kappa| + |u|)}\sigma_{u,\kappa}^2 \\
&= \sum_{\tau}\sum_{\gamma,\gamma'} \sum_{i=1}^d \sum_{j=1}^d (\bsn_{c_{u_1}} - \bsw_{u_1})_{i} (\bsn_{c_{u_1}'} - \bsw_{u_1})_{j} \left(\mathds{1}_{c_{u_1}=c_{u_1}'} - \frac{1}{b}\right)\\
&\;\;\; \times \sum_{\eta \in S_{u \setminus u_1}} \sum_{\eta' \in S_{u \setminus u_1}} \left(\prod_{j \in u \setminus u_1}(\bsn_{c_j} - \bsw_j)_{\eta_j} (\bsn_{c_j'} - \bsw_j)_{\eta_j'} \left(\mathds{1}_{c_j=c_j'} - \frac{1}{b}\right) \right) \\
&\;\;\; \times\partial^{i : \eta} f_u( \bsw) \partial^{j : \eta'} f_u( \bsw) +O\left(b^{|\kappa| -2|\kappa|/d - \tilde{k}\beta/d}\right) .
\end{align*}
Note that sum over $\gamma, \gamma'$ is the sum over $c_{u_1}, \ldots, c_{u_{|u|}}$ and $c_{u_1}', \ldots, c_{u_{|u|}}'$ respectively.  For notational simplicity let us define $\gamma_{-1}$ and $\gamma_{-1}'$ as the collection $c_{u_2}, \ldots, c_{u_{|u|}}$ and $c_{u_2}', \ldots, c_{u_{|u|}}'$ respectively, i.e. leaving out $c_{u_1}$ and $c_{u_1}'$. Further define $A_{u_1}$ as the $d\times d$ matrix whose $i,j$-th entry is given by
$$a_{ij} := \sum_{c_{u_1}, c_{u_1}'} (\bsn_{c_{u_1}} - \bsw_{u_1})_{i} (\bsn_{c_{u_1}'} - \bsw_{u_1})_{j} \left(\mathds{1}_{c_{u_1}=c_{u_1}'} - \frac{1}{b}\right),$$
and $A_{u \setminus u_1}$ be the $d^{|u| - 1} \times d^{|u| -1}$ matrix whose rows and columns are chronologically numbered by $\eta$ and $\eta' \in S_{u \setminus u_1}$ respectively and a typical entry is of the matrix is given by
\begin{align}
\label{matA}
\sum_{\gamma_{-1}, \gamma_{-1}' } \prod_{j \in u \setminus u_1}(\bsn_{c_j} - \bsw_j)_{\eta_j} (\bsn_{c_j'} - \bsw_j)_{\eta_j'} \left(\mathds{1}_{c_j=c_j'} - \frac{1}{b}\right).
\end{align}
Finally we write $\partial^\eta f$ for $\eta \in S_{u \setminus u_1}$ as the vector $\nabla^{u \setminus u_1} f$ having dimension $d^{|u|-1}$. Combining these, we have
\begin{align*}
b^{(|\kappa| + |u|)}\sigma_{u,\kappa}^2 &= \sum_{\tau} \sum_{i,j=1}^d a_{ij} ( \partial^i \nabla^{u \setminus u_1} f_u(\bsw) )^{T} A_{u \setminus u_1} ( \partial^j \nabla^{u \setminus u_1} f_u (\bsw) ) + O\left(b^{|\kappa| -2|\kappa|/d - \tilde{k}\beta/d}\right)  \\
&= \sum_{\tau} (\nabla^u f_u(\bsw))^T ( A_{u_1}  \otimes A_{u \setminus u_1} ) \nabla^{u} f_u (\bsw) + O\left(b^{|\kappa| -2|\kappa|/d - \tilde{k}\beta/d}\right). 
\end{align*}
Applying this recursively, we get,
\begin{align*}
b^{(|\kappa| + |u|)}\sigma_{u,\kappa}^2 &= \sum_{\tau} (\nabla^u f_u(\bsw))^T \left(\bigotimes_{j \in u} A_j \right) \nabla^{u} f_u (\bsw) + O\left(b^{|\kappa| -2|\kappa|/d - \tilde{k}\beta/d}\right). 
\end{align*}
\end{proof}

\section{Alternative argument on Triangles}
\label{app:lower_bound_triangles}
\subsection*{Proof of Lemma \ref{lem:sig_form}}

	From the proof of Lemma \ref{sigmauk}, it is easy to see that
	\begin{align*}
	\sigma_{k}^2 =\sum_{t = 0}^{b^k - 1}\sum_{c,c'}\langle f, \psi_{ktc}\rangle \langle f, \psi_{ktc'}\rangle (1_{c=c'} - b^{-1}). 
	\end{align*}
	Consider the matrix $A_{b\times b}= I_{b} - J_{b}/b$, where $I_b$ denotes the identity matrix of dimension $b$ and $J_b$ is the matrix of all ones. Let us define $\bsy_t$ as the column vector whose $i$-th entry is $\langle f, \psi_{kt{i-1}} \rangle$ for $i = 1, \ldots, b$. Under this transformation, it is easy to see that
	\begin{align}
	\label{newsigma_exp}
	\sigma_{k}^2 = \sum_{t = 0}^{b^k - 1} \bsy_t^{T}A\bsy_t.
	\end{align}
	Denote by $\bsv_b$ the vector of all ones, properly normalized. Then from the structure of $A$, we can see $Av_b = 0$. Furthermore let $\bsv_1, \ldots, \bsv_{b-1}$ be the set of orthonormal eigen vectors of $A$ which are orthogonal to the eigen space of 0. Note further that because of the structure of $A$, $A\bsv_\ell = \bsv_\ell$ for $\ell = 1,\ldots, b-1$.

	Thus $\bsv_1, \ldots, \bsv_b$ forms a orthonormal basis of $\mathbb{R}^b$ and we expand $\bsy_t$ along this basis to get,
	\begin{align}
	\label{bsy}
	\bsy_t = \sum_{\ell = 1}^{b-1}a_\ell^t\bsv_\ell + a_b^t\bsv_b.
	\end{align}   
	Plugging \eqref{bsy} in \eqref{newsigma_exp} we get,
	\begin{align}
	\label{sig_final}
	\sigma_{k}^2 &= \sum_{t = 0}^{b^k - 1} \bsy_t^{T}A\bsy_t = \sum_{t = 0}^{b^k - 1}\left(\sum_{\ell = 1}^{b-1}a_\ell^t\bsv_\ell \right)^T A \left(\sum_{\ell = 1}^{b-1}a_\ell^t\bsv_\ell\right) = \sum_{t = 0}^{b^k - 1}\sum_{\ell = 1}^{b-1}(a_\ell^t)^2
	\end{align}
	Let us pick one particular choice of eigen vectors. Define,
	$$\bsv_\ell := (1, \ldots, 1, -\ell, 0, \ldots, 0)/\sqrt{\ell(1 + \ell)} \;\; \text{ for } \;\;  \ell = 1, \ldots, b-1 ,$$ 
	where we have $\ell$ ones initially followed by $-\ell$ at position $\ell + 1$ and zeros for the remaining entries. It is easy to verify that $\bsv_1, \ldots, \bsv_{b-1}$ satisfies all the required properties. We derive a closed form expression for $a_\ell^t$ using these eigen vectors,
	\begin{align*}
	a_\ell^t &= \langle \bsy_t, \bsv_\ell \rangle \\
	&= \frac{1}{\sqrt{\ell(1 + \ell)}} \left(\sum_{i=1}^\ell \langle f, \psi_{kti-1}\rangle - \ell \langle f, \psi_{kt\ell}\rangle \right).
	\end{align*}
	Using $\langle f, \psi_{ktc}\rangle = b^{-(k+1)/2} \int f(\bsx) b^k(b N_{ktc} - \cx_{(k,t)}) d\bsx$, we get,
	\begin{align*}
	a_\ell^t &= \frac{ b^{-(k+1)/2} }{\sqrt{\ell(1 + \ell)}} \left(b^{k+1}\sum_{i=1}^\ell  \int_{\cx_{(k,t,i-1)}} f(\bsx)d\bsx  - \ell b^{k+1}\int_{\cx_{(k,t,\ell)}}  f(\bsx)d\bsx   \right)\\
	&= \frac{ b^{(k+1)/2} }{\sqrt{\ell(1 + \ell)}} \sum_{i=1}^\ell \left( \int_{\cx_{(k,t,i-1)}} f(\bsx)d\bsx  - \int_{\cx_{(k,t,\ell)}}  f(\bsx)d\bsx   \right).
	\end{align*}
	Plugging this into \eqref{sig_final} we have
	\[
	\sigma_{k}^2 =  \sum_{t = 1}^{b^k - 1} \sum_{\ell = 1}^{b-1} \frac{b^{k + 1}}{\ell(\ell+1)} \left[ \sum_{i = 1}^\ell \left( \int_{\cx_{(k,t,i-1)}} f(\bsx) d\bsx - \int_{\cx_{(k,t,\ell)}} f(\bsx) d\bsx \right)\right]^2.
	\]
$\hfill\square$

\end{document}